\documentclass{amsart}
\usepackage{graphicx} % Required for inserting images
\usepackage{amssymb,amsmath,amsfonts,amsthm}
\usepackage[disable]{todonotes}
\usepackage{bm}
\usepackage{hyperref}
\usepackage{pgfplots,pgfplotstable}
\usepackage{multirow}
\usepackage{tikz}
\usepackage{nicefrac}
\usetikzlibrary{matrix}
\usepgfplotslibrary{groupplots}
\usepackage[margin=1.5in]{geometry}

% Import macros
\usepackage{macros}

\begin{document}

% -----------------------------------------------------------
% Front matter
% -----------------------------------------------------------
\title[Iterative Methods for the Boltzmann Transport Equation]{Analysis of Iterative Methods for the Linear Boltzmann Transport Equation}
\author[P. Houston]{Paul Houston}
\address[P. Houston]{School of Mathematical Sciences, University of Nottingham,
University Park, Nottingham NG7 2RD, UK}
\email[Corresponding author]{paul.houston@nottingham.ac.uk}
\author[M. E. Hubbard]{Matthew E. Hubbard}
\address[M. E. Hubbard]{School of Mathematical Sciences, University of Nottingham,
University Park, Nottingham NG7 2RD, UK}
\email{matthew.hubbard@nottingham.ac.uk}
\author[T. J. Radley]{Thomas J. Radley}
\address[T. J. Radley]{School of Mathematical Sciences, University of Nottingham,
University Park, Nottingham NG7 2RD, UK}
\email{thomas.radley1@nottingham.ac.uk}

    \begin{abstract}
        In this article we consider the iterative solution of the linear system of equations arising from the discretisation of the poly-energetic linear Boltzmann transport equation using a discontinuous Galerkin finite element approximation in space, angle, and energy. In particular, we develop preconditioned Richardson iterations which may be understood as generalisations of source iteration in the mono-energetic setting, and derive computable \emph{a posteriori} bounds for the solver error incurred due to inexact linear algebra, measured in a relevant problem-specific norm. We prove that the convergence of the resulting schemes and \emph{a posteriori} solver error estimates are independent of the discretisation parameters. We also discuss how the poly-energetic Richardson iteration may be employed as a preconditioner for the generalised minimal residual (GMRES) method. Furthermore, we show that standard implementations of GMRES based on minimising the Euclidean norm of the residual vector can be utilized to yield computable \emph{a posteriori} solver error estimates at each iteration, through judicious selections of left- and right-preconditioners for the original linear system. The effectiveness of poly-energetic source iteration and preconditioned GMRES, as well as their respective \emph{a posteriori} solver error estimates, is demonstrated through numerical examples arising in the modelling of photon transport. 

    	\textbf{Keywords:} discontinuous Galerkin methods; linear Boltzmann transport equation; iterative solution; algebraic error; error estimates.

		\textbf{Mathematics Subject Classification (2020):} 65N15, 65N22, 65N30.
	\end{abstract}
	
	\maketitle

\section{Introduction}

The linear Boltzmann transport equation models the scattering, absorption and streaming of radiative particles through a medium and is widely used in many applications including radiotherapy treatment planning \cite{bakkali2017validation}, the design of nuclear reactors \cite{lewis1984computational} and climate sciences \cite{evans1998spherical}. We consider the stationary form of the equation whose solution, referred to as the \emph{angular flux}, models the (mean) equilibrium distribution of the particle species under consideration; this is dependent on up to six independent variables.

The high dimensionality of the underlying problem has led to the development of a variety of different discretisation approaches for the spatial, angular and energetic components of the problem. For energy, the multigroup method has historically been the predominant discretisation scheme \cite{lewis1984computational}, whereby a finite number of non-overlapping energy groups are employed and the energetic dependence of the angular flux is approximated by a piecewise-constant (or known) function. Extensions to higher-order approximations in energy include the generalised multigroup method \cite{zhu2010discrete,gibson2014stability} and the discontinuous Galerkin method outlined in our recent article \cite{houston2023efficient}. Importantly, for problems in which particles always lose energy during a scattering event, either by local energy deposition or by production of secondary particles, multigroup-based energetic discretisations permit an ordered solver strategy for the angular flux in each energy group in order of decreasing energy. In terms of the angular discretisation, popular approaches include spherical harmonics methods \cite{fletcher1983solution,lewis1984computational,stacey2018nuclear}, discrete ordinates methods \cite{lewis1984computational,stacey2018nuclear}, and mesh-based approximations constructed using, for example, continuous finite elements \cite{gao2009fast}, discontinuous finite elements \cite{hall2017hp,houston2023efficient,kophazi2015space} or wavelets \cite{adigun2018haar}. Spatially, typically finite element, finite volume, finite difference, and characteristic-based methods are employed.

When the stationary linear Boltzmann transport equation is approximated using a discrete ordinates or mesh-based method in angle and a multigroup method in energy, the resulting linear system is typically very large and thus one must resort to iterative solution methods. Assuming that the physics of the problem is such that the approximate angular flux can be computed sequentially in order of descending energy groups, one needs only to solve a set of discretised mono-energetic problems. The literature for this is vast; see \cite{adams2002fast}, for example, and the references therein. Among the most common iterative methods for mono-energetic problems is \emph{source iteration}, which exploits the fact that the system matrix can be decoupled as the sum of a scattering term and an easily-invertible transport operator. The convergence rate of source iteration is characterised by the \emph{scattering ratio} $c\in[0,1)$ which may be determined from problem data. For highly-scattering media where $c\approx1$, the convergence of source iteration can be very slow.

To address this issue, a number of acceleration techniques have been proposed. Among the most common technique is diffusion-synthetic acceleration (DSA) \cite{alcouffe1977diffusion,warsa2002fully}, in which each source iteration is interspersed with a diffusion problem whose solution is added as a correction to the current angular flux iterate. While DSA is provably and rapidly convergent independently of $c$ for some discretised model problems, it is known that the choice of discretisation of the diffusion problem must be consistent with that of the transport problem \cite{warsa2002fully}. Furthermore, the DSA preconditioner may require special modification to deal with highly heterogeneous media \cite{southworth2021diffusion}.

In recent years, purely algebraic techniques based on Krylov subspace methods have been studied. The generalised minimal residual method (GMRES) \cite{saad1986gmres} and the biconjugate gradient stabilised method (BI-CGSTAB) \cite{van1992bi} have both been employed in radiation transport applications. In \cite{patton2002application}, applications of GMRES employing preconditioners based on incomplete LU factorisations and transport sweeps were shown to be competitive against DSA. In \cite{badri2019preconditioned}, GMRES and BI-CGSTAB are employed with Jacobi and block Jacobi preconditioning. In \cite{warsa2004krylov}, it was shown that GMRES, employing a preconditioner based on a partially-consistent DSA scheme (which may not otherwise be convergent as a stationary iterative method), can be more effective than fully-consistent DSA schemes in multidimensional geometries with highly scattering regions.

The original GMRES algorithm in \cite{saad1986gmres} seeks to successively minimise the $\ell_2$-norm of a residual vector associated with a (finite-dimensional) system of linear equations, which is returned alongside the approximate solution upon termination of the algorithm. Generalisations of the GMRES method to Hilbert space settings have been made in \cite{gasparo2008some,gunnel2014note}, and in the finite-dimensional case the authors of \cite{chen1999generalizations} present a modification of GMRES in which the vector norm used in the residual minimisation condition is induced by a positive-definite matrix. This last point is significant since, in finite element applications, the vector solution of the underlying system of equations is commonly the set of coefficients in the expansion of a finite element basis - in this setting, the $\ell_2$-norm of the residual vector may not be a useful measure of the error between the true finite element solution and its approximation obtained by inexact solution of the discrete equations \cite{wathen2007preconditioning}. The problem of obtaining meaningful measures of functional error incurred by premature termination of an iterative linear solver is especially important in the context of adaptive finite element methods; see \cite{arioli2013interplay} for a survey of the interplay between discretisation and algebraic error estimation.

In this article, we consider the numerical solution of the system of linear equations arising from a full space-angle-energy discretisation of the linear Boltzmann transport equation employing the discontinuous Galerkin finite element method (DGFEM) outlined in our previous article \cite{houston2023efficient}. Here, we generalise the source iteration method to the poly-energetic setting, including a poly-energetic analogue to the mono-energetic scattering ratio, and employ the resulting scheme as a preconditioner within a GMRES method. We also discuss a simple modification of mono-energetic source iteration that exhibits faster convergence rates at no extra cost. The convergence of the poly-energetic and generalised mono-energetic source iteration schemes is proven and computable \emph{a posteriori} solver error estimates are derived, which are guaranteed to bound an associated problem-dependent norm of the functional error in a mesh-independent manner. The practical implementation of these error bounds is also discussed; in particular, we show how residual-based \emph{a posteriori} solver error estimates may be incorporated into standard implementations of GMRES in such a way that the resulting method sequentially minimises the error estimate at each iteration. Numerical experiments illustrating the practical performance of each solver are presented: we observe that, using realistic photon scattering models, the convergence rates of poly-energetic source iteration and preconditioned GMRES, as well as their respective \emph{a posteriori} solver error estimates, are highly dependent on the energy range. In particular, GMRES is particularly effective in the low-energy photon scattering regime where the convergence of source iteration stagnates.

The outline of this paper is as follows. In Section \ref{section:model_problem}, we introduce the stationary linear Boltzmann transport equation. Then, in Section \ref{section:discretisation}, we summarise its high-order space-angle-energy DGFEM discretisation as presented in \cite{houston2023efficient}. In Section \ref{section:solver_analysis}, we introduce and discuss a number of iterative methods for the solution of the resulting linear system of equations and provide convergence proofs and \emph{a posteriori} solver error bounds. Section \ref{section:implementation} discusses the implementation of the proposed iterative methods and their corresponding error estimates. Section \ref{section:numerics} presents numerical experiments which demonstrate the performance of the proposed iterative methods for both mono-energetic and poly-energetic problems. Finally, in Section \ref{section:conclusions} we summarise the work presented in this article and highlight some areas of future work.

\section{Model Problem} \label{section:model_problem}

For $d=2,3$, let $\spacedom\subset\Re^d$ denote an open bounded Lipschitz spatial domain and $\angledom=\{\bm{\mu}\in\Re^d : |\bm{\mu}|_2 = 1\}$ denote the surface of the $d$-dimensional unit sphere, where $|\cdot|_2$ denotes the $\ell_2$-norm. Given $0\le E_{\min} < E_{\max} < \infty$, let $\energydom=\left[E_{\min},E_{\max}\right]$ and $\spaceangleenergydom=\spacedom\times\angledom\times\energydom$. The linear Boltzmann transport problem is given by: find $u:\spaceangleenergydom\rightarrow\Re$ satisfying
\begin{align}
    \bm{\mu}\cdot\nabla_\mathbf{x} u + (\alpha+\beta) u &= S[u] + f \textnormal{ in } \spaceangleenergydom, \label{eqn:lbte_exact} \\
    u &= g_D \textnormal{ on } \inflowbdry. \label{eqn:lbte_bc}
\end{align}

\noindent Here, $\nabla_\mathbf{x}$ denotes the spatial gradient operator, $\alpha$ and $\beta$ denote given data terms (described below), $f$ is a forcing term, $g_D$ is a specified (inlet) boundary condition, and $\inflowbdry = \{ (\mathbf{x},\bm{\mu},E) \in\overline{\spaceangleenergydom} : \mathbf{x}\in\partial\spacedom \textnormal{ and } \bm{\mu}\cdot\mathbf{n}(\mathbf{x}) < 0 \}$ denotes the inflow boundary of $\spaceangleenergydom$, where $\mathbf{n}$ is the outward unit normal vector to $\spacedom$ on $\partial\spacedom$. The operator \emph{scattering operator} $S[u]$ is defined by
\begin{equation} \label{eqn:scatter_operator}
    S[u](\mathbf{x},\bm{\mu},E) = \int_\energydom \int_\angledom \theta(\mathbf{x},\bm{\mu}'\cdot\bm{\mu}, E'\rightarrow E) u(\mathbf{x},\bm{\mu}',E') \d \bm{\mu}' \d E',
\end{equation}

\noindent where $\theta(\mathbf{x},\bm{\mu}'\cdot\bm{\mu},E'\rightarrow E)$ is a non-negative scattering kernel satisfying $\theta(\mathbf{x},\bm{\mu}'\cdot\bm{\mu},E'\rightarrow E)=0$ for $E'<E$.
The \emph{macroscopic scattering cross-section} $\beta$ is given by
\begin{equation} \label{eqn:macro_scatter_cs}
    \beta(\mathbf{x},E) = \int_\energydom \int_\angledom \theta(\mathbf{x},\bm{\mu}\cdot\bm{\mu}',E\rightarrow E') \d \bm{\mu}' \d E'.
\end{equation}

\noindent Here, $\alpha$, referred to as the \emph{macroscopic absorption cross-section}, is assumed to be non-negative and independent of the angular variable; i.e., $\alpha(\mathbf{x},\bm{\mu},E) \equiv \alpha(\mathbf{x},E)$. We remark that such assumptions are valid for isotropic media. Finally, we assume that there exists a constant $\alpha_0>0$ such that
\begin{equation} \label{eqn:alphabar_defn}
    \overline{\alpha}(\mathbf{x},E) := \alpha(\mathbf{x},E) + \nicefrac{1}{2}\left( \beta(\mathbf{x},E)-\gamma(\mathbf{x},E) \right) \ge \alpha_0,
\end{equation}

\noindent where
\begin{equation} \label{eqn:macro_inverse_scatter_cs}
    \gamma(\mathbf{x},E) = \int_\energydom \int_\angledom \theta(\mathbf{x},\bm{\mu}'\cdot\bm{\mu},E'\rightarrow E) \d\bm{\mu}' \d E'.
\end{equation}

\noindent We remark that, in applications to photon transport, one typically has $\gamma(\mathbf{x},E)\le\beta(\mathbf{x},E)$ \cite{radley2023discontinuous}, and so \reff{eqn:alphabar_defn} holds on a finite energy domain $0<E_{min}<E_{max}<\infty$ even when $\alpha=0$.

\section{Discretisation} \label{section:discretisation}

Following \cite{houston2023efficient}, we discretise the Boltzmann transport problem \eqref{eqn:lbte_exact}-\eqref{eqn:lbte_bc} using a DGFEM approach; we recall the key constructions below.

\subsection{Spatial discretisation}

Let $\spacemesh$ denote a subdivision of the spatial domain $\spacedom$ into open disjoint elements $\spacekappa$ such that $\overline{\spacedom}=\bigcup_{\spacekappa\in\spacemesh}\overline{\spacekappa}$. Given $p\ge0$, the spatial finite element space is defined by
\begin{equation}
    \spacefespace = \left\{ v\in L_2(\spacedom) : v|_{\spacekappa}\in\mathbb{P}_{p}(\spacekappa) \textnormal{ for all } \spacekappa\in\spacemesh \right\},
\end{equation}

\noindent where $\mathbb{P}_{p}(\spacekappa)$ denotes the space of polynomials of total degree $p$ on $\spacekappa$.

For a given element $\spacekappa$, the union of its $(d-1)$-dimensional open faces is denoted by $\partial\spacekappa$. For a given direction $\bm{\mu}\in\angledom$, the inflow and outflow boundaries of $\spacekappa$ are defined, respectively, by
\begin{align*}
    \partial_-\spacekappa &= \left\{ \mathbf{x}\in\partial\spacekappa : \bm{\mu}\cdot\mathbf{n}_{\spacekappa}(\mathbf{x}) < 0 \right\}, 
    \quad
    \partial_+\spacekappa &= \left\{ \mathbf{x}\in\partial\spacekappa : \bm{\mu}\cdot\mathbf{n}_{\spacekappa}(\mathbf{x}) \ge 0 \right\},
\end{align*}

\noindent where $\mathbf{n}_{\spacekappa}(\mathbf{x})$ denotes the outward unit vector to $\spacekappa$ at $\mathbf{x}\in\partial\spacekappa$.

Given a face $F\subset\partial_-\spacekappa\setminus\partial\spacedom$, $\spacekappa\in\spacemesh$, we define the upwind jump of a (sufficiently smooth) function $v$ across F by
$
    \lfloor v \rfloor = v^+ - v^-,
$
where $v^\pm$ denotes the interior, respectively, exterior trace of $v$ on $F$, relative to $\spacekappa\in\spacemesh$.

\subsection{Angular discretisation}

We shall employ the widely-used cube-sphere discretisation of the angular domain. To this end, let $\angledomdiscrete$ denote the closed polygonal/polyhedral surface of the $d$-dimensional cube in $\Re^d$, $d=2, 3$, respectively. We write $\tilde{\anglemesh} = \{ \tilde{\anglekappa} \}$ to denote a surface mesh consisting of tensor product elements $\tilde{\anglekappa}$, i.e., so that $\angledomdiscrete = \bigcup_{\tilde{\anglekappa}\in\tilde{\anglemesh}} \tilde{\anglekappa}$.
Letting $\phi_\angledom:\angledomdiscrete\rightarrow\angledom$ denote the smooth invertible mapping defined for any $\tilde{\bm{\mu}}\in\angledomdiscrete$ by $\phi_\angledom(\tilde{\bm{\mu}}) = |\tilde{\bm{\mu}}|_2^{-1} \tilde{\bm{\mu}}$, we define a mesh $\anglemesh$ consisting of curved surface elements on $\angledom$ by
$
	\anglemesh = \{ \anglekappa = \phi_\angledom(\tilde{\anglekappa}) : \tilde{\anglekappa}\in\tilde{\anglemesh} \}
$. Then, given $q\ge0$, the angular finite element space is defined by
\begin{equation}
	\anglefespace = \left\{ v\in L_2(\angledom) : v|_{\anglekappa} = \tilde{v} \circ \phi_\angledom^{-1}(\anglekappa) \textnormal{ with } \tilde{v}\in\mathbb{Q}_{q}(\tilde{\anglekappa}) \textnormal{ for all } \anglekappa\in\anglemesh \right\},
\end{equation}

\noindent where $\mathbb{Q}_{q}(\tilde{\anglekappa})$ denotes the space of polynomials of degree at most $q$ in each of the $(d-1)$ coordinate directions parametrising $\tilde{\anglekappa}$.

\subsection{Energetic discretisation}

For $N_{\energydom}\ge1$, let $E_{\max}:= E_0 > E_1 > \dots > E_{N_\energydom} =: E_{\min}$ denote a partition of the energy domain $\energydom$ into $N_{\energydom}$ \emph{energy groups}. The interval $\kappa_g=(E_g,E_{g-1})$, $1\le g\le N_{\energydom}$, is referred to as the $g^{th}$ energy group, and we define by $\energymesh=\left\{ \kappa_g \right\}_{g=1}^{N_{\energydom}}$ to be the energetic mesh. Given $r\ge0$, the energetic finite element space is defined by
\begin{equation}
    \energyfespace = \left\{ v\in L_2(\energydom) : v|_{\kappa_g} \in \mathbb{P}_{r}(\kappa_g) \textnormal{ for all } \kappa_g\in\energymesh \right\}.
\end{equation}

\subsection{Discontinuous Galerkin method}

Using the definitions of $\spacemesh$, $\anglemesh$, and $\energymesh$ outlined in the previous sections, we define the full space-angle-energy mesh $\mesh$ by
\begin{equation}
    \mesh = \spacemesh \times \anglemesh \times \energymesh = \left\{ \kappa = \spacekappa\times\anglekappa\times\kappa_g : \spacekappa\in\spacemesh, \anglekappa\in\anglemesh, \kappa_g\in\energymesh \right\}.
\end{equation}

\noindent We combine the separate finite element spaces to define the space-angle-energy finite element space 
$
    \fespace = \spacefespace \otimes \anglefespace \otimes \energyfespace.
$
We also define the broken graph space $\brokengraphspace$ by
\begin{equation}
    \brokengraphspace = \left\{ v\in L_2(\spaceangleenergydom) : \bm{\mu}\cdot\nabla_\mathbf{x} v|_{\kappa} \in L_2(\kappa) \textnormal{ for all } \kappa\in\mesh \right\}.
\end{equation}

The DGFEM scheme reads as follows: find $u_h\in\fespace$ such that
\begin{equation} \label{eqn:dgfem_lbte_full}
    a(u_h,v_h) = s(u_h,v_h) + \ell(v_h)
\end{equation}

\noindent for all $v_h\in\fespace$, where $a:\brokengraphspace\times\brokengraphspace\rightarrow\Re$, $s:\brokengraphspace\times\brokengraphspace\rightarrow\Re$ and $\ell:\brokengraphspace\rightarrow\Re$ are defined, respectively, for all $w,v\in\brokengraphspace$, by
\begin{align*}
    a(w,v) &= \int_\energydom \int_\angledom \Bigg[ \sum_{\spacekappa\in\spacemesh} \int_{\spacekappa} (-\bm{\mu} w\cdot\nabla_\mathbf{x} v + (\alpha+\beta)wv) \d\mathbf{x} \\
    &\quad\quad\quad\quad + \int_{\partial_+\spacekappa} |\bm{\mu}\cdot\mathbf{n}| w^+v^+ \d s 
     - \int_{\partial_-\spacekappa\setminus\partial\spacedom} |\bm{\mu}\cdot\mathbf{n}| w^-v^+ \d s \Bigg] \d\bm{\mu} \d E, \\
    s(w,v) &= \int_\energydom \int_\angledom \int_\spacedom S[w] v \d\mathbf{x} \d\bm{\mu} \d E, \\
    \ell(v) &= \int_\energydom \int_\angledom \left[ \sum_{\spacekappa\in\spacemesh} \int_{\spacekappa} fv \ \d\mathbf{x} + \int_{\partial_-\spacekappa\cap\partial\spacedom} |\bm{\mu}\cdot\mathbf{n}| g_D v^+ \d s \right] \d\bm{\mu} \d E.
\end{align*}

\section{Analysis of Iterative Solvers} \label{section:solver_analysis}

In this section we study the convergence of iterative solvers for the DGFEM scheme \reff{eqn:dgfem_lbte_full}. Following \cite{houston2023efficient}, we introduce the DGFEM-energy norm $|||\cdot|||_{DG}:\brokengraphspace\rightarrow\Re$ defined for all $v\in\brokengraphspace$ by
\begin{align*}
    ||| v |||_{DG}^2 &= || \sqrt{\overline{\alpha}} v ||_{L_2(\spaceangleenergydom)}^2 \nonumber \\
    &\quad\quad + \frac12 \int_\energydom \int_\angledom \sum_{\spacekappa\in\spacemesh} \left( || v^+-v^- ||_{\partial_-\spacekappa\setminus\partial\spacedom}^2 + || v^+ ||_{\partial\spacekappa\cap\partial\spacedom}^2 \right) \d\bm{\mu} \d E,
\end{align*}

\noindent where the (semi)norm $|| v ||_\omega^2 = \int_\omega |\bm{\mu}\cdot\mathbf{n}_{\spacekappa}| v^2 \d s$ for $\omega\subset\partial\spacekappa$. We recall the following theorem from \cite{houston2023efficient}.
\begin{theorem}\label{thm:DGFEM_bilin_form_coercivity}
    The DGFEM bilinear form $a(\cdot,\cdot)-s(\cdot,\cdot)$ is coercive with respect to the DGFEM-energy norm in the sense that
    \begin{equation}
        ||| v_h |||_{DG}^2 \le a(v_h,v_h) - s(v_h,v_h)
    \end{equation}

    \noindent for all $v_h\in\fespace$.
\end{theorem}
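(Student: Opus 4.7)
The plan is to exhibit a nonnegative decomposition of $a(v_h,v_h)-s(v_h,v_h)$ whose pieces match the three summands of $|||v_h|||_{DG}^2$ (the weighted $L_2$ mass, the interior jump terms, and the boundary trace terms). Since the angular and energy integrations commute with everything that happens spatially, I will fix $(\bm{\mu},E)$ and work element-by-element in space, integrating $\d\bm{\mu}\d E$ at the very end.

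The first step is elementwise integration by parts on the streaming term, writing
\[
-\int_{\spacekappa} \bm{\mu}\, v_h \cdot \nabla_{\mathbf{x}} v_h \,\d\mathbf{x} = -\tfrac{1}{2}\int_{\partial\spacekappa}\bm{\mu}\cdot\mathbf{n}_{\spacekappa}\,(v_h^+)^2\,\d s,
\]
and splitting $\partial\spacekappa=\partial_+\spacekappa\cup\partial_-\spacekappa$ so that this becomes $-\tfrac12\int_{\partial_+\spacekappa}|\bm{\mu}\cdot\mathbf{n}|(v_h^+)^2\,\d s + \tfrac12\int_{\partial_-\spacekappa}|\bm{\mu}\cdot\mathbf{n}|(v_h^+)^2\,\d s$. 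Combining with the two face integrals in the definition of $a(v_h,v_h)$ leaves per-element a sum of $\tfrac12|\bm{\mu}\cdot\mathbf{n}|(v_h^+)^2$ over $\partial\spacekappa$, minus the cross term $|\bm{\mu}\cdot\mathbf{n}|v_h^+v_h^-$ on $\partial_-\spacekappa\setminus\partial\spacedom$. Summing over $\spacekappa\in\spacemesh$, each interior face is visited twice and the three contributions combine, via the identity $(v_h^+)^2-2v_h^+v_h^-+(v_h^-)^2=\lfloor v_h\rfloor^2$, to $\tfrac12|\bm{\mu}\cdot\mathbf{n}|\lfloor v_h\rfloor^2$; the pieces on $\partial\spacekappa\cap\partial\spacedom$ assemble into $\tfrac12|\bm{\mu}\cdot\mathbf{n}|(v_h^+)^2$. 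After integrating over $\bm{\mu}$ and $E$, these are precisely the second and third terms of $|||v_h|||_{DG}^2$.

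The remaining volume contribution is $\int_{\spaceangleenergydom}(\alpha+\beta)v_h^2 - s(v_h,v_h)$, and the goal here is to show this dominates $\|\sqrt{\overline\alpha}\,v_h\|_{L_2(\spaceangleenergydom)}^2$. By definition \reff{eqn:alphabar_defn} this is equivalent to $s(v_h,v_h)\le \tfrac12\int_{\spaceangleenergydom}(\beta+\gamma)v_h^2$. I would apply Cauchy--Schwarz (equivalently AM--GM) pointwise in $\mathbf{x}$ to the symmetric-looking five-fold integral, writing $v_h(\mathbf{x},\bm{\mu}',E')v_h(\mathbf{x},\bm{\mu},E)\le\tfrac12[v_h(\mathbf{x},\bm{\mu}',E')^2+v_h(\mathbf{x},\bm{\mu},E)^2]$ under the non-negative weight $\theta(\mathbf{x},\bm{\mu}'\cdot\bm{\mu},E'\to E)$. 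Integrating out $(\bm{\mu},E)$ in the first half produces $\beta$ by \reff{eqn:macro_scatter_cs} (using that $\bm{\mu}'\cdot\bm{\mu}=\bm{\mu}\cdot\bm{\mu}'$), while integrating out $(\bm{\mu}',E')$ in the second half produces $\gamma$ by \reff{eqn:macro_inverse_scatter_cs}, giving the desired inequality.

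Adding the three nonnegative contributions yields $a(v_h,v_h)-s(v_h,v_h)\ge |||v_h|||_{DG}^2$. The only subtle point is bookkeeping: keeping track of which faces the integration-by-parts boundary terms cancel against, and being careful that the single sum $\sum_\spacekappa\|v^+-v^-\|_{\partial_-\spacekappa\setminus\partial\spacedom}^2$ in the norm corresponds to visiting each interior face once, while the streaming identity naturally produces each interior face twice; this is the place where errors in sign or counting are most likely to creep in, so I would write out the contribution from a single shared face $F\subset\partial_+\spacekappa_1\cap\partial_-\spacekappa_2$ explicitly before summing. The application of AM--GM to $s(v_h,v_h)$ is standard but does rely on the non-negativity of $\theta$ assumed in the model.
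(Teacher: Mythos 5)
Your proof is correct and is the standard argument for this result: elementwise integration by parts on the streaming term, reassembly of the face contributions into the upwind jump and boundary seminorms, and the AM--GM/Cauchy--Schwarz bound $s(v_h,v_h)\le\tfrac12\int_{\spaceangleenergydom}(\beta+\gamma)v_h^2\,\d\mathbf{x}\,\d\bm{\mu}\,\d E$ using the non-negativity of $\theta$ together with \reff{eqn:macro_scatter_cs} and \reff{eqn:macro_inverse_scatter_cs}. The paper itself does not reprove the theorem but recalls it from \cite{houston2023efficient}, where the proof proceeds along essentially the same lines, so there is nothing to add beyond your own (accurate) caution about the face-counting bookkeeping.
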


\begin{remark}
    The above coercivity result trivially holds for all $v\in\brokengraphspace$.
\end{remark}

For the purposes of simplifying the presentation of the forthcoming linear solvers, we endow $\fespace$ with an inner product $\langle\cdot,\cdot\rangle:\fespace\times\fespace\rightarrow\mathbb{R}$ and define the linear operators $A,S:\fespace\rightarrow\fespace$, for all $w_h\in\fespace$, respectively, by
\begin{align}
    \langle Aw_h,v_h \rangle &= a(w_h,v_h) \textnormal{ for all } v_h\in\fespace, \label{eqn:dgfem_transport_operator} \\
    \langle Sw_h,v_h \rangle &= s(w_h,v_h) \textnormal{ for all } v_h\in\fespace. \label{eqn:dgfem_scattering operator}
\end{align}

\noindent Furthermore, by defining $F\in\fespace$ by
\begin{equation} \label{eqn:dgfem_source_term_function}
    \langle F,v_h\rangle = \ell(v_h) \textnormal{ for all } v_h\in\fespace,
\end{equation}

\noindent the DGFEM \reff{eqn:dgfem_lbte_full} can be equivalently defined: find $u_h \in\fespace$ such that
\begin{equation} \label{eqn:dgfem_lbte_operator}
    (A-S)u_h = F.
\end{equation}

For the analysis of eigenvalues of linear operators on $\fespace$, we introduce a finite element space $\fespace_\mathbb{C}$ of complex-valued discontinuous piecewise-polynomial functions, which may be understood as the linear span of functions in $\fespace$ over the field $\mathbb{C}$. In such cases, the definitions of the bilinear forms $a$ and $s$, as well as the linear operators $A$ and $S$, are extended in a trivial way, and now $\langle\cdot,\cdot\rangle:\fespace_\mathbb{C}\times\fespace_\mathbb{C}\rightarrow\mathbb{C}$ denotes a sesquilinear form which is linear in the first argument and antilinear in the second argument.

\subsection{Source iteration}

Source iteration seeks to solve the following sequence of DGFEM problems: given $u_h^{(0)}\in\fespace$, find $\{u_h^{(n)}\}_{n=0}^\infty\subset\fespace$ such that
\begin{equation} \label{eqn:dgfem_si_operator}
    Au_h^{(n+1)} = Su_h^{(n)} + F,
\end{equation}

\noindent for $n\ge0$. We briefly remark that \reff{eqn:dgfem_si_operator} is well-posed and that $a(\cdot,\cdot)$ is coercive with respect to the norm $||| \cdot |||_{a}:\brokengraphspace\rightarrow\Re$ defined for all $v\in\brokengraphspace$ by
\begin{align}
    ||| v |||_{a}^2 &= || \sqrt{\alpha+\beta} v ||_{L_2(\spaceangleenergydom)}^2 \\
    &\quad\quad + \frac12 \int_\energydom \int_\angledom \left( \sum_{\spacekappa\in\spacemesh} || v^+-v^- ||_{\partial_-\spacekappa\setminus\partial\spacedom}^2 + || v^+ ||_{\partial\spacekappa\cap\partial\spacedom}^2 \right) \d\bm{\mu} \d E
\end{align}

\noindent in the sense that
\begin{equation} \label{eqn:dgfem_a_norm}
    ||| v |||_a^2 = a(v,v) \qquad \forall v\in\brokengraphspace.
\end{equation}

\begin{theorem} \label{thm:dgfem_si_contraction_apost}
    Let $u_h\in\fespace$ denote the exact solution to the DGFEM problem \reff{eqn:dgfem_lbte_full} and $\{u_h^{(n)}\}_{n=0}^\infty\subset\fespace$ denote the sequence of approximate solutions to $u_h$ defined by \reff{eqn:dgfem_si_operator}. Then we have that
    \begin{equation}
        ||| u_h - u_h^{(n+1)} |||_{a} \le \sqrt{q_\beta q_\gamma} \ ||| u_h-u_h^{(n)} |||_{a}
    \end{equation}

    \noindent for all $n\ge0$, where
    \begin{align*}
        q_\beta = \sup_{\mathbf{x}\in\spacedom,E\in\energydom} \ \frac{\beta(\mathbf{x},E)}{\alpha(\mathbf{x},E)+\beta(\mathbf{x},E)}, \qquad
        q_\gamma = \sup_{\mathbf{x}\in\spacedom,E\in\energydom} \ \frac{\gamma(\mathbf{x},E)}{\alpha(\mathbf{x},E)+\beta(\mathbf{x},E)}.
    \end{align*}

    \noindent Thus, we have the following \emph{a posteriori} solver error bound:
    \begin{equation}
        ||| u_h-u_h^{(n+1)} |||_{DG} \le \sqrt{r_\gamma} \ || \sqrt{\beta} (u_h^{(n)}-u_h^{(n+1)}) ||_{L_2(\spaceangleenergydom)}
    \end{equation}

    \noindent for all $n\ge0$, where
    \begin{equation*}
        r_\gamma = \sup_{\mathbf{x}\in\spacedom,E\in\energydom} \ \frac{\gamma(\mathbf{x},E)}{\overline{\alpha}(\mathbf{x},E)}.
    \end{equation*}
\end{theorem}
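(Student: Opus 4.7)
The plan is to let $e^{(n)} = u_h - u_h^{(n)}$ denote the iteration error. Subtracting \eqref{eqn:dgfem_si_operator} from the exact operator equation $A u_h = S u_h + F$ yields the clean recursion $A e^{(n+1)} = S e^{(n)}$. Combining this with the identity $a(v,v) = ||| v |||_a^2$ from \eqref{eqn:dgfem_a_norm} gives
\[
||| e^{(n+1)} |||_a^2 = a(e^{(n+1)}, e^{(n+1)}) = \langle A e^{(n+1)}, e^{(n+1)}\rangle = \langle S e^{(n)}, e^{(n+1)}\rangle = s(e^{(n)}, e^{(n+1)}).
\]
The heart of the argument is then to bound $s(w,v)$ in a form that decouples $w$ and $v$.

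To this end, I would apply Cauchy--Schwarz directly in the six-fold integral defining $s$, viewing $\theta(\mathbf{x}, \bm{\mu}'\cdot\bm{\mu}, E'\to E) = \sqrt{\theta}\cdot\sqrt{\theta}$ as a weight and pairing $w(\mathbf{x}, \bm{\mu}', E')$ with one copy of $\sqrt{\theta}$ and $v(\mathbf{x}, \bm{\mu}, E)$ with the other. Integrating $\theta$ over $(\bm{\mu}, E)$ with $w^2$ factored out produces $\beta(\mathbf{x}, E')$ by \eqref{eqn:macro_scatter_cs}, whereas integrating over $(\bm{\mu}', E')$ with $v^2$ factored out produces $\gamma(\mathbf{x}, E)$ by \eqref{eqn:macro_inverse_scatter_cs}. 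This yields the key scattering estimate
\[
|s(w,v)| \le \|\sqrt{\beta}\, w\|_{L_2(\spaceangleenergydom)}\, \|\sqrt{\gamma}\, v\|_{L_2(\spaceangleenergydom)}.
\]
Invoking the pointwise bounds $\beta \le q_\beta (\alpha+\beta)$ and $\gamma \le q_\gamma (\alpha+\beta)$, together with $||| v |||_a^2 \ge \|\sqrt{\alpha+\beta}\, v\|_{L_2(\spaceangleenergydom)}^2$, I would then deduce $|s(e^{(n)}, e^{(n+1)})| \le \sqrt{q_\beta q_\gamma}\, |||e^{(n)}|||_a\, |||e^{(n+1)}|||_a$. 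Dividing by $|||e^{(n+1)}|||_a$ completes the contraction statement.

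For the \emph{a posteriori} bound, I would add and subtract $S u_h^{(n+1)}$ to rewrite the residual relation as $(A - S) e^{(n+1)} = S(u_h^{(n+1)} - u_h^{(n)})$. Testing with $e^{(n+1)}$ and invoking the coercivity of $a-s$ in the DGFEM-energy norm (Theorem \ref{thm:DGFEM_bilin_form_coercivity}) yields
\[
||| e^{(n+1)} |||_{DG}^2 \le a(e^{(n+1)}, e^{(n+1)}) - s(e^{(n+1)}, e^{(n+1)}) = s(u_h^{(n+1)} - u_h^{(n)},\, e^{(n+1)}).
\]
Applying the scattering estimate above to the right-hand side, then using the pointwise bound $\gamma \le r_\gamma \overline{\alpha}$ together with $|||v|||_{DG}^2 \ge \|\sqrt{\overline{\alpha}}\, v\|_{L_2(\spaceangleenergydom)}^2$, and finally dividing through by $|||e^{(n+1)}|||_{DG}$, delivers the stated estimate.

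The main obstacle I anticipate is the asymmetric scattering bound, specifically the pairing of $\beta$ with $w$ and $\gamma$ with $v$ rather than the reverse. This asymmetry reflects the down-scattering structure of $\theta$ and must be tracked carefully through the Cauchy--Schwarz step, since the positions of the primed versus unprimed variables in the definitions \eqref{eqn:macro_scatter_cs} and \eqref{eqn:macro_inverse_scatter_cs} determine which marginal integration produces $\beta$ and which produces $\gamma$. Everything else reduces to direct applications of coercivity, Cauchy--Schwarz and pointwise norm comparisons.
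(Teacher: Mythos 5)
Your proposal is correct and follows essentially the same route as the paper's proof: the contraction estimate via the identity $|||e^{(n+1)}|||_a^2 = s(e^{(n)},e^{(n+1)})$ followed by the weighted Cauchy--Schwarz bound $|s(w,v)|\le \|\sqrt{\beta}\,w\|_{L_2(\spaceangleenergydom)}\|\sqrt{\gamma}\,v\|_{L_2(\spaceangleenergydom)}$, and the \emph{a posteriori} bound via coercivity of $a-s$ in the DGFEM-energy norm. Your explicit justification of the $\sqrt{\theta}\cdot\sqrt{\theta}$ splitting, with $\beta$ and $\gamma$ arising from the respective marginal integrations, supplies a detail the paper leaves implicit, but the argument is identical in substance.
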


\begin{proof}
    Starting from the coercivity result \reff{eqn:dgfem_a_norm}, we have, using \reff{eqn:dgfem_si_operator}, that
    \begin{align*}
        ||| u_h-u_h^{(n+1)} |||_{a}^2 &= a(u_h-u_h^{(n+1)},u_h-u_h^{(n+1)}) \\
        &= s(u_h-u_h^{(n)},u_h-u_h^{(n+1)}).
    \end{align*}

    \noindent Invoking the Cauchy-Schwarz inequality, we obtain
    \begin{align*}
        &s(u_h-u_h^{(n)},u_h-u_h^{(n+1)}) \\
        &\quad\quad = \int_\energydom \int_\angledom \int_\spacedom S[u_h-u_h^{(n)}] (u_h-u_h^{(n+1)}) \ \d\mathbf{x}\d\bm{\mu}\d E \\
        &\quad\quad \le || \sqrt{\beta} (u_h-u_h^{(n)}) ||_{L_2(\spaceangleenergydom)} \ || \sqrt{\gamma} (u_h-u_h^{(n+1)}) ||_{L_2(\spaceangleenergydom)} \\
        &\quad\quad \le \sqrt{q_\beta q_\gamma} \ ||| u_h-u_h^{(n)} |||_{a} \ ||| u_h-u_h^{(n+1)} |||_{a}.
    \end{align*}

    \noindent The first part of the theorem now follows directly. To derive the \emph{a posteriori} error bound, we note that, by using Theorem \ref{thm:DGFEM_bilin_form_coercivity}, we have
    \begin{align*}
        ||| u_h-u_h^{(n+1)} |||_{DG}^2 &\le a(u_h-u_h^{(n+1)},u_h-u_h^{(n+1)}) - s(u_h-u_h^{(n+1)},u_h-u_h^{(n+1)}) \\
        &= s(u_h^{(n+1)}-u_h^{(n)},u_h-u_h^{(n+1)}) \\
        &\le || \sqrt{\beta} (u_h^{(n)}-u_h^{(n+1)}) ||_{L_2(\spaceangleenergydom)} \ || \sqrt{\gamma} (u_h-u_h^{(n+1)}) ||_{L_2(\spaceangleenergydom)} \\
        &\le \sqrt{r_\gamma} \ || \sqrt{\beta} (u_h^{(n)}-u_h^{(n+1)}) ||_{L_2(\spaceangleenergydom)} \ ||| u_h-u_h^{(n+1)} |||_{DG},
    \end{align*}

    \noindent as required.
\end{proof}

\begin{corollary}
    If $\sqrt{q_\beta q_\gamma} < 1$, the sequence of approximate solutions $\{u_h^{(n)}\}_{n=0}^\infty \subset\fespace$ generated by \reff{eqn:dgfem_si_operator} satisfies $u_h^{(n)}\rightarrow u_h$ as $n\rightarrow\infty$.
\end{corollary}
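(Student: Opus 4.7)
The plan is to iterate the contraction estimate established in Theorem~\ref{thm:dgfem_si_contraction_apost}. Applying that inequality successively for indices $n, n-1, \ldots, 0$, a straightforward induction yields
\begin{equation*}
    ||| u_h - u_h^{(n)} |||_a \le (q_\beta q_\gamma)^{n/2} \, ||| u_h - u_h^{(0)} |||_a, \qquad n \ge 0.
\end{equation*}
Under the hypothesis $\sqrt{q_\beta q_\gamma} < 1$, the prefactor $(q_\beta q_\gamma)^{n/2}$ is a geometric sequence with ratio strictly less than one, so the right-hand side tends to zero as $n \to \infty$, giving $|||u_h - u_h^{(n)}|||_a \to 0$.

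The only remaining point is to translate convergence in $|||\cdot|||_a$ into convergence of $u_h^{(n)}$ to $u_h$ in the finite element space $\fespace$. I would briefly observe that $|||\cdot|||_a$ is a genuine norm on $\brokengraphspace$ (and hence on $\fespace$): since $\beta, \gamma \ge 0$, we have the pointwise identity
\begin{equation*}
    (\alpha + \beta) - \overline{\alpha} = \tfrac{1}{2}(\beta + \gamma) \ge 0,
\end{equation*}
so the standing assumption $\overline{\alpha} \ge \alpha_0 > 0$ forces $\alpha + \beta \ge \alpha_0 > 0$. Consequently, the volumetric term in $|||\cdot|||_a^2$ dominates $\alpha_0 \|\cdot\|_{L_2(\spaceangleenergydom)}^2$, so $|||v_h|||_a = 0$ implies $v_h \equiv 0$. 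Convergence in $|||\cdot|||_a$ then yields $L_2$-convergence, and equivalence of norms on the finite-dimensional space $\fespace$ upgrades this to convergence in any ambient norm on $\fespace$.

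There is no real obstacle here: the corollary is an immediate Banach-contraction-style consequence of the contraction property already proved in Theorem~\ref{thm:dgfem_si_contraction_apost}, and the only mild subtlety is the one-line verification that $|||\cdot|||_a$ is a norm rather than a seminorm, which follows directly from the coercivity hypothesis on $\overline{\alpha}$.
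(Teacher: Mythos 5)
Your proposal is correct and is precisely the argument the paper intends: the corollary is stated without proof as an immediate consequence of the contraction estimate in Theorem~\ref{thm:dgfem_si_contraction_apost}, obtained by iterating that bound to get geometric decay of $|||u_h-u_h^{(n)}|||_a$. Your additional observation that $\alpha+\beta\ge\overline{\alpha}\ge\alpha_0>0$, so that $|||\cdot|||_a$ is a genuine norm and controls the $L_2$-norm, correctly closes the only small gap the paper leaves implicit.
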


Motivated by the operator form \reff{eqn:dgfem_si_operator}, we note that source iteration can be understood as a preconditioned Richardson iteration of the form
\begin{equation} \label{eqn:dgfem_general_richardson}
    u_h^{(n+1)} = u_h^{(n)} + P^{-1}\left( F - (A-S)u_h^{(n)} \right),
\end{equation}

\noindent where the preconditioner $P^{-1}=A^{-1}$. The contraction factor
\begin{equation} \label{eqn:scatater_ratio_poly_defn}
    c = \sqrt{q_\beta q_\gamma} = \sqrt{ \sup_{\mathbf{x}\in\spacedom,E\in\energydom} \frac{\beta(\mathbf{x},E)}{\alpha(\mathbf{x},E)+\beta(\mathbf{x},E)} \cdot \sup_{\mathbf{x}\in\spacedom,E\in\energydom} \frac{\gamma(\mathbf{x},E)}{\alpha(\mathbf{x},E)+\beta(\mathbf{x},E)} }
\end{equation}

\noindent may be understood as a poly-energetic analogue of the mono-energetic scattering ratio $c_{mono}$ \cite{adams2002fast}:
\begin{equation} \label{eqn:scatter_ratio_mono_defn}
    c_{mono} = \sup_{\mathbf{x}\in\spacedom} \frac{\beta(\mathbf{x})}{\alpha(\mathbf{x})+\beta(\mathbf{x})}.
\end{equation}

We also note that the spectral radius $\rho(G)$ of the source iteration operator $G=A^{-1}S$ satisfies $\rho(G)\le c$, as suggested by Theorem \ref{thm:dgfem_si_contraction_apost}. To see this, let $(\lambda,w_h)\in(\mathbb{C},\fespace_\mathbb{C})$ be an eigenvalue-eigenvector pair of $G$, so that we have
\begin{equation*}
    A^{-1}Sw_h = \lambda w_h.
\end{equation*}

\noindent It can be shown that a characterisation of $\lambda$ is given by
$
    \lambda = s(w_h,w_h^*)/a(w_h,w_h^*).
$
The boundedness of $|\lambda|$ follows by straightforward considerations.

In the mono-energetic case that, it can be shown that, provided the bilinear form $s(\cdot,\cdot)$ is symmetric positive-semidefinite, the spectrum of $A^{-1}S$ lies in a disc $D(a,r)$ of radius $r=\frac{c}{2}=\frac{c_{mono}}{2}$ centred at $r=\frac{c}{2}=\frac{c_{mono}}{2}$ \cite{radley2023discontinuous}. This is because, by the characterisation above, any $\lambda\in\sigma(A^{-1}S)$ can be written in the following form for some $w_h=x_h+iy_h$ with $x_h,y_h\in\fespace$:
\begin{align*}
    \lambda = \frac{s(w_h,w_h^*)}{a(w_h,w_h^*)} 
    = \frac{s(x_h,x_h) + s(y_h,y_h)}{a(x_h,x_h) + a(y_h,y_h) + i\left[ a(y_h,x_h)-a(x_h,y_h) \right]},
\end{align*}

\noindent where the symmetry of $s(\cdot,\cdot)$ is used to simplify the numerator and all bilinear forms are now real-valued. The bounds on $\sigma(A^{-1}S)$ in the complex plane are then shown after some simple considerations \cite{radley2023discontinuous}.

\subsubsection{Generalised source iteration for mono-energetic problems}

In the case where the energetic dependence of the test and trial functions is suppressed, we consider the following modification of \reff{eqn:dgfem_si_operator}: given $u_h^{(0)}\in\fespace$ and $0<\omega<1$, find $\{u_h^{(n)}\}_{n=0}^\infty\subset\fespace$ such that
\begin{equation} \label{eqn:dgfem_si_generalised_operator}
    (A-\omega M)u_h^{(n+1)} = (S-\omega M)u_h^{(n)} + F,
\end{equation}

\noindent for $n\ge0$, where the operator $M:\fespace\rightarrow\fespace$ is defined for all $w_h\in\fespace$ by
\begin{equation} \label{eqn:dgfem_mass_overrelax_operator}
    \langle Mw_h,v_h\rangle = m(w_h,v_h) \textnormal{ for all }v_h\in\fespace,
\end{equation}
and 
\begin{equation}
    m(w,v) = \int_\angledom \int_\spacedom \beta(\mathbf{x}) w(\mathbf{x},\bm{\mu}) v(\mathbf{x},\bm{\mu}) \d\mathbf{x}\d\bm{\mu}.
\end{equation}

In view of developing a convergence theory analogous to that of Theorem~\ref{thm:dgfem_si_contraction_apost}, we introduce the following lemma.
\begin{lemma} \label{lemma:scatter_mass_contraction}
    Assuming that $\theta(\mathbf{x},\bm{\mu}\cdot\bm{\mu}')$ admits the form
    \begin{equation*}
        \theta(\mathbf{x},\bm{\mu}\cdot\bm{\mu}') = \sum_{\ell=0}^\infty \theta_\ell(\mathbf{x}) P_{\ell}(\bm{\mu}\cdot\bm{\mu}'),
    \end{equation*}

    \noindent where $P_\ell$ denotes the Legendre polynomial of degree $\ell$ \cite{stacey2018nuclear}, we have that
    \begin{equation}
        |s(w,v)-\omega m(w,v)| \le r(\theta,\omega) m(w,w)^\frac12 m(v,v)^\frac12
    \end{equation}

    \noindent for all $w,v\in L_2(\spacedom\times\angledom)$, where 
    \begin{equation*}
        r(\theta,\omega) = \sup \left\{ \sup_{\mathbf{x}\in\spacedom} \left| \frac{\theta_\ell(\mathbf{x})}{\beta(\mathbf{x})} - \omega \right| : 0\le\ell<\infty \right\}.
    \end{equation*}
\end{lemma}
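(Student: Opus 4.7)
The plan is to diagonalise $s(\cdot,\cdot)$ against an orthonormal spherical-harmonic basis $\{Y_\ell^m\}$ of $L_2(\angledom)$ and compare it mode-by-mode with $m(\cdot,\cdot)$. First I would substitute the given Legendre expansion of $\theta$ into the definition of $S[w]$ and apply the addition theorem
\[
P_\ell(\bm{\mu}\cdot\bm{\mu}') = c_\ell \sum_m Y_\ell^m(\bm{\mu})\overline{Y_\ell^m(\bm{\mu}')},
\]
where $c_\ell$ is a fixed constant depending only on $\ell$ and $d$ (the analogous trigonometric identity for Fourier/Chebyshev modes on $S^1$ treats $d=2$). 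This converts the angular convolution defining $S[w]$ into a sum of products of spherical-harmonic moments $w_{\ell,m}(\mathbf{x}) = \int_\angledom \overline{Y_\ell^m(\bm{\mu})}\,w(\mathbf{x},\bm{\mu})\,d\bm{\mu}$.

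Expanding $v$ in the same basis and performing the $\bm{\mu}$-integration by orthonormality, both bilinear forms collapse to spatial integrals over mode sums,
\[
s(w,v) = \int_\spacedom \sum_{\ell,m} \widetilde\theta_\ell(\mathbf{x})\, w_{\ell,m}(\mathbf{x})\overline{v_{\ell,m}(\mathbf{x})}\,d\mathbf{x}, \qquad m(w,v) = \int_\spacedom \beta(\mathbf{x}) \sum_{\ell,m} w_{\ell,m}(\mathbf{x})\overline{v_{\ell,m}(\mathbf{x})}\,d\mathbf{x},
\]
with $\widetilde\theta_\ell = c_\ell \theta_\ell$. Applying the same expansion to the definition of $\beta$ in \reff{eqn:macro_scatter_cs} shows that only the $\ell=0$ mode contributes, yielding $\beta = \widetilde\theta_0$, so that the ratios $\widetilde\theta_\ell/\beta$ are precisely those entering $r(\theta,\omega)$.

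Subtracting $\omega m(w,v)$ and factoring $\beta(\mathbf{x})$ out of the integrand gives
\[
s(w,v) - \omega m(w,v) = \int_\spacedom \beta(\mathbf{x}) \sum_{\ell,m} \Bigl(\tfrac{\widetilde\theta_\ell(\mathbf{x})}{\beta(\mathbf{x})}-\omega\Bigr) w_{\ell,m}(\mathbf{x})\overline{v_{\ell,m}(\mathbf{x})}\,d\mathbf{x}.
\]
Taking absolute values, bounding the bracketed factor uniformly in $(\mathbf{x},\ell,m)$ by $r(\theta,\omega)$, and applying the discrete Cauchy--Schwarz inequality on the $(\ell,m)$ sum followed by the continuous Cauchy--Schwarz inequality on the $\beta$-weighted spatial integral produces the claimed bound, where Parseval's identity $\sum_{\ell,m}|w_{\ell,m}(\mathbf{x})|^2 = ||w(\mathbf{x},\cdot)||_{L_2(\angledom)}^2$ recombines the mode sums into $m(w,w)^{\frac12}$ and $m(v,v)^{\frac12}$.

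The principal obstacle I anticipate is the bookkeeping of normalisation constants: one must verify that the convention implicit in the hypothesis $\theta = \sum_\ell \theta_\ell P_\ell$ genuinely forces $\widetilde\theta_0 = \beta$, so that the ratios appearing after factoring $\beta$ out of the integrand coincide with $\theta_\ell/\beta$ rather than being shifted by a factor of $c_\ell$; any residual constant would have to be absorbed into the statement. Convergence of the Legendre expansion in $L_2((-1,1))$ together with Fubini's theorem justifies interchanging the mode sum with the spatial and angular integrations throughout.
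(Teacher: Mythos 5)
Your proposal is correct and follows essentially the same route as the paper's proof: expand $w$ and $v$ in spherical harmonics, use the addition theorem to diagonalise $s(\cdot,\cdot)$ mode-by-mode against $m(\cdot,\cdot)$, pull out the uniform bound $r(\theta,\omega)$, and recombine via Cauchy--Schwarz and orthogonality. The normalisation bookkeeping you flag is handled in the paper by the convention $\int_\angledom Y_{\ell,m}Y_{\ell',m'}^*\,\mathrm{d}\bm{\mu} = \frac{4\pi}{2\ell+1}\delta_{\ell,\ell'}\delta_{m,m'}$, under which the ratio appearing in the bound is exactly $\theta_\ell/\beta$ as claimed.
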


\begin{proof}
    It is convenient to prove the more general inequality
    \begin{equation*}
        |s(w,v^*)-\omega m(w,v^*)| \le r(\theta,\omega) m(w,w^*)^\frac12 m(v,v^*)^\frac12
    \end{equation*}

    \noindent in the case where $w$ and $v$ are possibly complex-valued and $d=3$; the lemma is then proven by selecting $w$ and $v$ to be real-valued. Here, we use the notation $v^*$ to denote the complex conjugate of a complex-valued function $v\in L_2(\spacedom\times\angledom)$. The proof generalises trivially to the case $d=2$.

    Let $w$ and $v$ admit the following spherical harmonics decompositions:
    \begin{align*}
        w(\mathbf{x},\bm{\mu}) &= \sum_{\ell=0}^\infty \frac{2\ell+1}{4\pi} \sum_{m=-\ell}^\ell w_{\ell,m}(\mathbf{x}) Y_{\ell,m}(\bm{\mu}), \\
        v(\mathbf{x},\bm{\mu}) &= \sum_{\ell=0}^\infty \frac{2\ell+1}{4\pi} \sum_{m=-\ell}^\ell v_{\ell,m}(\mathbf{x}) Y_{\ell,m}(\bm{\mu}),
    \end{align*}

    \noindent where $\{Y_{\ell,m} : 0\le\ell<\infty, -\ell\le m\le\ell\}$ denotes the set of spherical harmonics functions satisfying the following orthogonality property on $L_2(\angledom)$:
    \begin{equation*}
        \int_\angledom Y_{\ell,m}(\bm{\mu}) Y_{\ell',m'}^*(\bm{\mu}) \d\bm{\mu} = \frac{4\pi}{2\ell+1} \delta_{\ell,\ell'} \delta_{m,m'}
    \end{equation*}

    \noindent and $\delta_{ij}$ denotes the Kronecker delta.

    Substituting these expressions for $w$ and $v$ into $m(w,v^*)$ yields
    \begin{align*}
        m(w,v^*) &= \int_\angledom \int_\spacedom \beta(\mathbf{x}) w(\mathbf{x},\bm{\mu}) v^*(\mathbf{x},\bm{\mu}) \d\mathbf{x}\d\bm{\mu} \\
        %
        %&= \int_\spacedom \beta(\mathbf{x}) \int_\angledom  \left( \sum_{\ell=0}^\infty \frac{2\ell+1}{4\pi} \sum_{m=-\ell}^\ell w_{\ell,m}(\mathbf{x}) Y_{\ell,m}(\bm{\mu}') \right) \left( \sum_{\ell'=0}^\infty \frac{2\ell'+1}{4\pi} \sum_{m'=-\ell'}^{\ell'} v_{\ell',m'}^*(\mathbf{x}) Y_{\ell',m'}^*(\bm{\mu}) \right) \d\bm{\mu}\d\mathbf{x} \\
        %
        %&= \sum_{\ell=0}^\infty \sum_{\ell'=0}^\infty \frac{2\ell+1}{4\pi} \frac{2\ell'+1}{4\pi} \sum_{m=-\ell}^{\ell} \sum_{m'=-\ell'}^{\ell'} \int_\spacedom \beta(\mathbf{x}) w_{\ell,m}(\mathbf{x}) v_{\ell',m'}^*(\mathbf{x}) \int_\angledom Y_{\ell,m}(\bm{\mu}) Y_{\ell',m'}^*(\bm{\mu}) \d\bm{\mu}\d\mathbf{x} \\
        %
        %&= \sum_{\ell=0}^\infty \sum_{\ell'=0}^\infty \frac{2\ell+1}{4\pi} \frac{2\ell'+1}{4\pi} \sum_{m=-\ell}^{\ell} \sum_{m'=-\ell'}^{\ell'} \int_\spacedom \beta(\mathbf{x}) w_{\ell,m}(\mathbf{x}) v_{\ell',m'}^*(\mathbf{x}) \d\mathbf{x} \cdot \frac{4\pi}{2\ell'+1} \delta_{\ell,\ell'} \delta_{m,m'} \\
        %
        &= \sum_{\ell=0}^\infty \frac{2\ell+1}{4\pi} \sum_{m=-\ell}^\ell \int_\spacedom \beta(\mathbf{x}) w_{\ell,m}(\mathbf{x}) v_{\ell,m}^*(\mathbf{x}) \d\mathbf{x}.
    \end{align*}

    \noindent Substituting the expressions for $w$, $v$ and $\theta$ (defined above) into $s(w,v^*)$, and noting that the Legendre polynomials $P_\ell(\bm{\mu}\cdot\bm{\mu}')$ satisfy
    \begin{equation*}
        P_\ell(\bm{\mu}\cdot\bm{\mu}') = \frac{2\ell+1}{4\pi} \sum_{m=-\ell}^\ell Y_{\ell,m}^*(\bm{\mu}') Y_{\ell,m}(\bm{\mu}),
    \end{equation*}

    \noindent we have
    \begin{align*}
        s(w,v^*) &= \int_\angledom \int_\spacedom \int_\angledom \theta(\mathbf{x},\bm{\mu}\cdot\bm{\mu}') w(\mathbf{x},\bm{\mu}') v^*(\mathbf{x},\bm{\mu}) \d\bm{\mu}'\d\mathbf{x}\d\bm{\mu} \\
        %
        %&= \int_\spacedom \int_\angledom \int_\angledom \sum_{r=0}^\infty \frac{2r+1}{4\pi} \theta_r(\mathbf{x}) \sum_{s=-r}^r Y_{r,s}^*(\bm{\mu}') Y_{r,s}(\bm{\mu}) \\
        %&\quad\quad \cdot \sum_{\ell=0}^\infty \frac{2\ell+1}{4\pi} \sum_{m=-\ell}^\ell w_{\ell,m}(\mathbf{x}) Y_{\ell,m}(\bm{\mu}') \cdot \sum_{\ell'=0}^\infty \frac{2\ell'+1}{4\pi} \sum_{m'=-\ell'}^{\ell'} v_{\ell',m'}^*(\mathbf{x}) Y_{\ell',m'}^*(\bm{\mu}) \d\bm{\mu}'\d\bm{\mu}\d\mathbf{x} \\
        %
        %&= \sum_{r=0}^\infty \sum_{\ell=0}^\infty \sum_{\ell'=0}^\infty \frac{2r+1}{4\pi} \frac{2\ell+1}{4\pi} \frac{2\ell'+1}{4\pi} \sum_{s=-r}^r \sum_{m=-\ell}^\ell \sum_{m'=-\ell'}^{\ell'} \cdot \\
        %&\quad\quad \int_\spacedom \theta_r(\mathbf{x}) w_{\ell,m}(\mathbf{x}) v_{\ell',m'}^*(\mathbf{x}) \left( \int_\angledom Y_{r,s}^*(\bm{\mu}') Y_{l,m}(\bm{\mu}') \d\bm{\mu}' \right) \left( \int_\angledom Y_{r,s}(\bm{\mu}) Y_{l',m'}^*(\bm{\mu}) \d\bm{\mu} \right) \d\mathbf{x} \\
        %
        %&= \sum_{r=0}^\infty \sum_{\ell=0}^\infty \sum_{\ell'=0}^\infty \frac{2r+1}{4\pi} \frac{2\ell+1}{4\pi} \frac{2\ell'+1}{4\pi} \sum_{s=-r}^r \sum_{m=-\ell}^\ell \sum_{m'=-\ell'}^{\ell'} \cdot \\
        %&\quad\quad \int_\spacedom \theta_r(\mathbf{x}) w_{\ell,m}(\mathbf{x}) v_{\ell',m'}^*(\mathbf{x}) \d\mathbf{x} \cdot \frac{4\pi}{2\ell'+1} \delta_{r,\ell'} \delta_{s,m'} \cdot \frac{4\pi}{2\ell+1} \delta_{r,\ell} \delta_{s,m} \\
        %
        &= \sum_{r=0}^\infty \frac{2r+1}{4\pi} \sum_{s=-r}^r \int_\spacedom \theta_r(\mathbf{x}) w_{r,s}(\mathbf{x}) v_{r,s}^*(\mathbf{x}) \d\mathbf{x}.
    \end{align*}

    \noindent Hence, we deduce that
    \begin{align*}
        &\left|  s(w,v^*) - \omega m(w,v^*) \right| \\
        &= \left| \sum_{\ell=0}^\infty \frac{2\ell+1}{4\pi} \sum_{m=-\ell}^\ell \int_\spacedom \left( \theta_\ell(\mathbf{x}) - \omega\beta(\mathbf{x}) \right) w_{\ell,m}(\mathbf{x}) v_{\ell,m}^*(\mathbf{x}) \d\mathbf{x} \right| \\
        &\le \sum_{\ell=0}^\infty \frac{2\ell+1}{4\pi} \sum_{m=-\ell}^\ell \int_\spacedom \left| \theta_\ell(\mathbf{x}) - \omega\beta(\mathbf{x}) \right| \left|w_{\ell,m}(\mathbf{x})\right| \left|v_{\ell,m}(\mathbf{x})\right| \d\mathbf{x} \\ 
        &\le \underbrace{\sup_{\ell=0}^\infty \sup_{\mathbf{x}\in\spacedom} \left| \frac{\theta_\ell(\mathbf{x}) - \omega\beta(\mathbf{x})}{\beta(\mathbf{x})} \right|}_{=:r(\theta,\omega)} \sum_{\ell=0}^\infty \frac{2\ell+1}{4\pi} \sum_{m=-\ell}^\ell \int_\spacedom \beta(\mathbf{x}) \left| w_{\ell,m}(\mathbf{x}) \right| \left| v_{\ell,m}(\mathbf{x}) \right| \d\mathbf{x} \\
        &\le r(\theta,\omega) \sum_{\ell=0}^\infty \frac{2\ell+1}{4\pi} \sum_{m=-\ell}^\ell \|\sqrt{\beta} \, w_{\ell,m}\|_{L_2(\Omega)} \|\sqrt{\beta} \, v_{\ell,m}\|_{L_2(\Omega)} \\
        &\le r(\theta,\omega) \left( \sum_{\ell=0}^\infty \frac{2\ell+1}{4\pi} \sum_{m=-\ell}^\ell \|\sqrt{\beta} \, w_{\ell,m}\|_{L_2(\Omega)}^2 \right)^\frac12 \left( \sum_{\ell=0}^\infty \frac{2\ell+1}{4\pi} \sum_{m=-\ell}^\ell \|\sqrt{\beta} \, v_{\ell,m}\|_{L_2(\Omega)}^2 \right)^\frac12 \\
        &= r(\theta,\omega) m(w,w^*)^\frac12 m(v,v^*)^\frac12.
    \end{align*}
\end{proof}

\begin{remark} \label{remark:mercer_condn_remark_simplification}
    We point out that $\beta = \theta_0$. Moreover, if the differential scattering cross-section satisfies Mercer's condition \cite{schoenberg1988positive} (so that $s(\cdot,\cdot)$ is symmetric and positive-semidefinite), we have that $r(\theta,\omega)=\max\{\omega,1-\omega\}$.
\end{remark}

Before we analyse the convergence of the iteration \reff{eqn:dgfem_si_generalised_operator}, we briefly remark that it is well-posed and that, for $0\le \omega < 1$, $a(\cdot,\cdot)-\omega m(\cdot,\cdot)$ is coercive with respect to the norm $||| v |||_{a,\omega}:\brokengraphspace\rightarrow\Re$ defined for all $v\in\brokengraphspace$ by:
\begin{align*}
	||| v |||_{a,\omega}^2 &= || \sqrt{\alpha+(1-\omega)\beta} v ||_{L_2(\spaceangleenergydom)}^2 \\
	&\quad\quad + \frac12 \int_\angledom \left( \sum_{\spacekappa\in\spacemesh} ||v^+-v^-||_{\partial_-\spacekappa\setminus\partial\spacedom}^2 + ||v^+||_{\partial\spacekappa\cap\partial\spacedom}^2 \right) \ \d\bm{\mu}
\end{align*}

\noindent in the sense that
\begin{equation*}
	||| v |||_{a,\omega}^2 = a(v,v) - \omega m(v,v)
\end{equation*}

\noindent for all $v\in\brokengraphspace$. Note that $|||\cdot|||_a=|||\cdot|||_{a,0}$.

\begin{theorem} \label{thm:dgfem_general_si_contraction_apost}
    Let $u_h\in\fespace$ denote the exact solution to the DGFEM problem \reff{eqn:dgfem_lbte_full} and $\{u_h^{(n)}\}_{n=0}^\infty\subset\fespace$ denote the sequence of approximations to $u_h$ defined by \reff{eqn:dgfem_si_generalised_operator}. Then we have
    \begin{equation}
        ||| u_h - u_h^{(n+1)} |||_{a,\omega} \le r(\theta,\omega) q_{\beta,\omega} ||| u_h - u_h^{(n)} |||_{a,\omega}
    \end{equation}

    \noindent for all $n\ge0$, where
    \begin{equation*}
        q_{\beta,\omega} = \sup_{\mathbf{x}\in\spacedom} \frac{\beta(\mathbf{x})}{\alpha(\mathbf{x}) + (1-\omega)\beta(\mathbf{x})}.
    \end{equation*}

    \noindent Thus, we have the following \emph{a posteriori} solver error estimate:
    \begin{equation}
        ||| u_h-u_h^{(n+1)} |||_{DG} \le r(\theta,\omega) \sqrt{r_\beta} ||\sqrt{\beta} (u_h^{(n+1)}-u_h^{(n)})||_{L_2(\spaceangleenergydom)}
    \end{equation}

    \noindent for all $n\ge0$, where
    \begin{equation}
        r_\beta = \sup_{\mathbf{x}\in\spacedom} \frac{\beta(\mathbf{x})}{\alpha(\mathbf{x})}.
    \end{equation}
\end{theorem}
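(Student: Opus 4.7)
The plan is to mirror the proof of Theorem~\ref{thm:dgfem_si_contraction_apost}, replacing the pair $(A,S)$ by the shifted pair $(A - \omega M, S - \omega M)$ and invoking Lemma~\ref{lemma:scatter_mass_contraction} in place of the direct Cauchy--Schwarz estimate on $s$ used there.

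First, since $(A-S)u_h = F$, elementary rearrangement of \reff{eqn:dgfem_si_generalised_operator} yields the error equation
\begin{equation*}
    (A-\omega M)\bigl(u_h - u_h^{(n+1)}\bigr) = (S-\omega M)\bigl(u_h - u_h^{(n)}\bigr).
\end{equation*}
Testing against $u_h - u_h^{(n+1)}$, the identity $||| v |||_{a,\omega}^2 = a(v,v) - \omega m(v,v)$ makes the left-hand side equal to $||| u_h - u_h^{(n+1)} |||_{a,\omega}^2$, while Lemma~\ref{lemma:scatter_mass_contraction} bounds the right-hand side by
\begin{equation*}
    r(\theta,\omega)\, \|\sqrt{\beta}(u_h - u_h^{(n)})\|_{L_2(\spaceangleenergydom)}\, \|\sqrt{\beta}(u_h - u_h^{(n+1)})\|_{L_2(\spaceangleenergydom)}.
\end{equation*}
The pointwise bound $\beta \le q_{\beta,\omega}\bigl(\alpha + (1-\omega)\beta\bigr)$ then gives $\|\sqrt{\beta}\, v\|_{L_2(\spaceangleenergydom)}^2 \le q_{\beta,\omega}\, ||| v |||_{a,\omega}^2$; substituting this in for both factors and dividing through by $||| u_h - u_h^{(n+1)} |||_{a,\omega}$ delivers the contraction estimate.

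For the a posteriori bound I would rewrite the error equation in the form
\begin{equation*}
    (A - S)\bigl(u_h - u_h^{(n+1)}\bigr) = (S - \omega M)\bigl(u_h^{(n+1)} - u_h^{(n)}\bigr),
\end{equation*}
obtained by subtracting $(S - \omega M)(u_h - u_h^{(n+1)})$ from both sides of the first error equation. Testing with $u_h - u_h^{(n+1)}$, Theorem~\ref{thm:DGFEM_bilin_form_coercivity} bounds the left-hand side below by $||| u_h - u_h^{(n+1)} |||_{DG}^2$, while Lemma~\ref{lemma:scatter_mass_contraction} controls the right-hand side by $r(\theta,\omega)\, \|\sqrt{\beta}(u_h^{(n+1)} - u_h^{(n)})\|_{L_2(\spaceangleenergydom)}\, \|\sqrt{\beta}(u_h - u_h^{(n+1)})\|_{L_2(\spaceangleenergydom)}$. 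Since the generalised iteration is used in the mono-energetic setting, $\gamma = \beta$ and hence $\overline{\alpha} = \alpha$, which yields $\beta \le r_\beta\, \overline{\alpha}$ and therefore $\|\sqrt{\beta}(u_h - u_h^{(n+1)})\|_{L_2(\spaceangleenergydom)} \le \sqrt{r_\beta}\,||| u_h - u_h^{(n+1)} |||_{DG}$. Dividing through completes the proof.

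The bulk of the analytical effort has been absorbed into Lemma~\ref{lemma:scatter_mass_contraction}, so the main thing to be careful about is to apply the joint estimate on $s - \omega m$ in one go (this is what produces the factor $r(\theta,\omega)$) rather than estimating $|s(\cdot,\cdot)|$ and $\omega|m(\cdot,\cdot)|$ separately, which would only give the suboptimal constant $r(\theta,\omega) + \omega$. The minor but essential algebraic observation is that the first error equation can be re-expressed with $(A-S)$ on the left and the consecutive iterate difference $u_h^{(n+1)} - u_h^{(n)}$ on the right, which is precisely what makes the a posteriori bound computable from quantities available at iteration $n+1$.
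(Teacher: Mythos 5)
Your proposal is correct and follows essentially the same route as the paper's proof: the contraction estimate is obtained by testing the error equation against $u_h-u_h^{(n+1)}$, applying Lemma~\ref{lemma:scatter_mass_contraction} to the combined form $s-\omega m$, and absorbing the weighted $L_2$-norms into $|||\cdot|||_{a,\omega}$ via the pointwise bound defining $q_{\beta,\omega}$; the \emph{a posteriori} bound likewise uses the rearranged error equation with $(A-S)$ on the left, Theorem~\ref{thm:DGFEM_bilin_form_coercivity}, and the mono-energetic identification $\overline{\alpha}=\alpha$. Your closing remark about estimating $s-\omega m$ jointly rather than term-by-term is exactly the point of the lemma as used in the paper.
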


\begin{proof}
    The proof closely follows that of Theorem \ref{thm:dgfem_si_contraction_apost}, albeit in the mono-energetic case, whereby we exploit the coercivity of $a(\cdot,\cdot)-\omega m(\cdot,\cdot)$ with respect to $|||\cdot|||_{a,\omega}$. Using Lemma \ref{lemma:scatter_mass_contraction} and \reff{eqn:dgfem_si_generalised_operator}, we have
    \begin{align*}
        ||| u_h-u_h^{(n+1)} |||_{a,\omega}^2 &= a( u_h-u_h^{(n+1)} , u_h-u_h^{(n+1)} ) 
         - \omega m( u_h-u_h^{(n+1)} , u_h-u_h^{(n+1)} ) \\
        &= s( u_h-u_h^{(n)} , u_h-u_h^{(n+1)} ) 
         - \omega m( u_h-u_h^{(n)} , u_h-u_h^{(n+1)} ) \\
        &\le r(\theta,\omega) \ || \sqrt{\beta} (u_h-u_h^{(n)}) ||_{L_2(\spaceangleenergydom)} \ || \sqrt{\beta} (u_h-u_h^{(n+1)}) ||_{L_2(\spaceangleenergydom)} \\
        &\le r(\theta,\omega) \sup_{\mathbf{x}\in\spacedom} \frac{\beta(\mathbf{x})}{\alpha(\mathbf{x})+(1-\omega)\beta(\mathbf{x})} \ ||| u_h-u_h^{(n)} |||_{a,\omega} \ ||| u_h-u_h^{(n+1)} |||_{a,\omega},
    \end{align*}

    \noindent as required. To show the \emph{a posteriori} error bound, we have
    \begin{align*}
        ||| u_h-u_h^{(n+1)} |||_{DG}^2 &\le a(u_h-u_h^{(n+1)},u_h-u_h^{(n+1)}) - s(u_h-u_h^{(n+1)},u_h-u_h^{(n+1)}) \\
        %
        %&= s(u_h-u_h^{(n)},u_h-u_h^{(n+1)}) - s(u_h-u_h^{(n+1)},u_h-u_h^{(n+1)}) \\
        %&\quad\quad + \omega m( u_h^{(n)} - u_h^{(n+1)} , u_h-u_h^{(n+1)} ) \\
        %
        &= s(u_h^{(n+1)}-u_h^{(n)} , u_h-u_h^{(n+1)}) - \omega m(u_h^{(n+1)}-u_h^{(n)} , u_h-u_h^{(n+1)}) \\
        &\le r(\theta,\omega) \ || \sqrt{\beta} (u_h^{(n+1)}-u_h^{(n)}) ||_{L_2(\spaceangleenergydom)} \ || \sqrt{\beta} (u_h-u_h^{(n+1)}) ||_{L_2(\spaceangleenergydom)} \\
        &\le r(\theta,\omega) \sqrt{\sup_{\mathbf{x}\in\spacedom} \frac{\beta(\mathbf{x})}{\overline{\alpha}(\mathbf{x})}} \ || \sqrt{\beta} (u_h^{(n+1)}-u_h^{(n)}) ||_{L_2(\spaceangleenergydom)} \ || u_h-u_h^{(n+1)} ||_{DG}.
    \end{align*}

    \noindent Noting that the integrals over $\energydom$ appearing in \reff{eqn:macro_scatter_cs} and \reff{eqn:macro_inverse_scatter_cs} are removed in the mono-energetic case, and that $\theta(\mathbf{x},\bm{\mu}'\cdot\bm{\mu},E'\rightarrow E)=\theta(\mathbf{x},\bm{\mu}'\cdot\bm{\mu})$, we have that $\beta=\gamma$ and so \reff{eqn:alphabar_defn} reduces to $\overline{\alpha}=\alpha$. The second part of the theorem is proven on rearrangement.
\end{proof}

\begin{corollary}
    If $r(\theta,\omega) q_{\beta,\omega}<1$, the sequence of approximate solutions $\{u_h^{(n)}\}\subset\fespace$ generated by \reff{eqn:dgfem_si_generalised_operator} satisfies $u_h^{(n)}\rightarrow u_h$ as $n\rightarrow\infty$.
\end{corollary}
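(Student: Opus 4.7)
The plan is to prove the corollary as a direct consequence of the contraction estimate established in Theorem \ref{thm:dgfem_general_si_contraction_apost}. The key idea is that under the hypothesis $r(\theta,\omega)q_{\beta,\omega}<1$, the recursion yields geometric decay of the error in the $|||\cdot|||_{a,\omega}$ norm.

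First I would apply the contraction estimate inductively. Since, for every $n\ge 0$,
\begin{equation*}
||| u_h - u_h^{(n+1)} |||_{a,\omega} \le r(\theta,\omega)\,q_{\beta,\omega}\,||| u_h - u_h^{(n)} |||_{a,\omega},
\end{equation*}
a trivial induction on $n$ gives
\begin{equation*}
||| u_h - u_h^{(n)} |||_{a,\omega} \le \bigl(r(\theta,\omega)\,q_{\beta,\omega}\bigr)^n \, ||| u_h - u_h^{(0)} |||_{a,\omega}.
\end{equation*}
Under the assumption $r(\theta,\omega)\,q_{\beta,\omega}<1$, the prefactor tends to zero as $n\to\infty$, and since the initial error $||| u_h - u_h^{(0)} |||_{a,\omega}$ is finite, we conclude that $||| u_h - u_h^{(n)} |||_{a,\omega}\to 0$.

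The remaining step is to confirm that convergence in $|||\cdot|||_{a,\omega}$ is genuine convergence in $\fespace$. To see this, I would note that in the mono-energetic setting the assumption \reff{eqn:alphabar_defn} reduces (since $\beta=\gamma$) to $\alpha\ge\alpha_0>0$, so the weight $\alpha+(1-\omega)\beta$ in the definition of $|||\cdot|||_{a,\omega}$ is bounded below by $\alpha_0>0$ for every $0\le \omega<1$. Hence $|||\cdot|||_{a,\omega}$ is a genuine norm on the finite-dimensional space $\fespace$, and all norms on $\fespace$ are equivalent. Therefore $||| u_h - u_h^{(n)} |||_{a,\omega}\to 0$ implies $u_h^{(n)}\to u_h$ in any (and hence in every) sense.

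There is no real obstacle here; the entire statement is a routine consequence of the contraction bound combined with the observation that $|||\cdot|||_{a,\omega}$ is a norm in the relevant regime. The only mildly non-obvious point worth flagging is the latter verification, since without the lower bound $\alpha\ge\alpha_0>0$ the quantity $|||\cdot|||_{a,\omega}$ would only be a seminorm and the final conclusion would require additional justification.
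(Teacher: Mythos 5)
Your proof is correct and follows exactly the argument the paper intends (the corollary is stated without proof as an immediate consequence of Theorem \ref{thm:dgfem_general_si_contraction_apost}): iterating the contraction bound gives geometric decay of $||| u_h - u_h^{(n)} |||_{a,\omega}$, and since $\overline{\alpha}=\alpha\ge\alpha_0>0$ in the mono-energetic setting the weight $\alpha+(1-\omega)\beta$ is bounded below, so $|||\cdot|||_{a,\omega}$ is a genuine norm on the finite-dimensional space $\fespace$ and convergence of the iterates follows. Your explicit verification of the norm/seminorm distinction is a worthwhile detail that the paper leaves implicit.
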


As in the case of source iteration, we note that generalised source iteration can be understood as a preconditioned Richardson iteration of the form given in \reff{eqn:dgfem_general_richardson} where now the preconditioner $P^{-1}=(A-\omega M)^{-1}$. Likewise, we also note that the spectral radius $\rho(G_\omega)$ of the generalised source iteration operator $G_\omega = (A-\omega M)^{-1}(S-\omega M)$ satisfies $\rho(G) \le c_{mono,\omega}$ where
\begin{equation*}
    c_{mono,\omega} = r(\theta,\omega) \cdot \sup_{\mathbf{x}\in\spacedom} \frac{\beta(\mathbf{x})}{\alpha(\mathbf{x})+(1-\omega)\beta(\mathbf{x})} = \frac{r(\theta,\omega) c_{mono}}{1-\omega c_{mono}},
\end{equation*}

\noindent as suggested by Theorem \ref{thm:dgfem_general_si_contraction_apost}. The proof follows identically as in the case of standard (poly-energetic) source iteration.

Under the assumptions on $s(\cdot,\cdot)$ in Remark \ref{remark:mercer_condn_remark_simplification}, it can be shown that the spectrum of $(A-\omega M)^{-1}(S-\omega M)$ lies in the union of two discs $D(a_1,r_1)\cup D(a_2,r_2)$ of radii $r_1=\frac{(1-\omega) c_{mono}}{2(1-\omega c_{mono})}$, $r_2=\frac{\omega c_{mono}}{2(1-\omega c_{mono})}$ and centred at $a_1=\frac{(1-\omega)c_{mono}}{2(1-\omega c_{mono})}$, $a_2=\frac{-\omega c_{mono}}{2(1-\omega c_{mono})}$, respectively, cf. \cite{radley2023discontinuous}.

\subsection{GMRES}

The generalised minimal residual (GMRES) method \cite{saad1986gmres} is one of the most popular Krylov subspace methods for large, nonsymmetric sparse systems of linear equations and has been proposed for neutron transport applications in \cite{patton2002application}. Given the problem
\begin{equation} \label{eqn:general_operator_eqn}
    Ax = b,
\end{equation}

\noindent where $x$ and $b$ are elements of a (possibly infinite-dimensional) complex separable Hilbert space $X$ endowed with an inner product $(\cdot,\cdot)_X$, $A:X\rightarrow X$ denotes a bounded and invertible linear operator, and $r_0=b-Ax_0$ denotes the initial residual induced by an initial guess $x_0\in X$, GMRES constructs a sequence of Krylov subspaces $\{K_n\}_{n\ge0}$, where
$
    K_n = \textnormal{span}\{ r_0, Ar_0, \dots, A^{n-1}r_0 \}.
$
At iteration $n$, the $n^{th}$ iterate $x_n\in X$ is chosen to satisfy
\begin{equation} \label{eqn:gmres_minim_problem}
    x_n = {\arg\min}_{y\in x_0+K_n} || b - Ay ||_X
\end{equation}

\noindent and GMRES computes the residual error $||r_n||_X=||b-Ax_n||_X$ as a byproduct. Typically, $X$ is usually taken to be $\mathbb{R}^n$ or $\mathbb{C}^n$ and the inner product $(\cdot,\cdot)_X$ is taken to be the Euclidean inner product; generalisations to other Hilbert spaces have been considered, cf., \cite{gasparo2008some,gunnel2014note}.

When $X=\mathbb{R}^n$ endowed with the usual inner product and $A=Q\Lambda Q^{-1}$ represents a diagonalisable matrix with $\Lambda=\textnormal{diag}\{\lambda_1,\dots,\lambda_n\}$ a diagonal matrix of eigenvalues and $Q$ a matrix of corresponding eigenvectors, the residual error at the $k^{th}$ iteration of GMRES satisfies \cite{saad2003iterative}
\begin{equation*}
    \frac{||r_k||_2}{||r_0||_2} \le ||Q||_2 ||Q^{-1}||_2 \min_{p\in\mathbb{P}_k,p(0)=1} \max_{i=1,\dots,n} |p(\lambda_i)|,
\end{equation*}

\noindent where $\mathbb{P}_k$ denotes the set of polynomials of degree $k$ with leading coefficient equal to 1. In particular, if $\sigma(A)$ is contained in the disc $D(a,r)$ centred at $a\in\mathbb{C}$ with radius $r>0$, then we have
\begin{align*}
    \min_{p\in\mathbb{P}_k,p(0)=1} \max_{i=1,\dots,n} |p(\lambda_i)| &\le \min_{p\in\mathbb{P}_k,p(0)=1} \max_{\lambda\in D(a,r)} |p(\lambda)| \\
    &\le \max_{\lambda\in D(a,r)} \left| \left( 1-\frac{z}{a} \right)^k \right| = \left( \frac{r}{|a|} \right)^k.
\end{align*}

While we do not give a formal proof of convergence of GMRES applied to the operator equation \reff{eqn:dgfem_lbte_operator}, we point out that pre-multiplying the equation by $A^{-1}$ yields the following (equivalent) problem for $u_h$:
\begin{equation} \label{eqn:dgfem_lbte_operator_preconditioned}
    (I-A^{-1}S) u_h = A^{-1} F.
\end{equation}

\noindent By earlier remarks, we recall that $\sigma(I-A^{-1}S) \subset D(a,r)$ with $a=1$ and $r=\sqrt{q_\beta q_\gamma}$; moreover, if $s(\cdot,\cdot)$ is symmetric positive-semidefinite, we have that $\sigma(I-A^{-1}S) \subset D(a,r)$ with $a=1-\frac12\sqrt{q_\beta q_\gamma}$ and $r=\frac12\sqrt{q_\beta q_\gamma}$. Thus, GMRES applied to the preconditioned operator equation \reff{eqn:dgfem_lbte_operator_preconditioned} yields
\begin{equation*}
    \frac{||r_k||_2}{||r_0||_2} \le ||Q||_2 ||Q^{-1}||_2 \left( q_\beta q_\gamma \right)^{\frac{k}{2}}
\end{equation*}

\noindent in the former case and 
\begin{equation*}
    \frac{||r_k||_2}{||r_0||_2} \le ||Q||_2 ||Q^{-1}||_2 \left( \frac{\sqrt{q_\beta q_\gamma}}{2-\sqrt{q_\beta q_\gamma}} \right)^k
\end{equation*}

\noindent in the latter case.

As remarked in \cite{wathen2007preconditioning}, the $\ell_2$-norm of the residual $r_k$ is not always the best norm in which to measure the true solver error in finite element applications. In Section \ref{section:implementation}, we discuss how the operator equation \reff{eqn:general_operator_eqn} may be modified in order for standard implementations of GMRES to solve the minimisation problem \reff{eqn:gmres_minim_problem} in the case where $X=(\fespace,||\cdot||_{L_2(\spaceangleenergydom;\overline{\alpha})})$ denotes a finite element space equipped with a weighted $L_2$-norm $||\cdot||_{L_2(\spaceangleenergydom;\overline{\alpha})}$.

\subsection{Residual-based \emph{a posteriori} solver error estimation}

For any $w_h\in\fespace$, we introduce the \emph{residual functional} $R[w_h]:\fespace\rightarrow\Re$ defined for all $v_h\in\fespace$ by
\begin{equation} \label{eqn:solver_residual_functional}
    R[w_h](v_h) = \ell(v_h) - \left[ a(w_h,v_h) - s(w_h,v_h) \right].
\end{equation}

\noindent We remark that \reff{eqn:solver_residual_functional}, as well as the theorem below, hold in both the poly-energetic or mono-energetic settings with appropriate modifications to the finite element space $\fespace$.

\begin{theorem} \label{thm:general_solver_apost_est}
    Let $\hat{u}_h\in\fespace$ denote an approximation to the exact solution $u_h\in\fespace$ of \reff{eqn:dgfem_lbte_full}. Then the following \emph{a posteriori} error estimate holds:
    \begin{equation}
        ||| u_h - \hat{u}_h |||_{DG} \le \sqrt{\sum_{\kappa\in\mesh} \eta_\kappa(\hat{u}_h)^2} = || \sqrt{\overline{\alpha}} r_h(\hat{u}_h)||_{L_2(\spaceangleenergydom)},
    \end{equation}

    \noindent where $\eta_\kappa(\hat{u}_h) = || \sqrt{\overline{\alpha}} r_{h}(\hat{u}_h) ||_{L_2(\kappa)}$ and $r_{h}(\hat{u}_h)\in\fespace$ denotes the unique solution to the variational problem
    \begin{equation} \label{eqn:apost_error_riesz_residual_fn}
        ( \overline{\alpha} r_h(\hat{u}_h) , v_h )_{L_2(\spaceangleenergydom)} = R[\hat{u}_h](v_h) \quad \forall v_h\in\fespace.
    \end{equation}
\end{theorem}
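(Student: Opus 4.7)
The plan is to combine the coercivity result of Theorem~\ref{thm:DGFEM_bilin_form_coercivity} with a Galerkin-type identity for the residual functional, then close the estimate by Cauchy--Schwarz using the fact that the weighted $L_2$-norm $\|\sqrt{\overline{\alpha}}\,\cdot\,\|_{L_2(\spaceangleenergydom)}$ is dominated by $|||\cdot|||_{DG}$.

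First, I would observe that since $\overline{\alpha}\ge\alpha_0>0$, the bilinear form $(w_h,v_h)\mapsto(\overline{\alpha}w_h,v_h)_{L_2(\spaceangleenergydom)}$ is a symmetric coercive inner product on $\fespace$, so by Riesz representation the Riesz lift $r_h(\hat{u}_h)\in\fespace$ defined by \reff{eqn:apost_error_riesz_residual_fn} exists and is unique. Next, I would exploit the fact that $u_h$ satisfies \reff{eqn:dgfem_lbte_full} to produce the key Galerkin-type identity: for every $v_h\in\fespace$,
\begin{equation*}
a(u_h-\hat{u}_h,v_h)-s(u_h-\hat{u}_h,v_h)=\ell(v_h)-\bigl[a(\hat{u}_h,v_h)-s(\hat{u}_h,v_h)\bigr]=R[\hat{u}_h](v_h).
\end{equation*}

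The second step is to apply Theorem~\ref{thm:DGFEM_bilin_form_coercivity} to $v_h=u_h-\hat{u}_h\in\fespace$, giving
\begin{equation*}
|||u_h-\hat{u}_h|||_{DG}^2\le a(u_h-\hat{u}_h,u_h-\hat{u}_h)-s(u_h-\hat{u}_h,u_h-\hat{u}_h)=R[\hat{u}_h](u_h-\hat{u}_h),
\end{equation*}
and then, via the defining relation \reff{eqn:apost_error_riesz_residual_fn} for $r_h(\hat{u}_h)$, to rewrite the right-hand side as
\begin{equation*}
R[\hat{u}_h](u_h-\hat{u}_h)=\bigl(\overline{\alpha}\,r_h(\hat{u}_h),u_h-\hat{u}_h\bigr)_{L_2(\spaceangleenergydom)}=\bigl(\sqrt{\overline{\alpha}}\,r_h(\hat{u}_h),\sqrt{\overline{\alpha}}(u_h-\hat{u}_h)\bigr)_{L_2(\spaceangleenergydom)}.
\end{equation*}

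The third step is to bound this expression by the Cauchy--Schwarz inequality, producing the factor $\|\sqrt{\overline{\alpha}}\,r_h(\hat{u}_h)\|_{L_2(\spaceangleenergydom)}$ times $\|\sqrt{\overline{\alpha}}(u_h-\hat{u}_h)\|_{L_2(\spaceangleenergydom)}$. The latter factor is dominated by $|||u_h-\hat{u}_h|||_{DG}$ by inspection of the definition of the DGFEM-energy norm (the weighted $L_2$-term appears as a summand, the remaining summand being non-negative). Dividing through by $|||u_h-\hat{u}_h|||_{DG}$ then yields the bound $|||u_h-\hat{u}_h|||_{DG}\le\|\sqrt{\overline{\alpha}}\,r_h(\hat{u}_h)\|_{L_2(\spaceangleenergydom)}$. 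Finally, since the elements of $\mesh$ partition $\spaceangleenergydom$, the $L_2$-norm decomposes additively, giving $\|\sqrt{\overline{\alpha}}\,r_h(\hat{u}_h)\|_{L_2(\spaceangleenergydom)}^2=\sum_{\kappa\in\mesh}\eta_\kappa(\hat{u}_h)^2$, which furnishes the element-wise indicator representation.

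There is no real obstacle here: the argument is essentially the standard residual-equation-plus-coercivity template for \emph{a posteriori} estimation, and the only subtlety is the choice of the weight $\overline{\alpha}$ in the Riesz lift, which is precisely what makes $\|\sqrt{\overline{\alpha}}(u_h-\hat{u}_h)\|_{L_2(\spaceangleenergydom)}\le|||u_h-\hat{u}_h|||_{DG}$ give a clean absorption step rather than requiring an inverse or discrete Poincar\'e-type inequality.
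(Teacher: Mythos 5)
Your proposal is correct and follows essentially the same route as the paper's proof: coercivity of $a(\cdot,\cdot)-s(\cdot,\cdot)$ applied to $u_h-\hat{u}_h$, the Galerkin identity rewriting the right-hand side as $R[\hat{u}_h](u_h-\hat{u}_h)$, the Riesz-lift representation \reff{eqn:apost_error_riesz_residual_fn}, and a weighted Cauchy--Schwarz step absorbing $\|\sqrt{\overline{\alpha}}(u_h-\hat{u}_h)\|_{L_2(\spaceangleenergydom)}$ into $|||u_h-\hat{u}_h|||_{DG}$. The additional remarks on well-posedness of the Riesz lift and the element-wise decomposition are consistent with, and slightly more explicit than, the paper's presentation.
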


\begin{proof}
    We have that
    \begin{align*}
        ||| u_h-\hat{u}_h |||_{DG}^2 &\le a(u_h-\hat{u}_h,u_h-\hat{u}_h) - s(u_h-\hat{u}_h,u_h-\hat{u}_h) \\
        &= \ell(u_h-\hat{u}_h) - \left[ a(\hat{u}_h,u_h-\hat{u}_h) - s(\hat{u}_h,u_h-\hat{u}_h) \right] \\
        &= R[\hat{u}_h](u_h-\hat{u}_h) 
        = (\overline{\alpha} r_h(\hat{u}_h) , u_h-\hat{u}_h)_{L_2(\spaceangleenergydom)} \\
        %
        %&= \sum_{\kappa\in\mesh} (\overline{\alpha} r_h(\hat{u}_h) , u_h-\hat{u}_h)_{L_2(\kappa)} \\
        %
        %&\le \sum_{\kappa\in\mesh} || \sqrt{\overline{\alpha}} r_h(\hat{u}_h) ||_{L_2(\kappa)} \ || \sqrt{\overline{\alpha}} (u_h-\hat{u}_h) ||_{L_2(\kappa)} \\
        %
        %&\le \left( \sum_{\kappa\in\mesh} ||\sqrt{\overline{\alpha}} r_h(\hat{u}_h)||_{L_2(\kappa)}^2 \right)^\frac12 \left( \sum_{\kappa\in\mesh} ||\sqrt{\overline{\alpha}}(u_h-\hat{u}_h)||_{L_2(\kappa)}^2 \right)^\frac12 \\
        %
        &\le \left( \sum_{\kappa\in\mesh} ||\sqrt{\overline{\alpha}} r_h(\hat{u}_h)||_{L_2(\kappa)}^2 \right)^\frac12 ||| u_h-\hat{u}_h |||_{DG},
    \end{align*}

    \noindent as required.
\end{proof}

\begin{remark}
    The computation of the local solver error estimates $\eta_\kappa(\hat{u}_h)$ requires the solution $r_h(\hat{u}_h)$ of \reff{eqn:apost_error_riesz_residual_fn}. This can be achieved via local problems on each $\kappa\in\mesh$ as follows. Let $\fespace_\kappa\subset\fespace$ denote the finite element subspace of piecewise-polynomial functions supported on $\kappa=\spacekappa\times\anglekappa\times\kappa_g$. The local problem for $r_\kappa(\hat{u}_h) := r_h(\hat{u}_h)|_\kappa$ reads as follows: find $r_\kappa(\hat{u}_h)\in\fespace_\kappa$ such that
    \begin{equation*}
        (\overline{\alpha} r_\kappa(\hat{u}_h), v_h)_{L_2(\kappa)} = R_\kappa[\hat{u}_h](v_h)
        \quad \forall v_h\in\fespace_\kappa,
    \end{equation*}

    \noindent  where we note that $R[\hat{u}_h](v_h)$ (with $v_h\in\fespace$) can be written in the form
    \begin{equation*}
        R[\hat{u}_h](v_h) = \sum_{\kappa\in\mesh} R_\kappa[\hat{u}_h](v_h),
    \end{equation*}

    \noindent with each $R_\kappa[\hat{u}_h](v_h)$ given by
    \begin{align*}
        R_\kappa[\hat{u}_h](v_h) &= \int_{\kappa_g} \int_{\anglekappa} \int_{\spacekappa} \left( f + S[\hat{u}_h] - \bm{\mu}\cdot\nabla_\mathbf{x}\hat{u}_h - (\alpha+\beta)\hat{u}_h \right) v_h \d\mathbf{x}\d\bm{\mu}\d E \\
        &\quad\quad + \int_{\kappa_g} \int_{\anglekappa} \int_{\partial_-\spacekappa\setminus\partial\spacedom} |\bm{\mu}\cdot\mathbf{n}| (\hat{u}_h^- - \hat{u}_h^+)v_h^+ \d s\d\bm{\mu}\d E \\
        &\quad\quad + \int_{\kappa_g} \int_{\anglekappa} \int_{\partial_-\spacekappa\cap\partial\spacedom} |\bm{\mu}\cdot\mathbf{n}| (g_D - \hat{u}_h^+)v_h^+ \d s\d\bm{\mu}\d E.
    \end{align*}
\end{remark}

\section{Implementation} \label{section:implementation}

We give a brief overview of the practical implementations of the source iteration, generalised source iteration and GMRES \emph{a posteriori} solver error estimates. The starting point for all three methods is a matrix representation of the DGFEM scheme \reff{eqn:dgfem_lbte_full}. Denoting by $\{\phi_i\}_{i=1}^N\subset\fespace$, $N=\dim\fespace$, a basis of the space-angle (or space-angle-energy) finite element space, we introduce the matrices $\mathbf{A},\mathbf{S},\mathbf{M}\in\mathbb{R}^{N\times N}$ with entries
$(\mathbf{A})_{ij} = a(\phi_j,\phi_i)$, $(\mathbf{S})_{ij} = s(\phi_j,\phi_i)$, $(\mathbf{M})_{ij} = m(\phi_j,\phi_i)$, respectively, and the vector $\mathbf{F}\in\mathbb{R}^{N}$, where $(\mathbf{F})_i=\ell(\phi_i)$. Thereby, \reff{eqn:dgfem_lbte_full} can be recast as the linear equation
\begin{equation} \label{eqn:dgfem_lbte_matrix_form}
    \mathbf{Au} = \mathbf{Su}+\mathbf{F}
\end{equation}

\noindent for the vector of coefficients $\mathbf{u}\in\mathbb{R}^N$ in the expansion of $u_h = \sum_{i=1}^N (\mathbf{u})_i \phi_i$.

Since the dimension of the finite element space $N$ may be very large, it is infeasible to store $\mathbf{A}$, $\mathbf{S}$ and $\mathbf{M}$ in full. We instead opt for matrix-free actions of $\mathbf{A}^{-1}$ and $\mathbf{S}$ on vectors in $\mathbb{R}^N$, cf. \cite{houston2023efficient}, in the case of source iteration, or matrix-free actions of $(\mathbf{A}-\omega\mathbf{M})^{-1}$ and $\mathbf{S}-\omega\mathbf{M}$ on vectors in $\mathbb{R}^{N}$ in the case of generalised source iteration. Since $\mathbf{M}$ is a block-diagonal matrix with each block corresponding to a space-angle element, we remark that the action $(\mathbf{A}-\omega\mathbf{M})^{-1}$ (respectively, $\mathbf{S}-\omega\mathbf{M}$) on a vector is not more computationally challenging than that of $\mathbf{A}^{-1}$ (respectively, $\mathbf{S}$).

\subsection{Source iteration and generalised source iteration}

The source iteration scheme, as well as its generalised variant in the mono-energetic case, can be expressed in the form
\begin{equation} \label{eqn:dgfem_generalised_si_matrix_form}
    \mathbf{u}_{n+1} = \mathbf{P}^{-1}(\mathbf{Ku}_n + \mathbf{F}),
\end{equation}

\noindent where $\mathbf{u}_n$ (respectively, $\mathbf{u}_{n+1}$) denotes the expansion of $u_h^{(n)}$ (respectively, $u_h^{(n+1)}$) in the basis of $\fespace$. The matrices $\mathbf{P},\mathbf{K}\in\mathbb{R}^{N\times N}$ are given, respectively, by $\mathbf{P}=\mathbf{A}$ and $\mathbf{K}=\mathbf{S}$ in the source iteration case and by $\mathbf{P}=\mathbf{A}-\omega\mathbf{M}$ and $\mathbf{K}=\mathbf{S}-\omega\mathbf{M}$ in the generalised source iteration case. It can be shown that, using the initial condition $\mathbf{u}_0=\bm{0}$, the $n^{th}$ iterate can be expressed as
\begin{equation} \label{eqn:matrix_iterates}
    \mathbf{u}_n = \left(\sum_{k=0}^{n-1} \mathbf{G}^k\right)\mathbf{P}^{-1}\mathbf{F},
\end{equation}

\noindent where $\mathbf{G}=\mathbf{P}^{-1}\mathbf{K}$.
It is convenient to recast this iteration as
\begin{align*}
    \mathbf{u}_{n+1} &= \mathbf{u}_n+\mathbf{r}_{n}, \\
    \mathbf{r}_{n+1} &= \mathbf{Gr}_n,
\end{align*}

\noindent with initial conditions $\mathbf{u}_0=\bm{0}$ and $\mathbf{r}_0=\mathbf{P}^{-1}\mathbf{F}$, in which case it is equivalent to \reff{eqn:dgfem_generalised_si_matrix_form}. It can be shown that this iteration generates the same iterates \reff{eqn:matrix_iterates} as well as a sequence of preconditioned residuals $\mathbf{r}_n = \mathbf{P}^{-1}(\mathbf{F}-(\mathbf{A}-\mathbf{S})\mathbf{u}_n)$.

Since the \emph{a posteriori} error estimates of Theorems \ref{thm:dgfem_si_contraction_apost} and \ref{thm:dgfem_general_si_contraction_apost} are of the form
\begin{equation*}
    ||| u_h-u_h^{(n+1)} |||_{DG} \le c || \sqrt{\beta} (u_h^{(n)}-u_h^{(n+1)}) ||_{L_2(\spaceangleenergydom)},
\end{equation*}

\noindent we need only compute the constant $c$ for the given problem and solver and assemble the \emph{a posteriori} error estimate element-wise. Furthermore, since
\begin{equation*}
    u_h^{(n+1)} - u_h^{(n)} = \sum_{i=1}^N (\mathbf{r}_{n})_i \phi_i,
\end{equation*}

\noindent we may write
\begin{equation*}
    || \sqrt{\beta} (u_h^{(n+1)} - u_h^{(n)}) ||_{L_2(\spaceangleenergydom)} = \sqrt{\mathbf{r}_n\cdot\mathbf{Mr}_n}.
\end{equation*}

We briefly discuss the cost per iteration of the method \reff{eqn:dgfem_generalised_si_matrix_form}. The matrices $\mathbf{P}$ and $\mathbf{K}$ may be expressed as block-matrices $\mathbf{P}=(\mathbf{P}_{ij})_{i,j=1}^{M}$ and $\mathbf{K}=(\mathbf{K}_{ij})_{i,j=1}^{M}$, where each $\mathbf{P}_{ij},\mathbf{K}_{ij}\in\mathbb{R}^{\dim\spacefespace\times\dim\spacefespace}$ and $M=\dim\anglefespace$ in the mono-energetic setting or $M=\dim(\anglefespace\otimes\energyfespace)$ in the poly-energetic setting. One typically has $\mathbf{P}_{ij}=\bm{0}$ for $i\ne j$. In the classical multigroup discrete ordinates treatment of the linear Boltzmann transport equation, and in the implementation given in \cite{houston2023efficient}, the on-diagonal matrix blocks $\mathbf{P}_{ii}$ are themselves block-diagonal, with each block corresponding to the system matrix arising from a DGFEM discretisation of a transport problem. For this reason, the action of $\mathbf{P}^{-1}$ is assumed to be computationally inexpensive. On the other hand, the matrix $\mathbf{K}$ is block-dense, owing to the coupling between angle-energy element pairs via \reff{eqn:dgfem_scattering operator}. Thus, the action of $\mathbf{K}$ typically dominates the computational time taken to perform a single iteration of \reff{eqn:dgfem_generalised_si_matrix_form}.

\subsection{GMRES}

We employ a standard implementation of GMRES which accepts (the action of) a matrix $\mathbf{A}$, a right-hand side $\mathbf{b}$ and desired tolerance \verb|TOL|. We denote this method by \verb|gmres|$(\mathbf{A},\mathbf{b},$\verb|TOL|$)$, which returns an approximate solution $\hat{\mathbf{x}}$ to the matrix equation $\mathbf{Ax}=\mathbf{b}$ as well as the corresponding residual norm $||\mathbf{r}||_2=||\mathbf{b}-\mathbf{A}\hat{\mathbf{x}}||_2 \le $\verb|TOL|. For simplicity of presentation, we do not consider using GMRES with restarts, but stress that the implementation of the method remains unchanged in this setting.

By a judicious selection of $\mathbf{A}$ and $\mathbf{b}$, we can restructure the original matrix equation \reff{eqn:dgfem_lbte_matrix_form} in such a way that \verb|gmres| returns the \emph{a posteriori} error estimate in Theorem \ref{thm:general_solver_apost_est}. Moreover, we can do so using any preconditioner $\mathbf{P}\approx\mathbf{A}-\mathbf{S}$ that we have available. Let $\hat{\mathbf{M}}\in\mathbb{R}^{N\times N}$ denote the matrix with entries
\begin{equation*}
    (\hat{\mathbf{M}})_{ij} = \int_\energydom \int_\angledom \int_\spacedom \overline{\alpha}(\mathbf{x},\bm{\mu},E) \phi_i(\mathbf{x},\bm{\mu},E) \phi_j(\mathbf{x},\bm{\mu},E) \ \d\mathbf{x}\d\bm{\mu}\d E.
\end{equation*}

\noindent Moreover, let $\mathbf{L}$ denote the lower-triangular matrix arising in the Cholesky decomposition of $\hat{\mathbf{M}}=\mathbf{LL}^*$. By executing
\begin{align}
    \hat{\mathbf{z}} &= \mbox{\tt gmres}(\mathbf{L}^{-1}(\mathbf{A}-\mathbf{S})\mathbf{P}^{-1}\mathbf{L},\mathbf{L}^{-1}\mathbf{F},\mbox{\tt TOL}), \label{eqn:gmres_apost_est_1} \\
    \hat{\mathbf{u}} &= \mathbf{P}^{-1}\mathbf{L}\hat{\mathbf{z}}, \label{eqn:gmres_apost_est_2}
\end{align}

\noindent we can ensure that $|||u_h-\hat{u}_h|||_{DG} \le $ \verb|TOL|. To see this, note that the first equation yields an auxiliary solution $\hat{\mathbf{z}}$ satisfying
\begin{equation*}
    || \mathbf{L}^{-1} (\mathbf{F}-(\mathbf{A}-\mathbf{S})\mathbf{P}^{-1}\mathbf{L}\hat{\mathbf{z}}) ||_2 \le \textnormal{\tt TOL},
\end{equation*}

\noindent and the second equation tells us that the left-hand-side is equal to $||\mathbf{L}^{-1}\hat{\mathbf{r}}||_2 = \sqrt{\hat{\mathbf{r}}^*\hat{\mathbf{M}}^{-1}\hat{\mathbf{r}}}$, where $\hat{\mathbf{r}}=\mathbf{F}-(\mathbf{A}-\mathbf{S})\hat{\mathbf{u}}$ denotes a residual vector. Finally, we relate the residual vector $\hat{\mathbf{r}}$ to the residual function $r_h(\hat{u}_h)$ appearing in the statement of Theorem \ref{thm:general_solver_apost_est} through the latter's expansion in the basis of $\fespace$:
\begin{equation*}
    r_h(\hat{u}_h) = \sum_{j=1}^N (\mathbf{r})_j \phi_j.
\end{equation*}

\noindent Using this expansion of $r_h(\hat{u}_h)$ and setting $v_h=\phi_i$ for $1\le i\le N$ in \reff{eqn:apost_error_riesz_residual_fn} yields the following equation:
\begin{equation*}
    \hat{\mathbf{M}}\mathbf{r} = \hat{\mathbf{r}}.
\end{equation*}

\noindent Thus,
\begin{equation*}
    \sqrt{\hat{\mathbf{r}}^*\hat{\mathbf{M}}^{-1}\hat{\mathbf{r}}} = \sqrt{\mathbf{r}^*\hat{\mathbf{M}}\mathbf{r}} = || \sqrt{\overline{\alpha}} r_h(\hat{u}_h) ||_{L_2(\spaceangleenergydom)},
\end{equation*}

\noindent which is precisely the \emph{a posteriori} error estimator of Theorem \ref{thm:general_solver_apost_est}.

For simplicity, we consider the choice of preconditioner $\mathbf{P}=\mathbf{A}$; i.e. the transport component of the original system matrix in \reff{eqn:dgfem_lbte_matrix_form}. This allows us to write the preconditioned system matrix passed to \verb|gmres| as $\mathbf{L}^{-1}(\mathbf{I}-\mathbf{SA}^{-1})\mathbf{L}$. In addition to the computation of the actions of $\mathbf{A}^{-1}$ and $\mathbf{S}$ on a vector as outlined before for source iteration, the GMRES scheme \reff{eqn:gmres_apost_est_1}-\reff{eqn:gmres_apost_est_2} additionally requires the computation of the Cholesky decomposition of a (possibly very large) matrix $\hat{\mathbf{M}}$, which in principle could be very computationally costly. However, if one employs an orthogonal basis $\{\phi_i\}_{i=1}^N$ with respect to the $\overline{\alpha}$-weighted $L_2(\spaceangleenergydom)$-inner product, both $\hat{\mathbf{M}}$ and $\mathbf{L}$ become diagonal matrices, the latter of which has diagonal entries $(\mathbf{L})_{ii}=||\sqrt{\overline{\alpha}}\phi_i||_{L_2(\spaceangleenergydom)}$. For this reason, the actions of $\mathbf{L}$ and $\mathbf{L}^{-1}$ at each GMRES iteration are assumed to be slightly faster than that of $\mathbf{A}^{-1}$ and significantly faster than that of $\mathbf{S}$. Thus, the cost of evaluating the action of the matrix $\mathbf{L}^{-1}(\mathbf{A}-\mathbf{S})\mathbf{A}^{-1}\mathbf{L}=\mathbf{L}^{-1}(\mathbf{I}-\mathbf{SA}^{-1})\mathbf{L}$ on a vector is comparable to a single step of source iteration, which performs the action of $\mathbf{A}^{-1}\mathbf{S}$ on a vector.

Finally, we consider the cost of orthogonalisation in the GMRES algorithm. It is expected that the cost of orthogonalising one vector against another is comparable to that of applying $\mathbf{L}$ or $\mathbf{L}^{-1}$ to a vector. However, the number of orthogonalisations performed at iteration $n$ of GMRES is equal to $n$. Thus, provided that the dimension of the largest Krylov subspace generated is not too large, we still expect that the action of $\mathbf{S}$ at each iteration is the most computationally-expensive operation in the scheme. We conclude that a single step of GMRES is thus comparable in terms of computational cost to a single step of source iteration in the sense that the additional cost of vector orthogonalisation, as well as the actions of $\mathbf{L}$ and $\mathbf{L}^{-1}$, are small compared to those of $\mathbf{A}^{-1}$ and $\mathbf{S}$ which are required by both algorithms.

We briefly remark on the importance of the factor $\mathbf{L}^{-1}$ in the \emph{a posteriori} error estimator $||\mathbf{L}^{-1}\hat{\mathbf{r}}||_2 = ||\sqrt{\overline{\alpha}}r_h(\hat{u}_h)||_{L_2(\spaceangleenergydom)}$. As noted earlier, equations \reff{eqn:gmres_apost_est_1}-\reff{eqn:gmres_apost_est_2}, generate a discrete solution $\hat{\mathbf{u}}$ and thus a corresponding finite element function $\hat{u}_h$ for which $|||u_h-\hat{u}_h|||\le$ \verb|TOL|. Observing that $\mathbf{L}=\textnormal{diag}(||\sqrt{\overline{\alpha}}\phi_i||_{L_2(\spaceangleenergydom)})$, it can be shown that
\begin{equation} \label{eqn:upper_lower_bnds_resvec}
    \min_i ||\sqrt{\overline{\alpha}}\phi_i||_{L_2(\spaceangleenergydom)} \le \frac{||\hat{\mathbf{r}}||_2}{||\sqrt{\overline{\alpha}}r_h(\hat{u}_h)||_{L_2(\spaceangleenergydom)}} \le \max_i ||\sqrt{\overline{\alpha}}\phi_i||_{L_2(\spaceangleenergydom)},
\end{equation}

\noindent Thus, if \reff{eqn:gmres_apost_est_1}-\reff{eqn:gmres_apost_est_2} are executed without including $\mathbf{L}$ and $\mathbf{L}^{-1}$ (or equivalently if one sets $\mathbf{L}=\mathbf{I}$) and the \verb|gmres| call in equation \reff{eqn:gmres_apost_est_1} returns an approximation $\hat{\mathbf{z}}$ such that $||\hat{\mathbf{r}}||_2\le$ \verb|TOL|, then the true DGFEM-energy-norm error actually satisfies
\begin{equation*}
    |||u_h-\hat{u}_h|||_{DG} \le \frac{\mbox{\tt TOL}}{\min_i ||\sqrt{\overline{\alpha}}\phi_i||_{L_2(\spaceangleenergydom)}}.
\end{equation*}

\noindent Furthermore, if the diagonal elements of $\mathbf{L}$ vary in magnitude - for instance, if $\mesh$ is highly non-uniform or if $\overline{\alpha}$ is highly anisotropic - the inequalities \reff{eqn:upper_lower_bnds_resvec} may not be sharp. Thus, the significance of the matrix $\mathbf{L}$ is to weight the components of the residual vector $\hat{\mathbf{r}}$ in such a way that $|||u_h-\hat{u}_h|||_{DG}$ is bounded above by the statement in Theorem \ref{thm:general_solver_apost_est}.

\section{Numerical Results} \label{section:numerics}

\subsection{Mono-energetic benchmark}

We consider the application of mono-energetic source iteration, generalised source iteration and right-preconditioned GMRES for the numerical solution of the DGFEM problem to a benchmark posed in two spatial dimensions and one angular dimension. The focus of this benchmark is to investigate the behaviour of all three methods with respect to changes in discretisation parameters such as mesh granularity and polynomial degree of the approximation, as well as changes in problem parameters such as the domain size and optical thickness, which we take as the product $(\alpha+\beta)h$ between the spatial mesh size parameter and the macroscopic total cross-section.

We employ spatial domains $(0,L)^2$ (in dimensionless units) with $L\in\{\frac{1}{10},1,10\}$ and macroscopic total cross-sections $\sigma=\alpha+\beta$ with $\sigma\in\{\frac{1}{10},1,10\}$. The macroscopic absorption and scattering cross-sections are taken to be the globally-constant functions $\alpha=(1-c)\sigma$ and $\beta=c\sigma$, respectively, where $c\in\{\frac{1}{10},\frac{3}{10},\frac{5}{10},\frac{7}{10},\frac{9}{10}\}$ denotes the scattering ratio $c=\frac{\beta}{\alpha+\beta}$. The differential scattering cross-section is taken to be $\theta = \frac{\beta}{|\angledom|}$, which models isotropic scattering. For each test problem, the forcing data $f$ and the boundary data $g_D$ are chosen such that the analytical solution to \reff{eqn:lbte_exact} is given by
\begin{equation*}
    u(\mathbf{x},\bm{\mu}) = \textnormal{exp}\left( -(\mathbf{x}\cdot\bm{\mu})^2 \right).
\end{equation*}

A sequence of space-angle meshes $\mesh=\spacemesh\times\anglemesh$ is employed, consisting of \linebreak$|\spacemesh|\in\{ 4,16,64,256 \}$ spatial elements and $|\anglemesh|\in\{ 8,16,32,64 \}$ curved angular elements (constructed as in Section \ref{section:discretisation}). Each space-angle element $\kappa\in\spacekappa\times\anglekappa\in\mesh$ is endowed with a finite element space $\mathbb{P}_p(\spacekappa)\times\mathbb{Q}_q(\anglekappa)$ employing a uniform polynomial degree $p=q\in\{0,1,2\}$. The total number of degrees of freedom of the finite element space $\fespace=\spacefespace\otimes\anglefespace$ varies from $|\fespace|=32$ for the coarsest discretisation to $|\fespace|=294912$ for the finest discretisation.

The \emph{a posteriori} error estimates of Theorems \ref{thm:dgfem_si_contraction_apost} and \ref{thm:dgfem_general_si_contraction_apost} are applied for the standard and generalised versions of source iterations, whereas the residual-based \emph{a posteriori} error estimate of Theorem \ref{thm:general_solver_apost_est} is applied for the right-preconditioned GMRES method. In the case of generalised source iteration, the optimal relaxation parameter $\omega=\frac12$, which minimises the contraction factor in the statement of Theorem \ref{thm:dgfem_general_si_contraction_apost}, is employed.

\subsubsection{Independence on discretisation parameters}

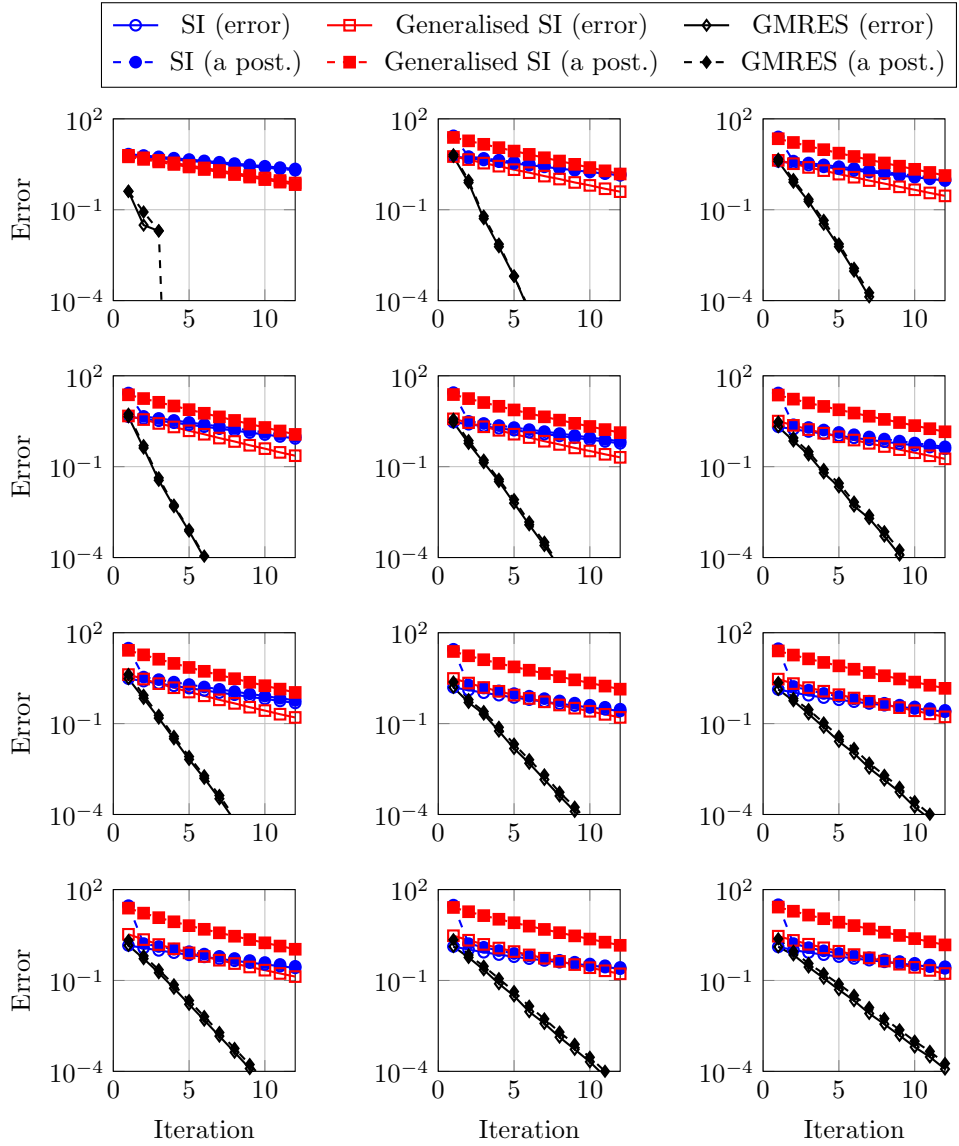
\begin{figure}[h!]
\centering
\begin{tikzpicture}
	% Plot SI vs MSI vs GMRES convergence results for each (h,p) pair
	\begin{groupplot}[group style={group size= 3 by 4, horizontal sep=1.9cm, vertical sep=1cm}, height=4cm, width=4cm]
		% (ref,p) = (1,0).
		\nextgroupplot[xmode=linear, xmin=0, xmax=12, xlabel=,
					   ymode=log, log basis y={10}, ymin=1e-4, ymax=1e2, ylabel=Error,
					   axis background/.style={fill=gray!0}, 
					   legend pos=north west,
					   grid=both, grid style={line width=.1pt, draw=gray!10}, major grid style={line width=.2pt,draw=gray!50}]
			
			% source_iteration plots.
			\addplot+[mark=o, thick, solid, blue, mark options={blue, solid}] table [x=krylov_dim, y=dg_error_solver, col sep=comma] {deal_II_results/monoenergetic_benchmark/source_iteration/c9/a1_l1/monoenergetic_apost_si_2D_errors_p0_n1};
			\label{plots:monoenergetic_benchmark_disc_param_indep_si_err}
			\addplot+[mark=*, thick, dashed, blue, mark options={blue, solid}] table [x=krylov_dim, y=solver_apost_estimate, col sep=comma] {deal_II_results/monoenergetic_benchmark/source_iteration/c9/a1_l1/monoenergetic_apost_si_2D_errors_p0_n1}; 
			\label{plots:monoenergetic_benchmark_disc_param_indep_si_est}
			
			% modified_source_iteration plots.
			\addplot+[mark=square, thick, solid, red, mark options={red, solid}] table [x=krylov_dim, y=dg_error_solver, col sep=comma] {deal_II_results/monoenergetic_benchmark/modified_source_iteration/c9/a1_l1/monoenergetic_apost_msi_2D_errors_p0_n1};
			\label{plots:monoenergetic_benchmark_disc_param_indep_msi_err}
			\addplot+[mark=square*, thick, dashed, red, mark options={red, solid}] table [x=krylov_dim, y=solver_apost_estimate, col sep=comma] {deal_II_results/monoenergetic_benchmark/modified_source_iteration/c9/a1_l1/monoenergetic_apost_msi_2D_errors_p0_n1}; 
			\label{plots:monoenergetic_benchmark_disc_param_indep_msi_est}
			
			% GMRES plots.
			\addplot+[mark=diamond, thick, solid, black, mark options={black, solid}] table [x=krylov_dim, y=dg_error_solver, col sep=comma] {deal_II_results/monoenergetic_benchmark/gmres/c9/a1_l1/monoenergetic_apost_gmres_2D_errors_p0_n1};
			\label{plots:monoenergetic_benchmark_disc_param_indep_gmres_err}
			\addplot+[mark=diamond*, thick, dashed, black, mark options={black, solid}] table [x=krylov_dim, y=solver_apost_estimate, col sep=comma] {deal_II_results/monoenergetic_benchmark/gmres/c9/a1_l1/monoenergetic_apost_gmres_2D_errors_p0_n1}; 
			\label{plots:monoenergetic_benchmark_disc_param_indep_gmres_est}

			% Place coordinate for first figure.			
			\coordinate (top) at (rel axis cs:0,1); % coordinate at top of the first plot
			
		% (ref,p) = (1,1).
		\nextgroupplot[xmode=linear, xmin=0, xmax=12, xlabel=,
					   ymode=log, log basis y={10}, ymin=1e-4, ymax=1e2, ylabel=,
					   axis background/.style={fill=gray!0}, 
					   legend pos=north west,
					   grid=both, grid style={line width=.1pt, draw=gray!10}, major grid style={line width=.2pt,draw=gray!50}]
			
			% source_iteration plots.
			\addplot+[mark=o, thick, solid, blue, mark options={blue, solid}] table [x=krylov_dim, y=dg_error_solver, col sep=comma] {deal_II_results/monoenergetic_benchmark/source_iteration/c9/a1_l1/monoenergetic_apost_si_2D_errors_p1_n1};
			\addplot+[mark=*, thick, dashed, blue, mark options={blue, solid}] table [x=krylov_dim, y=solver_apost_estimate, col sep=comma] {deal_II_results/monoenergetic_benchmark/source_iteration/c9/a1_l1/monoenergetic_apost_si_2D_errors_p1_n1}; 
			
			% modified_source_iteration plots.
			\addplot+[mark=square, thick, solid, red, mark options={red, solid}] table [x=krylov_dim, y=dg_error_solver, col sep=comma] {deal_II_results/monoenergetic_benchmark/modified_source_iteration/c9/a1_l1/monoenergetic_apost_msi_2D_errors_p1_n1};
			\addplot+[mark=square*, thick, dashed, red, mark options={red, solid}] table [x=krylov_dim, y=solver_apost_estimate, col sep=comma] {deal_II_results/monoenergetic_benchmark/modified_source_iteration/c9/a1_l1/monoenergetic_apost_msi_2D_errors_p1_n1}; 
			
			% GMRES plots.
			\addplot+[mark=diamond, thick, solid, black, mark options={black, solid}] table [x=krylov_dim, y=dg_error_solver, col sep=comma] {deal_II_results/monoenergetic_benchmark/gmres/c9/a1_l1/monoenergetic_apost_gmres_2D_errors_p1_n1};
			\addplot+[mark=diamond*, thick, dashed, black, mark options={black, solid}] table [x=krylov_dim, y=solver_apost_estimate, col sep=comma] {deal_II_results/monoenergetic_benchmark/gmres/c9/a1_l1/monoenergetic_apost_gmres_2D_errors_p1_n1}; 
					   
		% (ref,p) = (1,2).
		\nextgroupplot[xmode=linear, xmin=0, xmax=12, xlabel=,
					   ymode=log, log basis y={10}, ymin=1e-4, ymax=1e2, ylabel=,
					   axis background/.style={fill=gray!0}, 
					   legend pos=north west,
					   grid=both, grid style={line width=.1pt, draw=gray!10}, major grid style={line width=.2pt,draw=gray!50}]
			
			% source_iteration plots.
			\addplot+[mark=o, thick, solid, blue, mark options={blue, solid}] table [x=krylov_dim, y=dg_error_solver, col sep=comma] {deal_II_results/monoenergetic_benchmark/source_iteration/c9/a1_l1/monoenergetic_apost_si_2D_errors_p2_n1};
			\addplot+[mark=*, thick, dashed, blue, mark options={blue, solid}] table [x=krylov_dim, y=solver_apost_estimate, col sep=comma] {deal_II_results/monoenergetic_benchmark/source_iteration/c9/a1_l1/monoenergetic_apost_si_2D_errors_p2_n1}; 
			
			% modified_source_iteration plots.
			\addplot+[mark=square, thick, solid, red, mark options={red, solid}] table [x=krylov_dim, y=dg_error_solver, col sep=comma] {deal_II_results/monoenergetic_benchmark/modified_source_iteration/c9/a1_l1/monoenergetic_apost_msi_2D_errors_p2_n1};
			\addplot+[mark=square*, thick, dashed, red, mark options={red, solid}] table [x=krylov_dim, y=solver_apost_estimate, col sep=comma] {deal_II_results/monoenergetic_benchmark/modified_source_iteration/c9/a1_l1/monoenergetic_apost_msi_2D_errors_p2_n1}; 
			
			% GMRES plots.
			\addplot+[mark=diamond, thick, solid, black, mark options={black, solid}] table [x=krylov_dim, y=dg_error_solver, col sep=comma] {deal_II_results/monoenergetic_benchmark/gmres/c9/a1_l1/monoenergetic_apost_gmres_2D_errors_p2_n1};
			\addplot+[mark=diamond*, thick, dashed, black, mark options={black, solid}] table [x=krylov_dim, y=solver_apost_estimate, col sep=comma] {deal_II_results/monoenergetic_benchmark/gmres/c9/a1_l1/monoenergetic_apost_gmres_2D_errors_p2_n1}; 
					   
		% (ref,p) = (2,0).
		\nextgroupplot[xmode=linear, xmin=0, xmax=12, xlabel=,
					   ymode=log, log basis y={10}, ymin=1e-4, ymax=1e2, ylabel=Error,
					   axis background/.style={fill=gray!0}, 
					   legend pos=north west,
					   grid=both, grid style={line width=.1pt, draw=gray!10}, major grid style={line width=.2pt,draw=gray!50}]
			
			% source_iteration plots.
			\addplot+[mark=o, thick, solid, blue, mark options={blue, solid}] table [x=krylov_dim, y=dg_error_solver, col sep=comma] {deal_II_results/monoenergetic_benchmark/source_iteration/c9/a1_l1/monoenergetic_apost_si_2D_errors_p0_n2};
			\addplot+[mark=*, thick, dashed, blue, mark options={blue, solid}] table [x=krylov_dim, y=solver_apost_estimate, col sep=comma] {deal_II_results/monoenergetic_benchmark/source_iteration/c9/a1_l1/monoenergetic_apost_si_2D_errors_p0_n2}; 
			
			% modified_source_iteration plots.
			\addplot+[mark=square, thick, solid, red, mark options={red, solid}] table [x=krylov_dim, y=dg_error_solver, col sep=comma] {deal_II_results/monoenergetic_benchmark/modified_source_iteration/c9/a1_l1/monoenergetic_apost_msi_2D_errors_p0_n2};
			\addplot+[mark=square*, thick, dashed, red, mark options={red, solid}] table [x=krylov_dim, y=solver_apost_estimate, col sep=comma] {deal_II_results/monoenergetic_benchmark/modified_source_iteration/c9/a1_l1/monoenergetic_apost_msi_2D_errors_p0_n2}; 
			
			% GMRES plots.
			\addplot+[mark=diamond, thick, solid, black, mark options={black, solid}] table [x=krylov_dim, y=dg_error_solver, col sep=comma] {deal_II_results/monoenergetic_benchmark/gmres/c9/a1_l1/monoenergetic_apost_gmres_2D_errors_p0_n2};
			\addplot+[mark=diamond*, thick, dashed, black, mark options={black, solid}] table [x=krylov_dim, y=solver_apost_estimate, col sep=comma] {deal_II_results/monoenergetic_benchmark/gmres/c9/a1_l1/monoenergetic_apost_gmres_2D_errors_p0_n2};  
					   
		% (ref,p) = (2,1).
		\nextgroupplot[xmode=linear, xmin=0, xmax=12, xlabel=,
					   ymode=log, log basis y={10}, ymin=1e-4, ymax=1e2, ylabel=,
					   axis background/.style={fill=gray!0}, 
					   legend pos=north west,
					   grid=both, grid style={line width=.1pt, draw=gray!10}, major grid style={line width=.2pt,draw=gray!50}]
			
			% source_iteration plots.
			\addplot+[mark=o, thick, solid, blue, mark options={blue, solid}] table [x=krylov_dim, y=dg_error_solver, col sep=comma] {deal_II_results/monoenergetic_benchmark/source_iteration/c9/a1_l1/monoenergetic_apost_si_2D_errors_p1_n2};
			\addplot+[mark=*, thick, dashed, blue, mark options={blue, solid}] table [x=krylov_dim, y=solver_apost_estimate, col sep=comma] {deal_II_results/monoenergetic_benchmark/source_iteration/c9/a1_l1/monoenergetic_apost_si_2D_errors_p1_n2}; 
			
			% modified_source_iteration plots.
			\addplot+[mark=square, thick, solid, red, mark options={red, solid}] table [x=krylov_dim, y=dg_error_solver, col sep=comma] {deal_II_results/monoenergetic_benchmark/modified_source_iteration/c9/a1_l1/monoenergetic_apost_msi_2D_errors_p1_n2};
			\addplot+[mark=square*, thick, dashed, red, mark options={red, solid}] table [x=krylov_dim, y=solver_apost_estimate, col sep=comma] {deal_II_results/monoenergetic_benchmark/modified_source_iteration/c9/a1_l1/monoenergetic_apost_msi_2D_errors_p1_n2}; 
			
			% GMRES plots.
			\addplot+[mark=diamond, thick, solid, black, mark options={black, solid}] table [x=krylov_dim, y=dg_error_solver, col sep=comma] {deal_II_results/monoenergetic_benchmark/gmres/c9/a1_l1/monoenergetic_apost_gmres_2D_errors_p1_n2};
			\addplot+[mark=diamond*, thick, dashed, black, mark options={black, solid}] table [x=krylov_dim, y=solver_apost_estimate, col sep=comma] {deal_II_results/monoenergetic_benchmark/gmres/c9/a1_l1/monoenergetic_apost_gmres_2D_errors_p1_n2}; 
					   
		% (ref,p) = (2,2).
		\nextgroupplot[xmode=linear, xmin=0, xmax=12, xlabel=,
					   ymode=log, log basis y={10}, ymin=1e-4, ymax=1e2, ylabel=,
					   axis background/.style={fill=gray!0}, 
					   legend pos=north west,
					   grid=both, grid style={line width=.1pt, draw=gray!10}, major grid style={line width=.2pt,draw=gray!50}]
			
			% source_iteration plots.
			\addplot+[mark=o, thick, solid, blue, mark options={blue, solid}] table [x=krylov_dim, y=dg_error_solver, col sep=comma] {deal_II_results/monoenergetic_benchmark/source_iteration/c9/a1_l1/monoenergetic_apost_si_2D_errors_p2_n2};
			\addplot+[mark=*, thick, dashed, blue, mark options={blue, solid}] table [x=krylov_dim, y=solver_apost_estimate, col sep=comma] {deal_II_results/monoenergetic_benchmark/source_iteration/c9/a1_l1/monoenergetic_apost_si_2D_errors_p2_n2}; 
			
			% modified_source_iteration plots.
			\addplot+[mark=square, thick, solid, red, mark options={red, solid}] table [x=krylov_dim, y=dg_error_solver, col sep=comma] {deal_II_results/monoenergetic_benchmark/modified_source_iteration/c9/a1_l1/monoenergetic_apost_msi_2D_errors_p2_n2};
			\addplot+[mark=square*, thick, dashed, red, mark options={red, solid}] table [x=krylov_dim, y=solver_apost_estimate, col sep=comma] {deal_II_results/monoenergetic_benchmark/modified_source_iteration/c9/a1_l1/monoenergetic_apost_msi_2D_errors_p2_n2}; 
			
			% GMRES plots.
			\addplot+[mark=diamond, thick, solid, black, mark options={black, solid}] table [x=krylov_dim, y=dg_error_solver, col sep=comma] {deal_II_results/monoenergetic_benchmark/gmres/c9/a1_l1/monoenergetic_apost_gmres_2D_errors_p2_n2};
			\addplot+[mark=diamond*, thick, dashed, black, mark options={black, solid}] table [x=krylov_dim, y=solver_apost_estimate, col sep=comma] {deal_II_results/monoenergetic_benchmark/gmres/c9/a1_l1/monoenergetic_apost_gmres_2D_errors_p2_n2}; 
					   
		% (ref,p) = (3,0).
		\nextgroupplot[xmode=linear, xmin=0, xmax=12, xlabel=,
					   ymode=log, log basis y={10}, ymin=1e-4, ymax=1e2, ylabel=Error,
					   axis background/.style={fill=gray!0}, 
					   legend pos=north west,
					   grid=both, grid style={line width=.1pt, draw=gray!10}, major grid style={line width=.2pt,draw=gray!50}]
			
			% source_iteration plots.
			\addplot+[mark=o, thick, solid, blue, mark options={blue, solid}] table [x=krylov_dim, y=dg_error_solver, col sep=comma] {deal_II_results/monoenergetic_benchmark/source_iteration/c9/a1_l1/monoenergetic_apost_si_2D_errors_p0_n3};
			\addplot+[mark=*, thick, dashed, blue, mark options={blue, solid}] table [x=krylov_dim, y=solver_apost_estimate, col sep=comma] {deal_II_results/monoenergetic_benchmark/source_iteration/c9/a1_l1/monoenergetic_apost_si_2D_errors_p0_n3}; 
			
			% modified_source_iteration plots.
			\addplot+[mark=square, thick, solid, red, mark options={red, solid}] table [x=krylov_dim, y=dg_error_solver, col sep=comma] {deal_II_results/monoenergetic_benchmark/modified_source_iteration/c9/a1_l1/monoenergetic_apost_msi_2D_errors_p0_n3};
			\addplot+[mark=square*, thick, dashed, red, mark options={red, solid}] table [x=krylov_dim, y=solver_apost_estimate, col sep=comma] {deal_II_results/monoenergetic_benchmark/modified_source_iteration/c9/a1_l1/monoenergetic_apost_msi_2D_errors_p0_n3}; 
			
			% GMRES plots.
			\addplot+[mark=diamond, thick, solid, black, mark options={black, solid}] table [x=krylov_dim, y=dg_error_solver, col sep=comma] {deal_II_results/monoenergetic_benchmark/gmres/c9/a1_l1/monoenergetic_apost_gmres_2D_errors_p0_n3};
			\addplot+[mark=diamond*, thick, dashed, black, mark options={black, solid}] table [x=krylov_dim, y=solver_apost_estimate, col sep=comma] {deal_II_results/monoenergetic_benchmark/gmres/c9/a1_l1/monoenergetic_apost_gmres_2D_errors_p0_n3}; 
					   
		% (ref,p) = (3,1).
		\nextgroupplot[xmode=linear, xmin=0, xmax=12, xlabel=,
					   ymode=log, log basis y={10}, ymin=1e-4, ymax=1e2, ylabel=,
					   axis background/.style={fill=gray!0}, 
					   legend pos=north west,
					   grid=both, grid style={line width=.1pt, draw=gray!10}, major grid style={line width=.2pt,draw=gray!50}]
			
			% source_iteration plots.
			\addplot+[mark=o, thick, solid, blue, mark options={blue, solid}] table [x=krylov_dim, y=dg_error_solver, col sep=comma] {deal_II_results/monoenergetic_benchmark/source_iteration/c9/a1_l1/monoenergetic_apost_si_2D_errors_p1_n3};
			\addplot+[mark=*, thick, dashed, blue, mark options={blue, solid}] table [x=krylov_dim, y=solver_apost_estimate, col sep=comma] {deal_II_results/monoenergetic_benchmark/source_iteration/c9/a1_l1/monoenergetic_apost_si_2D_errors_p1_n3}; 
			
			% modified_source_iteration plots.
			\addplot+[mark=square, thick, solid, red, mark options={red, solid}] table [x=krylov_dim, y=dg_error_solver, col sep=comma] {deal_II_results/monoenergetic_benchmark/modified_source_iteration/c9/a1_l1/monoenergetic_apost_msi_2D_errors_p1_n3};
			\addplot+[mark=square*, thick, dashed, red, mark options={red, solid}] table [x=krylov_dim, y=solver_apost_estimate, col sep=comma] {deal_II_results/monoenergetic_benchmark/modified_source_iteration/c9/a1_l1/monoenergetic_apost_msi_2D_errors_p1_n3}; 
			
			% GMRES plots.
			\addplot+[mark=diamond, thick, solid, black, mark options={black, solid}] table [x=krylov_dim, y=dg_error_solver, col sep=comma] {deal_II_results/monoenergetic_benchmark/gmres/c9/a1_l1/monoenergetic_apost_gmres_2D_errors_p1_n3};
			\addplot+[mark=diamond*, thick, dashed, black, mark options={black, solid}] table [x=krylov_dim, y=solver_apost_estimate, col sep=comma] {deal_II_results/monoenergetic_benchmark/gmres/c9/a1_l1/monoenergetic_apost_gmres_2D_errors_p1_n3}; 
					   
		% (ref,p) = (3,2).
		\nextgroupplot[xmode=linear, xmin=0, xmax=12, xlabel=,
					   ymode=log, log basis y={10}, ymin=1e-4, ymax=1e2, ylabel=,
					   axis background/.style={fill=gray!0}, 
					   legend pos=north west,
					   grid=both, grid style={line width=.1pt, draw=gray!10}, major grid style={line width=.2pt,draw=gray!50}]
			
			% source_iteration plots.
			\addplot+[mark=o, thick, solid, blue, mark options={blue, solid}] table [x=krylov_dim, y=dg_error_solver, col sep=comma] {deal_II_results/monoenergetic_benchmark/source_iteration/c9/a1_l1/monoenergetic_apost_si_2D_errors_p2_n3};
			\addplot+[mark=*, thick, dashed, blue, mark options={blue, solid}] table [x=krylov_dim, y=solver_apost_estimate, col sep=comma] {deal_II_results/monoenergetic_benchmark/source_iteration/c9/a1_l1/monoenergetic_apost_si_2D_errors_p2_n3}; 
			
			% modified_source_iteration plots.
			\addplot+[mark=square, thick, solid, red, mark options={red, solid}] table [x=krylov_dim, y=dg_error_solver, col sep=comma] {deal_II_results/monoenergetic_benchmark/modified_source_iteration/c9/a1_l1/monoenergetic_apost_msi_2D_errors_p2_n3};
			\addplot+[mark=square*, thick, dashed, red, mark options={red, solid}] table [x=krylov_dim, y=solver_apost_estimate, col sep=comma] {deal_II_results/monoenergetic_benchmark/modified_source_iteration/c9/a1_l1/monoenergetic_apost_msi_2D_errors_p2_n3}; 
			
			% GMRES plots.
			\addplot+[mark=diamond, thick, solid, black, mark options={black, solid}] table [x=krylov_dim, y=dg_error_solver, col sep=comma] {deal_II_results/monoenergetic_benchmark/gmres/c9/a1_l1/monoenergetic_apost_gmres_2D_errors_p2_n3};
			\addplot+[mark=diamond*, thick, dashed, black, mark options={black, solid}] table [x=krylov_dim, y=solver_apost_estimate, col sep=comma] {deal_II_results/monoenergetic_benchmark/gmres/c9/a1_l1/monoenergetic_apost_gmres_2D_errors_p2_n3}; 
					   
		% (ref,p) = (4,0).
		\nextgroupplot[xmode=linear, xmin=0, xmax=12, xlabel=Iteration,
					   ymode=log, log basis y={10}, ymin=1e-4, ymax=1e2, ylabel=Error,
					   axis background/.style={fill=gray!0}, 
					   legend pos=north west,
					   grid=both, grid style={line width=.1pt, draw=gray!10}, major grid style={line width=.2pt,draw=gray!50}]
			
			% source_iteration plots.
			\addplot+[mark=o, thick, solid, blue, mark options={blue, solid}] table [x=krylov_dim, y=dg_error_solver, col sep=comma] {deal_II_results/monoenergetic_benchmark/source_iteration/c9/a1_l1/monoenergetic_apost_si_2D_errors_p0_n4};
			\addplot+[mark=*, thick, dashed, blue, mark options={blue, solid}] table [x=krylov_dim, y=solver_apost_estimate, col sep=comma] {deal_II_results/monoenergetic_benchmark/source_iteration/c9/a1_l1/monoenergetic_apost_si_2D_errors_p0_n4}; 
			
			% modified_source_iteration plots.
			\addplot+[mark=square, thick, solid, red, mark options={red, solid}] table [x=krylov_dim, y=dg_error_solver, col sep=comma] {deal_II_results/monoenergetic_benchmark/modified_source_iteration/c9/a1_l1/monoenergetic_apost_msi_2D_errors_p0_n4};
			\addplot+[mark=square*, thick, dashed, red, mark options={red, solid}] table [x=krylov_dim, y=solver_apost_estimate, col sep=comma] {deal_II_results/monoenergetic_benchmark/modified_source_iteration/c9/a1_l1/monoenergetic_apost_msi_2D_errors_p0_n4}; 
			
			% GMRES plots.
			\addplot+[mark=diamond, thick, solid, black, mark options={black, solid}] table [x=krylov_dim, y=dg_error_solver, col sep=comma] {deal_II_results/monoenergetic_benchmark/gmres/c9/a1_l1/monoenergetic_apost_gmres_2D_errors_p0_n4};
			\addplot+[mark=diamond*, thick, dashed, black, mark options={black, solid}] table [x=krylov_dim, y=solver_apost_estimate, col sep=comma] {deal_II_results/monoenergetic_benchmark/gmres/c9/a1_l1/monoenergetic_apost_gmres_2D_errors_p0_n4}; 
					   
		% (ref,p) = (4,1).
		\nextgroupplot[xmode=linear, xmin=0, xmax=12, xlabel=Iteration,
					   ymode=log, log basis y={10}, ymin=1e-4, ymax=1e2, ylabel=,
					   axis background/.style={fill=gray!0}, 
					   legend pos=north west,
					   grid=both, grid style={line width=.1pt, draw=gray!10}, major grid style={line width=.2pt,draw=gray!50}]
			
			% source_iteration plots.
			\addplot+[mark=o, thick, solid, blue, mark options={blue, solid}] table [x=krylov_dim, y=dg_error_solver, col sep=comma] {deal_II_results/monoenergetic_benchmark/source_iteration/c9/a1_l1/monoenergetic_apost_si_2D_errors_p1_n4};
			\addplot+[mark=*, thick, dashed, blue, mark options={blue, solid}] table [x=krylov_dim, y=solver_apost_estimate, col sep=comma] {deal_II_results/monoenergetic_benchmark/source_iteration/c9/a1_l1/monoenergetic_apost_si_2D_errors_p1_n4}; 
			
			% modified_source_iteration plots.
			\addplot+[mark=square, thick, solid, red, mark options={red, solid}] table [x=krylov_dim, y=dg_error_solver, col sep=comma] {deal_II_results/monoenergetic_benchmark/modified_source_iteration/c9/a1_l1/monoenergetic_apost_msi_2D_errors_p1_n4};
			\addplot+[mark=square*, thick, dashed, red, mark options={red, solid}] table [x=krylov_dim, y=solver_apost_estimate, col sep=comma] {deal_II_results/monoenergetic_benchmark/modified_source_iteration/c9/a1_l1/monoenergetic_apost_msi_2D_errors_p1_n4}; 
			
			% GMRES plots.
			\addplot+[mark=diamond, thick, solid, black, mark options={black, solid}] table [x=krylov_dim, y=dg_error_solver, col sep=comma] {deal_II_results/monoenergetic_benchmark/gmres/c9/a1_l1/monoenergetic_apost_gmres_2D_errors_p1_n4};
			\addplot+[mark=diamond*, thick, dashed, black, mark options={black, solid}] table [x=krylov_dim, y=solver_apost_estimate, col sep=comma] {deal_II_results/monoenergetic_benchmark/gmres/c9/a1_l1/monoenergetic_apost_gmres_2D_errors_p1_n4}; 
					   
		% (ref,p) = (4,2).
		\nextgroupplot[xmode=linear, xmin=0, xmax=12, xlabel=Iteration,
					   ymode=log, log basis y={10}, ymin=1e-4, ymax=1e2, ylabel=,
					   axis background/.style={fill=gray!0}, 
					   legend pos=north west,
					   grid=both, grid style={line width=.1pt, draw=gray!10}, major grid style={line width=.2pt,draw=gray!50}]
			
			% source_iteration plots.
			\addplot+[mark=o, thick, solid, blue, mark options={blue, solid}] table [x=krylov_dim, y=dg_error_solver, col sep=comma] {deal_II_results/monoenergetic_benchmark/source_iteration/c9/a1_l1/monoenergetic_apost_si_2D_errors_p2_n4};
			\addplot+[mark=*, thick, dashed, blue, mark options={blue, solid}] table [x=krylov_dim, y=solver_apost_estimate, col sep=comma] {deal_II_results/monoenergetic_benchmark/source_iteration/c9/a1_l1/monoenergetic_apost_si_2D_errors_p2_n4}; 
			
			% modified_source_iteration plots.
			\addplot+[mark=square, thick, solid, red, mark options={red, solid}] table [x=krylov_dim, y=dg_error_solver, col sep=comma] {deal_II_results/monoenergetic_benchmark/modified_source_iteration/c9/a1_l1/monoenergetic_apost_msi_2D_errors_p2_n4};
			\addplot+[mark=square*, thick, dashed, red, mark options={red, solid}] table [x=krylov_dim, y=solver_apost_estimate, col sep=comma] {deal_II_results/monoenergetic_benchmark/modified_source_iteration/c9/a1_l1/monoenergetic_apost_msi_2D_errors_p2_n4}; 
			
			% GMRES plots.
			\addplot+[mark=diamond, thick, solid, black, mark options={black, solid}] table [x=krylov_dim, y=dg_error_solver, col sep=comma] {deal_II_results/monoenergetic_benchmark/gmres/c9/a1_l1/monoenergetic_apost_gmres_2D_errors_p2_n4};
			\addplot+[mark=diamond*, thick, dashed, black, mark options={black, solid}] table [x=krylov_dim, y=solver_apost_estimate, col sep=comma] {deal_II_results/monoenergetic_benchmark/gmres/c9/a1_l1/monoenergetic_apost_gmres_2D_errors_p2_n4}; 
		
			% Place coordinate for next figure.			
            \coordinate (bot) at (rel axis cs:1,0); % coordinate at bottom of the last plot
	\end{groupplot}
	
	% legend
	\path (top|-current bounding box.north)--
    	coordinate(legendpos)
    	(bot|-current bounding box.north);
	\matrix[
    	matrix of nodes,
    	anchor=south,
    	draw,
    	inner sep=0.2em,
    	draw
  	]at([yshift=1ex]legendpos)
  	{
    	\ref{plots:monoenergetic_benchmark_disc_param_indep_si_err}& SI (error) &[5pt]
    	\ref{plots:monoenergetic_benchmark_disc_param_indep_msi_err}& Generalised SI (error) &[5pt]
    	\ref{plots:monoenergetic_benchmark_disc_param_indep_gmres_err}& GMRES (error) \\
    	\ref{plots:monoenergetic_benchmark_disc_param_indep_si_est}& SI (a post.) &[5pt]
    	\ref{plots:monoenergetic_benchmark_disc_param_indep_msi_est}& Generalised SI (a post.) &[5pt]
    	\ref{plots:monoenergetic_benchmark_disc_param_indep_gmres_est}& GMRES (a post.) \\};
    	
\end{tikzpicture}
\caption{Independence of the convergence of source iteration, generalised source iteration and right-preconditioned GMRES on the discretisation parameters for the model problem posed on the spatial domain $\spacedom=(0,10)^2$ and with $\sigma=10$, $c=\frac{9}{10}$. Columns (left-to-right): $p=0$, $p=1$, $p=2$. Rows (top-to-bottom): $|\mesh|=32$, $|\mesh|=256$, $|\mesh|=2048$, $|\mesh|=16384$.}
\label{fig:mono_benchmark_param_indep}
\end{figure}

Since the results of Theorems \ref{thm:dgfem_si_contraction_apost} and \ref{thm:dgfem_general_si_contraction_apost} are independent of the mesh size parameters and element-wise polynomial degrees, we expect the behaviour of source iteration and its generalised variant to be similarly independent of such parameters. We also expect the right-preconditioned GMRES method to converge at the same rate independently of the discretisation parameters since we are employing preconditioning based on source iteration. This convergence behaviour of all methods is indeed observed in Figure \ref{fig:mono_benchmark_param_indep}.

\subsubsection{Dependence on scattering ratio}

\begin{figure}[h!]
\centering
\begin{tikzpicture}
	\begin{groupplot}[group style={group size= 2 by 3, horizontal sep=2.2cm, vertical sep=1cm}, height=5.2cm, width=5.2cm]
		% c = 0.3 errors
		\nextgroupplot[xmode=linear, xmin=0, xmax=12, xlabel=,
					   ymode=log, log basis y={10}, ymin=1e-10, ymax=1e2, ylabel=Error,
					   axis background/.style={fill=gray!0}, 
					   legend pos=north west,
					   grid=both, grid style={line width=.1pt, draw=gray!10}, major grid style={line width=.2pt,draw=gray!50}]
			
			% source_iteration plots.
			\addplot+[mark=o, thick, solid, blue, mark options={blue, solid}] table [x=krylov_dim, y=dg_error_solver, col sep=comma] {deal_II_results/monoenergetic_benchmark/source_iteration/c3/a1_l1/monoenergetic_apost_si_2D_errors_p2_n4};
			\label{plots:monoenergetic_benchmark_disc_scat_ratio_dep_si_err}
			\addplot+[mark=*, thick, dashed, blue, mark options={blue, solid}] table [x=krylov_dim, y=solver_apost_estimate, col sep=comma] {deal_II_results/monoenergetic_benchmark/source_iteration/c3/a1_l1/monoenergetic_apost_si_2D_errors_p2_n4}; 
			\label{plots:monoenergetic_benchmark_disc_scat_ratio_dep_si_est}
			
			% modified_source_iteration plots.
			\addplot+[mark=square, thick, solid, red, mark options={red, solid}] table [x=krylov_dim, y=dg_error_solver, col sep=comma] {deal_II_results/monoenergetic_benchmark/modified_source_iteration/c3/a1_l1/monoenergetic_apost_msi_2D_errors_p2_n4};
			\label{plots:monoenergetic_benchmark_disc_scat_ratio_dep_msi_err}
			\addplot+[mark=square*, thick, dashed, red, mark options={red, solid}] table [x=krylov_dim, y=solver_apost_estimate, col sep=comma] {deal_II_results/monoenergetic_benchmark/modified_source_iteration/c3/a1_l1/monoenergetic_apost_msi_2D_errors_p2_n4}; 
			\label{plots:monoenergetic_benchmark_disc_scat_ratio_dep_msi_est}
			
			% GMRES plots.
			\addplot+[mark=diamond, thick, solid, black, mark options={black, solid}] table [x=krylov_dim, y=dg_error_solver, col sep=comma] {deal_II_results/monoenergetic_benchmark/gmres/c3/a1_l1/monoenergetic_apost_gmres_2D_errors_p2_n4};
			\label{plots:monoenergetic_benchmark_disc_scat_ratio_dep_gmres_err}
			\addplot+[mark=diamond*, thick, dashed, black, mark options={black, solid}] table [x=krylov_dim, y=solver_apost_estimate, col sep=comma] {deal_II_results/monoenergetic_benchmark/gmres/c3/a1_l1/monoenergetic_apost_gmres_2D_errors_p2_n4}; 
			\label{plots:monoenergetic_benchmark_disc_scat_ratio_dep_gmres_est}

			% Place coordinate for first figure.			
			\coordinate (top) at (rel axis cs:0,1); % coordinate at top of the first plot
		
		% c = 0.3 effectivities.
		\nextgroupplot[xmode=linear, xmin=0, xmax=12, xlabel=,
					   ymode=linear, log basis y={10}, ymin=0, ymax=10, ylabel=Effectivity,
					   axis background/.style={fill=gray!0}, 
					   legend pos=north west,
					   grid=both, grid style={line width=.1pt, draw=gray!10}, major grid style={line width=.2pt,draw=gray!50}]
			
			% source_iteration plots.
			\addplot+[mark=o, thick, solid, blue, mark options={blue, solid}] table [x=krylov_dim, y expr=\thisrowno{4}/\thisrowno{3}, col sep=comma] {deal_II_results/monoenergetic_benchmark/source_iteration/c3/a1_l1/monoenergetic_apost_si_2D_errors_p2_n4};
			
			% modified_source_iteration plots.
			\addplot+[mark=square, thick, solid, red, mark options={red, solid}] table [x=krylov_dim, y expr=\thisrowno{4}/\thisrowno{3}, col sep=comma] {deal_II_results/monoenergetic_benchmark/modified_source_iteration/c3/a1_l1/monoenergetic_apost_msi_2D_errors_p2_n4};
			
			% GMRES plots.
			\addplot+[mark=diamond, thick, solid, black, mark options={black, solid}] table [x=krylov_dim, y expr=\thisrowno{4}/\thisrowno{3}, col sep=comma] {deal_II_results/monoenergetic_benchmark/gmres/c3/a1_l1/monoenergetic_apost_gmres_2D_errors_p2_n4};
		
		% c = 0.5 errors.
		\nextgroupplot[xmode=linear, xmin=0, xmax=12, xlabel=,
					   ymode=log, log basis y={10}, ymin=1e-10, ymax=1e2, ylabel=Error,
					   axis background/.style={fill=gray!0}, 
					   legend pos=north west,
					   grid=both, grid style={line width=.1pt, draw=gray!10}, major grid style={line width=.2pt,draw=gray!50}]
			
			% source_iteration plots.
			\addplot+[mark=o, thick, solid, blue, mark options={blue, solid}] table [x=krylov_dim, y=dg_error_solver, col sep=comma] {deal_II_results/monoenergetic_benchmark/source_iteration/c5/a1_l1/monoenergetic_apost_si_2D_errors_p2_n4};
			\addplot+[mark=*, thick, dashed, blue, mark options={blue, solid}] table [x=krylov_dim, y=solver_apost_estimate, col sep=comma] {deal_II_results/monoenergetic_benchmark/source_iteration/c5/a1_l1/monoenergetic_apost_si_2D_errors_p2_n4}; 
			
			% modified_source_iteration plots.
			\addplot+[mark=square, thick, solid, red, mark options={red, solid}] table [x=krylov_dim, y=dg_error_solver, col sep=comma] {deal_II_results/monoenergetic_benchmark/modified_source_iteration/c5/a1_l1/monoenergetic_apost_msi_2D_errors_p2_n4};
			\addplot+[mark=square*, thick, dashed, red, mark options={red, solid}] table [x=krylov_dim, y=solver_apost_estimate, col sep=comma] {deal_II_results/monoenergetic_benchmark/modified_source_iteration/c5/a1_l1/monoenergetic_apost_msi_2D_errors_p2_n4}; 
			
			% GMRES plots.
			\addplot+[mark=diamond, thick, solid, black, mark options={black, solid}] table [x=krylov_dim, y=dg_error_solver, col sep=comma] {deal_II_results/monoenergetic_benchmark/gmres/c5/a1_l1/monoenergetic_apost_gmres_2D_errors_p2_n4};
			\addplot+[mark=diamond*, thick, dashed, black, mark options={black, solid}] table [x=krylov_dim, y=solver_apost_estimate, col sep=comma] {deal_II_results/monoenergetic_benchmark/gmres/c5/a1_l1/monoenergetic_apost_gmres_2D_errors_p2_n4}; 
		
		% c = 0.5 effectivities.
		\nextgroupplot[xmode=linear, xmin=0, xmax=12, xlabel=,
					   ymode=linear, log basis y={10}, ymin=0, ymax=10, ylabel=Effectivity,
					   axis background/.style={fill=gray!0}, 
					   legend pos=north west,
					   grid=both, grid style={line width=.1pt, draw=gray!10}, major grid style={line width=.2pt,draw=gray!50}]
			
			% source_iteration plots.
			\addplot+[mark=o, thick, solid, blue, mark options={blue, solid}] table [x=krylov_dim, y expr=\thisrowno{4}/\thisrowno{3}, col sep=comma] {deal_II_results/monoenergetic_benchmark/source_iteration/c5/a1_l1/monoenergetic_apost_si_2D_errors_p2_n4};
			
			% modified_source_iteration plots.
			\addplot+[mark=square, thick, solid, red, mark options={red, solid}] table [x=krylov_dim, y expr=\thisrowno{4}/\thisrowno{3}, col sep=comma] {deal_II_results/monoenergetic_benchmark/modified_source_iteration/c5/a1_l1/monoenergetic_apost_msi_2D_errors_p2_n4};
			
			% GMRES plots.
			\addplot+[mark=diamond, thick, solid, black, mark options={black, solid}] table [x=krylov_dim, y expr=\thisrowno{4}/\thisrowno{3}, col sep=comma] {deal_II_results/monoenergetic_benchmark/gmres/c5/a1_l1/monoenergetic_apost_gmres_2D_errors_p2_n4};
		
		% c = 0.7 errors.
		\nextgroupplot[xmode=linear, xmin=0, xmax=12, xlabel=Iteration,
					   ymode=log, log basis y={10}, ymin=1e-10, ymax=1e2, ylabel=Error,
					   axis background/.style={fill=gray!0}, 
					   legend pos=north west,
					   grid=both, grid style={line width=.1pt, draw=gray!10}, major grid style={line width=.2pt,draw=gray!50}]
			
			% source_iteration plots.
			\addplot+[mark=o, thick, solid, blue, mark options={blue, solid}] table [x=krylov_dim, y=dg_error_solver, col sep=comma] {deal_II_results/monoenergetic_benchmark/source_iteration/c7/a1_l1/monoenergetic_apost_si_2D_errors_p2_n4};
			\addplot+[mark=*, thick, dashed, blue, mark options={blue, solid}] table [x=krylov_dim, y=solver_apost_estimate, col sep=comma] {deal_II_results/monoenergetic_benchmark/source_iteration/c7/a1_l1/monoenergetic_apost_si_2D_errors_p2_n4}; 
			
			% modified_source_iteration plots.
			\addplot+[mark=square, thick, solid, red, mark options={red, solid}] table [x=krylov_dim, y=dg_error_solver, col sep=comma] {deal_II_results/monoenergetic_benchmark/modified_source_iteration/c7/a1_l1/monoenergetic_apost_msi_2D_errors_p2_n4};
			\addplot+[mark=square*, thick, dashed, red, mark options={red, solid}] table [x=krylov_dim, y=solver_apost_estimate, col sep=comma] {deal_II_results/monoenergetic_benchmark/modified_source_iteration/c7/a1_l1/monoenergetic_apost_msi_2D_errors_p2_n4}; 
			
			% GMRES plots.
			\addplot+[mark=diamond, thick, solid, black, mark options={black, solid}] table [x=krylov_dim, y=dg_error_solver, col sep=comma] {deal_II_results/monoenergetic_benchmark/gmres/c7/a1_l1/monoenergetic_apost_gmres_2D_errors_p2_n4};
			\addplot+[mark=diamond*, thick, dashed, black, mark options={black, solid}] table [x=krylov_dim, y=solver_apost_estimate, col sep=comma] {deal_II_results/monoenergetic_benchmark/gmres/c7/a1_l1/monoenergetic_apost_gmres_2D_errors_p2_n4}; 
		
		% c = 0.7 effectivities.
		\nextgroupplot[xmode=linear, xmin=0, xmax=12, xlabel=Iteration,
					   ymode=linear, log basis y={10}, ymin=0, ymax=10, ylabel=Effectivity,
					   axis background/.style={fill=gray!0}, 
					   legend pos=north west,
					   grid=both, grid style={line width=.1pt, draw=gray!10}, major grid style={line width=.2pt,draw=gray!50}]
			
			% source_iteration plots.
			\addplot+[mark=o, thick, solid, blue, mark options={blue, solid}] table [x=krylov_dim, y expr=\thisrowno{4}/\thisrowno{3}, col sep=comma] {deal_II_results/monoenergetic_benchmark/source_iteration/c7/a1_l1/monoenergetic_apost_si_2D_errors_p2_n4};
			
			% modified_source_iteration plots.
			\addplot+[mark=square, thick, solid, red, mark options={red, solid}] table [x=krylov_dim, y expr=\thisrowno{4}/\thisrowno{3}, col sep=comma] {deal_II_results/monoenergetic_benchmark/modified_source_iteration/c7/a1_l1/monoenergetic_apost_msi_2D_errors_p2_n4};
			
			% GMRES plots.
			\addplot+[mark=diamond, thick, solid, black, mark options={black, solid}] table [x=krylov_dim, y expr=\thisrowno{4}/\thisrowno{3}, col sep=comma] {deal_II_results/monoenergetic_benchmark/gmres/c7/a1_l1/monoenergetic_apost_gmres_2D_errors_p2_n4};
			
			% Place coordinate for next figure.			
            \coordinate (bot) at (rel axis cs:1,0); % coordinate at bottom of the last plot
	\end{groupplot}
	
    % legend
	\path (top|-current bounding box.north)--
    	coordinate(legendpos)
    	(bot|-current bounding box.north);
	\matrix[
    	matrix of nodes,
    	anchor=south,
    	draw,
    	inner sep=0.2em,
    	draw
  	]at([yshift=1ex]legendpos)
  	{
    	\ref{plots:monoenergetic_benchmark_disc_scat_ratio_dep_si_err}& SI (error) &[5pt]
    	\ref{plots:monoenergetic_benchmark_disc_scat_ratio_dep_msi_err}& Generalised SI (error) &[5pt]
    	\ref{plots:monoenergetic_benchmark_disc_scat_ratio_dep_gmres_err}& GMRES (error) \\
    	\ref{plots:monoenergetic_benchmark_disc_scat_ratio_dep_si_est}& SI (a post.) &[5pt]
    	\ref{plots:monoenergetic_benchmark_disc_scat_ratio_dep_msi_est}& Generalised SI (a post.) &[5pt]
    	\ref{plots:monoenergetic_benchmark_disc_scat_ratio_dep_gmres_est}& GMRES (a post.) \\
     };
\end{tikzpicture}
\caption{Dependence of the convergence of source iteration, generalised source iteration and right-preconditioned GMRES on the scattering ratio $c=\frac{\beta}{\alpha+\beta}$ for the model problem posed on the spatial domain $\spacedom=(0,10)^2$ and with $\sigma=10$. Left column: DGFEM-energy norm solver errors and \emph{a posteriori} error estimates. Right column: \emph{a posteriori} effectivities. Top row: $c=\frac{3}{10}$. Middle row: $c=\frac{5}{10}$. Bottom row: $c=\frac{7}{10}$.}
\label{fig:mono_benchmark_scatter_ratio_dep}
\end{figure}
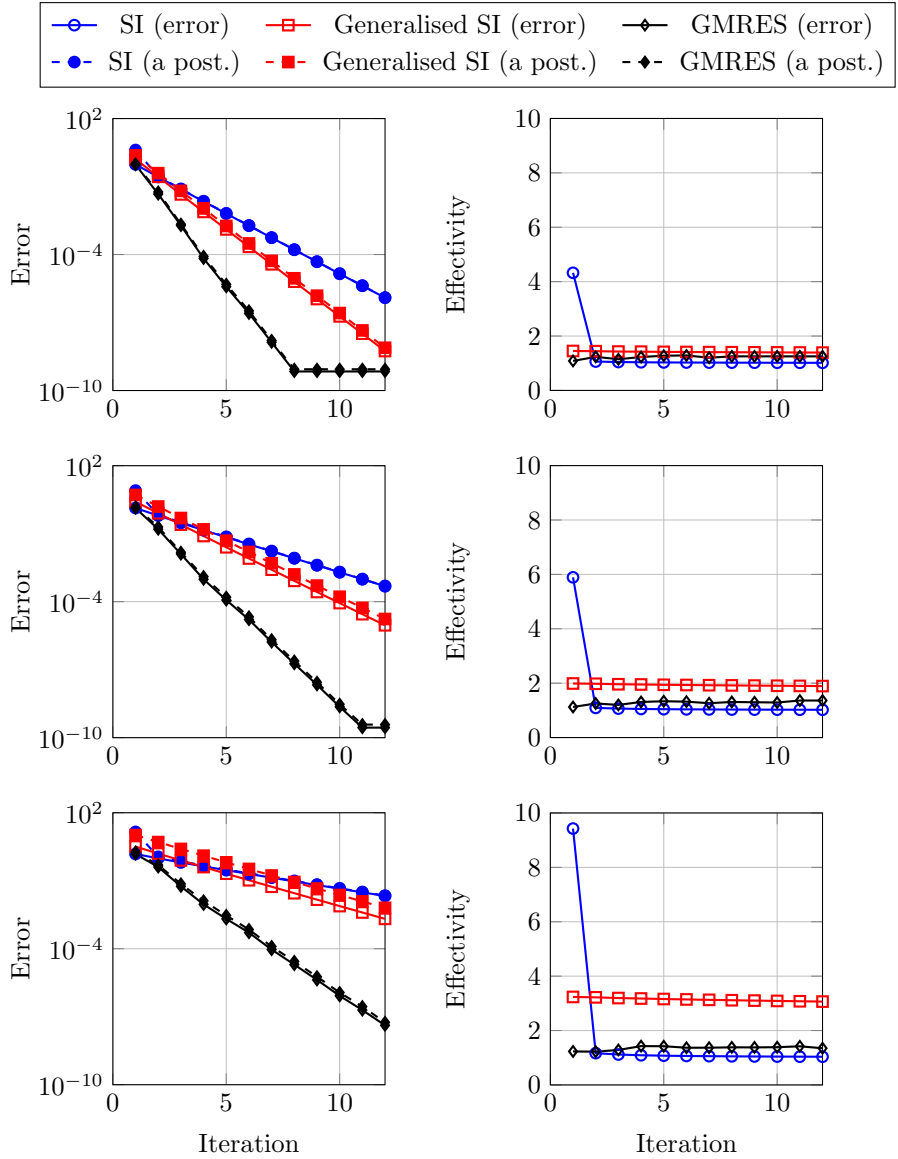

Figure \ref{fig:mono_benchmark_scatter_ratio_dep} shows how the rates of convergence of source iteration and its generalised variant depend on the scattering ratio $c=\frac{\beta}{\alpha+\beta}$. This behaviour is predicted in the statements of Theorems \ref{thm:dgfem_si_contraction_apost} and \ref{thm:dgfem_general_si_contraction_apost}, respectively. Similarly, the dependence on $c$ of the rate of convergence of GMRES is also observed. For each scattering ratio tested, GMRES is seen to converge most rapidly, followed by generalised source iteration and then by standard source iteration. The effectivities of the \emph{a posteriori} solver error estimators employed by all three methods remain close to 1 on almost every iteration, with the effectivity of the generalised source iteration error estimator deteriorating more rapidly as the scattering ratio increases.

\subsubsection{Dependence on optical thickness}

\begin{figure}[h!]
\centering
\begin{tikzpicture}
	\begin{groupplot}[group style={group size= 3 by 3, horizontal sep=1.9cm, vertical sep=1cm}, height=4cm, width=4cm]
		% (sigma,L) = (0.1,0.1)
		\nextgroupplot[xmode=linear, xmin=0, xmax=12, xlabel=,
					   ymode=log, log basis y={10}, ymin=1e-10, ymax=1e2, ylabel=Error,
					   axis background/.style={fill=gray!0}, 
					   legend pos=north west,
					   grid=both, grid style={line width=.1pt, draw=gray!10}, major grid style={line width=.2pt,draw=gray!50}]

			% source_iteration plots.
			\addplot+[mark=o, thick, solid, blue, mark options={blue, solid}] table [x=krylov_dim, y=dg_error_solver, col sep=comma] {deal_II_results/monoenergetic_benchmark/source_iteration/c7/a-1_l-1/monoenergetic_apost_si_2D_errors_p2_n4};
			\label{plots:monoenergetic_benchmark_disc_opt_thick_dep_si_err}
			\addplot+[mark=*, thick, dashed, blue, mark options={blue, solid}] table [x=krylov_dim, y=solver_apost_estimate, col sep=comma] {deal_II_results/monoenergetic_benchmark/source_iteration/c7/a-1_l-1/monoenergetic_apost_si_2D_errors_p2_n4}; 
			\label{plots:monoenergetic_benchmark_disc_opt_thick_dep_si_est}

			% modified_source_iteration plots.
			\addplot+[mark=square, thick, solid, red, mark options={red, solid}] table [x=krylov_dim, y=dg_error_solver, col sep=comma] {deal_II_results/monoenergetic_benchmark/modified_source_iteration/c7/a-1_l-1/monoenergetic_apost_msi_2D_errors_p2_n4};
			\label{plots:monoenergetic_benchmark_disc_opt_thick_dep_msi_err}
			\addplot+[mark=square*, thick, dashed, red, mark options={red, solid}] table [x=krylov_dim, y=solver_apost_estimate, col sep=comma] {deal_II_results/monoenergetic_benchmark/modified_source_iteration/c7/a-1_l-1/monoenergetic_apost_msi_2D_errors_p2_n4}; 
			\label{plots:monoenergetic_benchmark_disc_opt_thick_dep_msi_est}

			% GMRES plots.
			\addplot+[mark=diamond, thick, solid, black, mark options={black, solid}] table [x=krylov_dim, y=dg_error_solver, col sep=comma] {deal_II_results/monoenergetic_benchmark/gmres/c7/a-1_l-1/monoenergetic_apost_gmres_2D_errors_p2_n4};
			\label{plots:monoenergetic_benchmark_disc_opt_thick_dep_gmres_err}
			\addplot+[mark=diamond*, thick, dashed, black, mark options={black, solid}] table [x=krylov_dim, y=solver_apost_estimate, col sep=comma] {deal_II_results/monoenergetic_benchmark/gmres/c7/a-1_l-1/monoenergetic_apost_gmres_2D_errors_p2_n4}; 
			\label{plots:monoenergetic_benchmark_disc_opt_thick_dep_gmres_est}

			% Place coordinate for first figure.			
			\coordinate (top) at (rel axis cs:0,1); % coordinate at top of the first plot
			
		% (sigma,L) = (0.1,1)
		\nextgroupplot[xmode=linear, xmin=0, xmax=12, xlabel=,
					   ymode=log, log basis y={10}, ymin=1e-10, ymax=1e2, ylabel=,
					   axis background/.style={fill=gray!0}, 
					   legend pos=north west,
					   grid=both, grid style={line width=.1pt, draw=gray!10}, major grid style={line width=.2pt,draw=gray!50}]

			% source_iteration plots.
			\addplot+[mark=o, thick, solid, blue, mark options={blue, solid}] table [x=krylov_dim, y=dg_error_solver, col sep=comma] {deal_II_results/monoenergetic_benchmark/source_iteration/c7/a-1_l0/monoenergetic_apost_si_2D_errors_p2_n4};
			\addplot+[mark=*, thick, dashed, blue, mark options={blue, solid}] table [x=krylov_dim, y=solver_apost_estimate, col sep=comma] {deal_II_results/monoenergetic_benchmark/source_iteration/c7/a-1_l0/monoenergetic_apost_si_2D_errors_p2_n4}; 

			% modified_source_iteration plots.
			\addplot+[mark=square, thick, solid, red, mark options={red, solid}] table [x=krylov_dim, y=dg_error_solver, col sep=comma] {deal_II_results/monoenergetic_benchmark/modified_source_iteration/c7/a-1_l0/monoenergetic_apost_msi_2D_errors_p2_n4};
			\addplot+[mark=square*, thick, dashed, red, mark options={red, solid}] table [x=krylov_dim, y=solver_apost_estimate, col sep=comma] {deal_II_results/monoenergetic_benchmark/modified_source_iteration/c7/a-1_l0/monoenergetic_apost_msi_2D_errors_p2_n4}; 

			% GMRES plots.
			\addplot+[mark=diamond, thick, solid, black, mark options={black, solid}] table [x=krylov_dim, y=dg_error_solver, col sep=comma] {deal_II_results/monoenergetic_benchmark/gmres/c7/a-1_l0/monoenergetic_apost_gmres_2D_errors_p2_n4};
			\addplot+[mark=diamond*, thick, dashed, black, mark options={black, solid}] table [x=krylov_dim, y=solver_apost_estimate, col sep=comma] {deal_II_results/monoenergetic_benchmark/gmres/c7/a-1_l0/monoenergetic_apost_gmres_2D_errors_p2_n4}; 
					   
		% (sigma,L) = (0.1,10)
		\nextgroupplot[xmode=linear, xmin=0, xmax=12, xlabel=,
					   ymode=log, log basis y={10}, ymin=1e-10, ymax=1e2, ylabel=,
					   axis background/.style={fill=gray!0}, 
					   legend pos=north west,
					   grid=both, grid style={line width=.1pt, draw=gray!10}, major grid style={line width=.2pt,draw=gray!50}]

			% source_iteration plots.
			\addplot+[mark=o, thick, solid, blue, mark options={blue, solid}] table [x=krylov_dim, y=dg_error_solver, col sep=comma] {deal_II_results/monoenergetic_benchmark/source_iteration/c7/a-1_l1/monoenergetic_apost_si_2D_errors_p2_n4};
			\addplot+[mark=*, thick, dashed, blue, mark options={blue, solid}] table [x=krylov_dim, y=solver_apost_estimate, col sep=comma] {deal_II_results/monoenergetic_benchmark/source_iteration/c7/a-1_l1/monoenergetic_apost_si_2D_errors_p2_n4}; 

			% modified_source_iteration plots.
			\addplot+[mark=square, thick, solid, red, mark options={red, solid}] table [x=krylov_dim, y=dg_error_solver, col sep=comma] {deal_II_results/monoenergetic_benchmark/modified_source_iteration/c7/a-1_l1/monoenergetic_apost_msi_2D_errors_p2_n4};
			\addplot+[mark=square*, thick, dashed, red, mark options={red, solid}] table [x=krylov_dim, y=solver_apost_estimate, col sep=comma] {deal_II_results/monoenergetic_benchmark/modified_source_iteration/c7/a-1_l1/monoenergetic_apost_msi_2D_errors_p2_n4}; 

			% GMRES plots.
			\addplot+[mark=diamond, thick, solid, black, mark options={black, solid}] table [x=krylov_dim, y=dg_error_solver, col sep=comma] {deal_II_results/monoenergetic_benchmark/gmres/c7/a-1_l1/monoenergetic_apost_gmres_2D_errors_p2_n4};
			\addplot+[mark=diamond*, thick, dashed, black, mark options={black, solid}] table [x=krylov_dim, y=solver_apost_estimate, col sep=comma] {deal_II_results/monoenergetic_benchmark/gmres/c7/a-1_l1/monoenergetic_apost_gmres_2D_errors_p2_n4}; 
			
		% (sigma,L) = (1,0.1)
		\nextgroupplot[xmode=linear, xmin=0, xmax=12, xlabel=,
					   ymode=log, log basis y={10}, ymin=1e-10, ymax=1e2, ylabel=Error,
					   axis background/.style={fill=gray!0}, 
					   legend pos=north west,
					   grid=both, grid style={line width=.1pt, draw=gray!10}, major grid style={line width=.2pt,draw=gray!50}]

			% source_iteration plots.
			\addplot+[mark=o, thick, solid, blue, mark options={blue, solid}] table [x=krylov_dim, y=dg_error_solver, col sep=comma] {deal_II_results/monoenergetic_benchmark/source_iteration/c7/a0_l-1/monoenergetic_apost_si_2D_errors_p2_n4};
			\addplot+[mark=*, thick, dashed, blue, mark options={blue, solid}] table [x=krylov_dim, y=solver_apost_estimate, col sep=comma] {deal_II_results/monoenergetic_benchmark/source_iteration/c7/a0_l-1/monoenergetic_apost_si_2D_errors_p2_n4}; 

			% modified_source_iteration plots.
			\addplot+[mark=square, thick, solid, red, mark options={red, solid}] table [x=krylov_dim, y=dg_error_solver, col sep=comma] {deal_II_results/monoenergetic_benchmark/modified_source_iteration/c7/a0_l-1/monoenergetic_apost_msi_2D_errors_p2_n4};
			\addplot+[mark=square*, thick, dashed, red, mark options={red, solid}] table [x=krylov_dim, y=solver_apost_estimate, col sep=comma] {deal_II_results/monoenergetic_benchmark/modified_source_iteration/c7/a0_l-1/monoenergetic_apost_msi_2D_errors_p2_n4}; 

			% GMRES plots.
			\addplot+[mark=diamond, thick, solid, black, mark options={black, solid}] table [x=krylov_dim, y=dg_error_solver, col sep=comma] {deal_II_results/monoenergetic_benchmark/gmres/c7/a0_l-1/monoenergetic_apost_gmres_2D_errors_p2_n4};
			\addplot+[mark=diamond*, thick, dashed, black, mark options={black, solid}] table [x=krylov_dim, y=solver_apost_estimate, col sep=comma] {deal_II_results/monoenergetic_benchmark/gmres/c7/a0_l-1/monoenergetic_apost_gmres_2D_errors_p2_n4}; 
			
		% (sigma,L) = (1,1)
		\nextgroupplot[xmode=linear, xmin=0, xmax=12, xlabel=,
					   ymode=log, log basis y={10}, ymin=1e-10, ymax=1e2, ylabel=,
					   axis background/.style={fill=gray!0}, 
					   legend pos=north west,
					   grid=both, grid style={line width=.1pt, draw=gray!10}, major grid style={line width=.2pt,draw=gray!50}]

			% source_iteration plots.
			\addplot+[mark=o, thick, solid, blue, mark options={blue, solid}] table [x=krylov_dim, y=dg_error_solver, col sep=comma] {deal_II_results/monoenergetic_benchmark/source_iteration/c7/a0_l0/monoenergetic_apost_si_2D_errors_p2_n4};
			\addplot+[mark=*, thick, dashed, blue, mark options={blue, solid}] table [x=krylov_dim, y=solver_apost_estimate, col sep=comma] {deal_II_results/monoenergetic_benchmark/source_iteration/c7/a0_l0/monoenergetic_apost_si_2D_errors_p2_n4}; 

			% modified_source_iteration plots.
			\addplot+[mark=square, thick, solid, red, mark options={red, solid}] table [x=krylov_dim, y=dg_error_solver, col sep=comma] {deal_II_results/monoenergetic_benchmark/modified_source_iteration/c7/a0_l0/monoenergetic_apost_msi_2D_errors_p2_n4};
			\addplot+[mark=square*, thick, dashed, red, mark options={red, solid}] table [x=krylov_dim, y=solver_apost_estimate, col sep=comma] {deal_II_results/monoenergetic_benchmark/modified_source_iteration/c7/a0_l0/monoenergetic_apost_msi_2D_errors_p2_n4}; 

			% GMRES plots.
			\addplot+[mark=diamond, thick, solid, black, mark options={black, solid}] table [x=krylov_dim, y=dg_error_solver, col sep=comma] {deal_II_results/monoenergetic_benchmark/gmres/c7/a0_l0/monoenergetic_apost_gmres_2D_errors_p2_n4};
			\addplot+[mark=diamond*, thick, dashed, black, mark options={black, solid}] table [x=krylov_dim, y=solver_apost_estimate, col sep=comma] {deal_II_results/monoenergetic_benchmark/gmres/c7/a0_l0/monoenergetic_apost_gmres_2D_errors_p2_n4}; 
					   
		% (sigma,L) = (1,10)
		\nextgroupplot[xmode=linear, xmin=0, xmax=12, xlabel=,
					   ymode=log, log basis y={10}, ymin=1e-10, ymax=1e2, ylabel=,
					   axis background/.style={fill=gray!0}, 
					   legend pos=north west,
					   grid=both, grid style={line width=.1pt, draw=gray!10}, major grid style={line width=.2pt,draw=gray!50}]

			% source_iteration plots.
			\addplot+[mark=o, thick, solid, blue, mark options={blue, solid}] table [x=krylov_dim, y=dg_error_solver, col sep=comma] {deal_II_results/monoenergetic_benchmark/source_iteration/c7/a0_l1/monoenergetic_apost_si_2D_errors_p2_n4};
			\addplot+[mark=*, thick, dashed, blue, mark options={blue, solid}] table [x=krylov_dim, y=solver_apost_estimate, col sep=comma] {deal_II_results/monoenergetic_benchmark/source_iteration/c7/a0_l1/monoenergetic_apost_si_2D_errors_p2_n4}; 

			% modified_source_iteration plots.
			\addplot+[mark=square, thick, solid, red, mark options={red, solid}] table [x=krylov_dim, y=dg_error_solver, col sep=comma] {deal_II_results/monoenergetic_benchmark/modified_source_iteration/c7/a0_l1/monoenergetic_apost_msi_2D_errors_p2_n4};
			\addplot+[mark=square*, thick, dashed, red, mark options={red, solid}] table [x=krylov_dim, y=solver_apost_estimate, col sep=comma] {deal_II_results/monoenergetic_benchmark/modified_source_iteration/c7/a0_l1/monoenergetic_apost_msi_2D_errors_p2_n4}; 

			% GMRES plots.
			\addplot+[mark=diamond, thick, solid, black, mark options={black, solid}] table [x=krylov_dim, y=dg_error_solver, col sep=comma] {deal_II_results/monoenergetic_benchmark/gmres/c7/a0_l1/monoenergetic_apost_gmres_2D_errors_p2_n4};
			\addplot+[mark=diamond*, thick, dashed, black, mark options={black, solid}] table [x=krylov_dim, y=solver_apost_estimate, col sep=comma] {deal_II_results/monoenergetic_benchmark/gmres/c7/a0_l1/monoenergetic_apost_gmres_2D_errors_p2_n4}; 
			
		% (sigma,L) = (10,0.1)
		\nextgroupplot[xmode=linear, xmin=0, xmax=12, xlabel=Iteration,
					   ymode=log, log basis y={10}, ymin=1e-10, ymax=1e2, ylabel=Error,
					   axis background/.style={fill=gray!0}, 
					   legend pos=north west,
					   grid=both, grid style={line width=.1pt, draw=gray!10}, major grid style={line width=.2pt,draw=gray!50}]

			% source_iteration plots.
			\addplot+[mark=o, thick, solid, blue, mark options={blue, solid}] table [x=krylov_dim, y=dg_error_solver, col sep=comma] {deal_II_results/monoenergetic_benchmark/source_iteration/c7/a1_l-1/monoenergetic_apost_si_2D_errors_p2_n4};
			\addplot+[mark=*, thick, dashed, blue, mark options={blue, solid}] table [x=krylov_dim, y=solver_apost_estimate, col sep=comma] {deal_II_results/monoenergetic_benchmark/source_iteration/c7/a1_l-1/monoenergetic_apost_si_2D_errors_p2_n4}; 

			% modified_source_iteration plots.
			\addplot+[mark=square, thick, solid, red, mark options={red, solid}] table [x=krylov_dim, y=dg_error_solver, col sep=comma] {deal_II_results/monoenergetic_benchmark/modified_source_iteration/c7/a1_l-1/monoenergetic_apost_msi_2D_errors_p2_n4};
			\addplot+[mark=square*, thick, dashed, red, mark options={red, solid}] table [x=krylov_dim, y=solver_apost_estimate, col sep=comma] {deal_II_results/monoenergetic_benchmark/modified_source_iteration/c7/a1_l-1/monoenergetic_apost_msi_2D_errors_p2_n4}; 

			% GMRES plots.
			\addplot+[mark=diamond, thick, solid, black, mark options={black, solid}] table [x=krylov_dim, y=dg_error_solver, col sep=comma] {deal_II_results/monoenergetic_benchmark/gmres/c7/a1_l-1/monoenergetic_apost_gmres_2D_errors_p2_n4};
			\addplot+[mark=diamond*, thick, dashed, black, mark options={black, solid}] table [x=krylov_dim, y=solver_apost_estimate, col sep=comma] {deal_II_results/monoenergetic_benchmark/gmres/c7/a1_l-1/monoenergetic_apost_gmres_2D_errors_p2_n4}; 
			
		% (sigma,L) = (10,1)
		\nextgroupplot[xmode=linear, xmin=0, xmax=12, xlabel=Iteration,
					   ymode=log, log basis y={10}, ymin=1e-10, ymax=1e2, ylabel=,
					   axis background/.style={fill=gray!0}, 
					   legend pos=north west,
					   grid=both, grid style={line width=.1pt, draw=gray!10}, major grid style={line width=.2pt,draw=gray!50}]

			% source_iteration plots.
			\addplot+[mark=o, thick, solid, blue, mark options={blue, solid}] table [x=krylov_dim, y=dg_error_solver, col sep=comma] {deal_II_results/monoenergetic_benchmark/source_iteration/c7/a1_l0/monoenergetic_apost_si_2D_errors_p2_n4};
			\addplot+[mark=*, thick, dashed, blue, mark options={blue, solid}] table [x=krylov_dim, y=solver_apost_estimate, col sep=comma] {deal_II_results/monoenergetic_benchmark/source_iteration/c7/a1_l0/monoenergetic_apost_si_2D_errors_p2_n4}; 

			% modified_source_iteration plots.
			\addplot+[mark=square, thick, solid, red, mark options={red, solid}] table [x=krylov_dim, y=dg_error_solver, col sep=comma] {deal_II_results/monoenergetic_benchmark/modified_source_iteration/c7/a1_l0/monoenergetic_apost_msi_2D_errors_p2_n4};
			\addplot+[mark=square*, thick, dashed, red, mark options={red, solid}] table [x=krylov_dim, y=solver_apost_estimate, col sep=comma] {deal_II_results/monoenergetic_benchmark/modified_source_iteration/c7/a1_l0/monoenergetic_apost_msi_2D_errors_p2_n4}; 

			% GMRES plots.
			\addplot+[mark=diamond, thick, solid, black, mark options={black, solid}] table [x=krylov_dim, y=dg_error_solver, col sep=comma] {deal_II_results/monoenergetic_benchmark/gmres/c7/a1_l0/monoenergetic_apost_gmres_2D_errors_p2_n4};
			\addplot+[mark=diamond*, thick, dashed, black, mark options={black, solid}] table [x=krylov_dim, y=solver_apost_estimate, col sep=comma] {deal_II_results/monoenergetic_benchmark/gmres/c7/a1_l0/monoenergetic_apost_gmres_2D_errors_p2_n4}; 
					   
		% (sigma,L) = (10,10)
		\nextgroupplot[xmode=linear, xmin=0, xmax=12, xlabel=Iteration,
					   ymode=log, log basis y={10}, ymin=1e-10, ymax=1e2, ylabel=,
					   axis background/.style={fill=gray!0}, 
					   legend pos=north west,
					   grid=both, grid style={line width=.1pt, draw=gray!10}, major grid style={line width=.2pt,draw=gray!50}]

			% source_iteration plots.
			\addplot+[mark=o, thick, solid, blue, mark options={blue, solid}] table [x=krylov_dim, y=dg_error_solver, col sep=comma] {deal_II_results/monoenergetic_benchmark/source_iteration/c7/a1_l1/monoenergetic_apost_si_2D_errors_p2_n4};
			\addplot+[mark=*, thick, dashed, blue, mark options={blue, solid}] table [x=krylov_dim, y=solver_apost_estimate, col sep=comma] {deal_II_results/monoenergetic_benchmark/source_iteration/c7/a1_l1/monoenergetic_apost_si_2D_errors_p2_n4}; 

			% modified_source_iteration plots.
			\addplot+[mark=square, thick, solid, red, mark options={red, solid}] table [x=krylov_dim, y=dg_error_solver, col sep=comma] {deal_II_results/monoenergetic_benchmark/modified_source_iteration/c7/a1_l1/monoenergetic_apost_msi_2D_errors_p2_n4};
			\addplot+[mark=square*, thick, dashed, red, mark options={red, solid}] table [x=krylov_dim, y=solver_apost_estimate, col sep=comma] {deal_II_results/monoenergetic_benchmark/modified_source_iteration/c7/a1_l1/monoenergetic_apost_msi_2D_errors_p2_n4}; 

			% GMRES plots.
			\addplot+[mark=diamond, thick, solid, black, mark options={black, solid}] table [x=krylov_dim, y=dg_error_solver, col sep=comma] {deal_II_results/monoenergetic_benchmark/gmres/c7/a1_l1/monoenergetic_apost_gmres_2D_errors_p2_n4};
			\addplot+[mark=diamond*, thick, dashed, black, mark options={black, solid}] table [x=krylov_dim, y=solver_apost_estimate, col sep=comma] {deal_II_results/monoenergetic_benchmark/gmres/c7/a1_l1/monoenergetic_apost_gmres_2D_errors_p2_n4}; 
			
			% Place coordinate for next figure.			
            \coordinate (bot) at (rel axis cs:1,0); % coordinate at bottom of the last plot
	\end{groupplot}

    % legend
	\path (top|-current bounding box.north)--
    	coordinate(legendpos)
    	(bot|-current bounding box.north);
	\matrix[
    	matrix of nodes,
    	anchor=south,
    	draw,
    	inner sep=0.2em,
    	draw
  	]at([yshift=1ex]legendpos)
  	{
    	\ref{plots:monoenergetic_benchmark_disc_opt_thick_dep_si_err}& SI (error) &[5pt]
    	\ref{plots:monoenergetic_benchmark_disc_opt_thick_dep_msi_err}& Generalised SI (error) &[5pt]
    	\ref{plots:monoenergetic_benchmark_disc_opt_thick_dep_gmres_err}& GMRES (error) \\
    	\ref{plots:monoenergetic_benchmark_disc_opt_thick_dep_si_est}& SI (a post.) &[5pt]
    	\ref{plots:monoenergetic_benchmark_disc_opt_thick_dep_msi_est}& Generalised SI (a post.) &[5pt]
    	\ref{plots:monoenergetic_benchmark_disc_opt_thick_dep_gmres_est}& GMRES (a post.) \\
    }; 
\end{tikzpicture}
\caption{Dependence of the convergence of source iteration, generalised source iteration and right-preconditioned GMRES on the domain size $L\in\{\frac{1}{10},1,10\}$ and macroscopic total cross-section $\sigma\in\{\frac{1}{10},1,10\}$ for the model problem posed on the spatial domain $\spacedom=(0,L)^2$ and employing spatial meshes $\spacemesh$ with $|\spacemesh|=256$. Left column: $L=\frac{1}{10}$. Middle column: $L=1$. Right column: $L=10$. Top row: $\sigma=\frac{1}{10}$. Middle row: $\sigma=1$. Bottom row: $\sigma=10$.}
\label{fig:mono_benchmark_optical_thickness_dep_errs}
\end{figure}
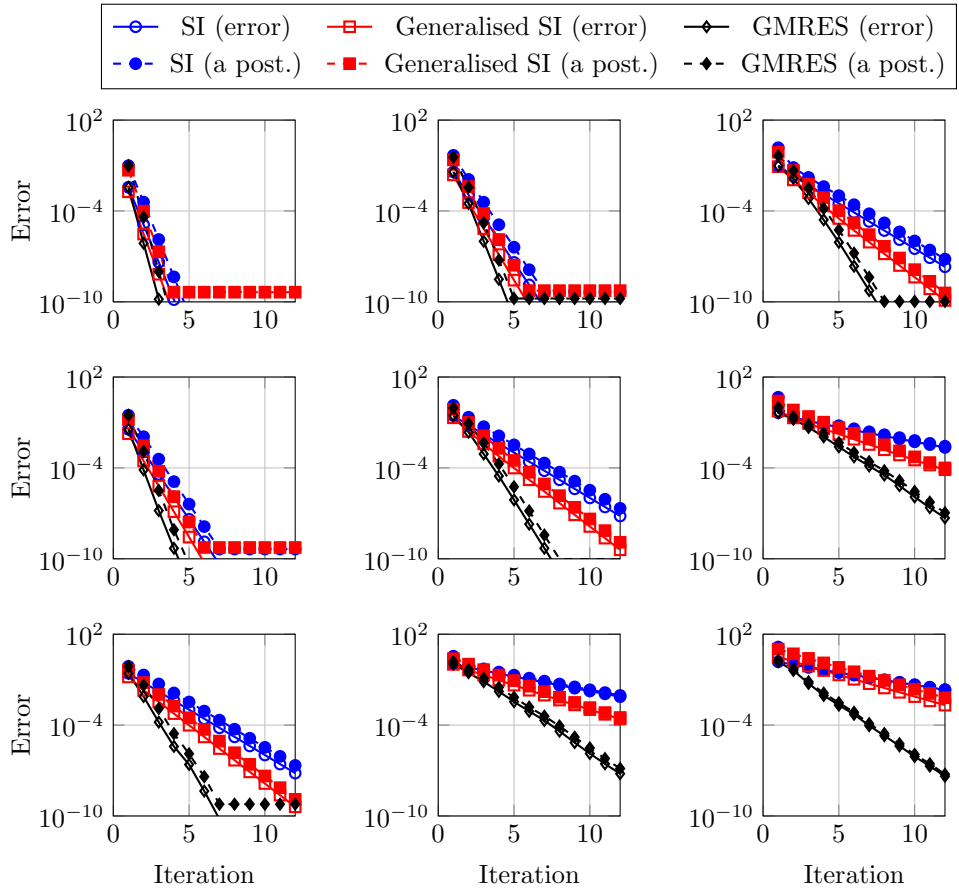

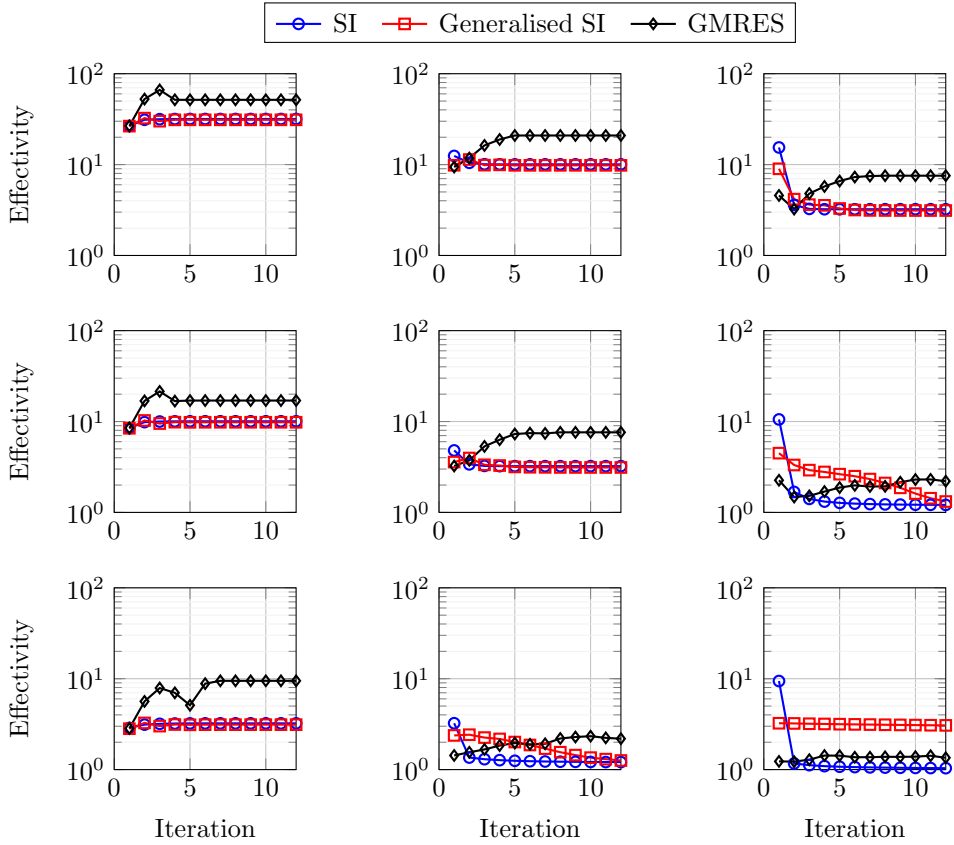
\begin{figure}[h!]
\centering
\begin{tikzpicture}
	\begin{groupplot}[group style={group size= 3 by 3, horizontal sep=1.9cm, vertical sep=1cm}, height=4cm, width=4cm]
		% (sigma,L) = (0.1,0.1)
		\nextgroupplot[xmode=linear, xmin=0, xmax=12, xlabel=,
					   ymode=log, log basis y={10}, ymin=1e0, ymax=1e2, ylabel=Effectivity,
					   axis background/.style={fill=gray!0}, 
					   legend pos=north west,
					   grid=both, grid style={line width=.1pt, draw=gray!10}, major grid style={line width=.2pt,draw=gray!50}]

			% source_iteration plots.
			\addplot+[mark=o, thick, solid, blue, mark options={blue, solid}] table [x=krylov_dim, y expr=\thisrowno{4}/\thisrowno{3}, col sep=comma] {deal_II_results/monoenergetic_benchmark/source_iteration/c7/a-1_l-1/monoenergetic_apost_si_2D_errors_p2_n4};
			\label{plots:monoenergetic_benchmark_disc_opt_thick_dep_si_eff}

			% modified_source_iteration plots.
			\addplot+[mark=square, thick, solid, red, mark options={red, solid}] table [x=krylov_dim, y expr=\thisrowno{4}/\thisrowno{3}, col sep=comma] {deal_II_results/monoenergetic_benchmark/modified_source_iteration/c7/a-1_l-1/monoenergetic_apost_msi_2D_errors_p2_n4};
			\label{plots:monoenergetic_benchmark_disc_opt_thick_dep_msi_eff}

			% GMRES plots.
			\addplot+[mark=diamond, thick, solid, black, mark options={black, solid}] table [x=krylov_dim, y expr=\thisrowno{4}/\thisrowno{3}, col sep=comma] {deal_II_results/monoenergetic_benchmark/gmres/c7/a-1_l-1/monoenergetic_apost_gmres_2D_errors_p2_n4};
			\label{plots:monoenergetic_benchmark_disc_opt_thick_dep_gmres_eff}

			% Place coordinate for first figure.			
			\coordinate (top) at (rel axis cs:0,1); % coordinate at top of the first plot
			
		% (sigma,L) = (0.1,1)
		\nextgroupplot[xmode=linear, xmin=0, xmax=12, xlabel=,
					   ymode=log, log basis y={10}, ymin=1e0, ymax=1e2, ylabel=,
					   axis background/.style={fill=gray!0}, 
					   legend pos=north west,
					   grid=both, grid style={line width=.1pt, draw=gray!10}, major grid style={line width=.2pt,draw=gray!50}]

			% source_iteration plots.
			\addplot+[mark=o, thick, solid, blue, mark options={blue, solid}] table [x=krylov_dim, y expr=\thisrowno{4}/\thisrowno{3}, col sep=comma] {deal_II_results/monoenergetic_benchmark/source_iteration/c7/a-1_l0/monoenergetic_apost_si_2D_errors_p2_n4};

			% modified_source_iteration plots.
			\addplot+[mark=square, thick, solid, red, mark options={red, solid}] table [x=krylov_dim, y expr=\thisrowno{4}/\thisrowno{3}, col sep=comma] {deal_II_results/monoenergetic_benchmark/modified_source_iteration/c7/a-1_l0/monoenergetic_apost_msi_2D_errors_p2_n4};

			% GMRES plots.
			\addplot+[mark=diamond, thick, solid, black, mark options={black, solid}] table [x=krylov_dim, y expr=\thisrowno{4}/\thisrowno{3}, col sep=comma] {deal_II_results/monoenergetic_benchmark/gmres/c7/a-1_l0/monoenergetic_apost_gmres_2D_errors_p2_n4};
					   
		% (sigma,L) = (0.1,10)
		\nextgroupplot[xmode=linear, xmin=0, xmax=12, xlabel=,
					   ymode=log, log basis y={10}, ymin=1e0, ymax=1e2, ylabel=,
					   axis background/.style={fill=gray!0}, 
					   legend pos=north west,
					   grid=both, grid style={line width=.1pt, draw=gray!10}, major grid style={line width=.2pt,draw=gray!50}]

			% source_iteration plots.
			\addplot+[mark=o, thick, solid, blue, mark options={blue, solid}] table [x=krylov_dim, y expr=\thisrowno{4}/\thisrowno{3}, col sep=comma] {deal_II_results/monoenergetic_benchmark/source_iteration/c7/a-1_l1/monoenergetic_apost_si_2D_errors_p2_n4};

			% modified_source_iteration plots.
			\addplot+[mark=square, thick, solid, red, mark options={red, solid}] table [x=krylov_dim, y expr=\thisrowno{4}/\thisrowno{3}, col sep=comma] {deal_II_results/monoenergetic_benchmark/modified_source_iteration/c7/a-1_l1/monoenergetic_apost_msi_2D_errors_p2_n4}; 

			% GMRES plots.
			\addplot+[mark=diamond, thick, solid, black, mark options={black, solid}] table [x=krylov_dim, y expr=\thisrowno{4}/\thisrowno{3}, col sep=comma] {deal_II_results/monoenergetic_benchmark/gmres/c7/a-1_l1/monoenergetic_apost_gmres_2D_errors_p2_n4};
			
		% (sigma,L) = (1,0.1)
		\nextgroupplot[xmode=linear, xmin=0, xmax=12, xlabel=,
					   ymode=log, log basis y={10}, ymin=1e0, ymax=1e2, ylabel=Effectivity,
					   axis background/.style={fill=gray!0}, 
					   legend pos=north west,
					   grid=both, grid style={line width=.1pt, draw=gray!10}, major grid style={line width=.2pt,draw=gray!50}]

			% source_iteration plots.
			\addplot+[mark=o, thick, solid, blue, mark options={blue, solid}] table [x=krylov_dim, y expr=\thisrowno{4}/\thisrowno{3}, col sep=comma] {deal_II_results/monoenergetic_benchmark/source_iteration/c7/a0_l-1/monoenergetic_apost_si_2D_errors_p2_n4};

			% modified_source_iteration plots.
			\addplot+[mark=square, thick, solid, red, mark options={red, solid}] table [x=krylov_dim, y expr=\thisrowno{4}/\thisrowno{3}, col sep=comma] {deal_II_results/monoenergetic_benchmark/modified_source_iteration/c7/a0_l-1/monoenergetic_apost_msi_2D_errors_p2_n4};

			% GMRES plots.
			\addplot+[mark=diamond, thick, solid, black, mark options={black, solid}] table [x=krylov_dim, y expr=\thisrowno{4}/\thisrowno{3}, col sep=comma] {deal_II_results/monoenergetic_benchmark/gmres/c7/a0_l-1/monoenergetic_apost_gmres_2D_errors_p2_n4};
			
		% (sigma,L) = (1,1)
		\nextgroupplot[xmode=linear, xmin=0, xmax=12, xlabel=,
					   ymode=log, log basis y={10}, ymin=1e0, ymax=1e2, ylabel=,
					   axis background/.style={fill=gray!0}, 
					   legend pos=north west,
					   grid=both, grid style={line width=.1pt, draw=gray!10}, major grid style={line width=.2pt,draw=gray!50}]

			% source_iteration plots.
			\addplot+[mark=o, thick, solid, blue, mark options={blue, solid}] table [x=krylov_dim, y expr=\thisrowno{4}/\thisrowno{3}, col sep=comma] {deal_II_results/monoenergetic_benchmark/source_iteration/c7/a0_l0/monoenergetic_apost_si_2D_errors_p2_n4};
			
			% modified_source_iteration plots.
			\addplot+[mark=square, thick, solid, red, mark options={red, solid}] table [x=krylov_dim, y expr=\thisrowno{4}/\thisrowno{3}, col sep=comma] {deal_II_results/monoenergetic_benchmark/modified_source_iteration/c7/a0_l0/monoenergetic_apost_msi_2D_errors_p2_n4};

			% GMRES plots.
			\addplot+[mark=diamond, thick, solid, black, mark options={black, solid}] table [x=krylov_dim, y expr=\thisrowno{4}/\thisrowno{3}, col sep=comma] {deal_II_results/monoenergetic_benchmark/gmres/c7/a0_l0/monoenergetic_apost_gmres_2D_errors_p2_n4};
					   
		% (sigma,L) = (1,10)
		\nextgroupplot[xmode=linear, xmin=0, xmax=12, xlabel=,
					   ymode=log, log basis y={10}, ymin=1e0, ymax=1e2, ylabel=,
					   axis background/.style={fill=gray!0}, 
					   legend pos=north west,
					   grid=both, grid style={line width=.1pt, draw=gray!10}, major grid style={line width=.2pt,draw=gray!50}]

			% source_iteration plots.
			\addplot+[mark=o, thick, solid, blue, mark options={blue, solid}] table [x=krylov_dim, y expr=\thisrowno{4}/\thisrowno{3}, col sep=comma] {deal_II_results/monoenergetic_benchmark/source_iteration/c7/a0_l1/monoenergetic_apost_si_2D_errors_p2_n4};

			% modified_source_iteration plots.
			\addplot+[mark=square, thick, solid, red, mark options={red, solid}] table [x=krylov_dim, y expr=\thisrowno{4}/\thisrowno{3}, col sep=comma] {deal_II_results/monoenergetic_benchmark/modified_source_iteration/c7/a0_l1/monoenergetic_apost_msi_2D_errors_p2_n4};

			% GMRES plots.
			\addplot+[mark=diamond, thick, solid, black, mark options={black, solid}] table [x=krylov_dim, y expr=\thisrowno{4}/\thisrowno{3}, col sep=comma] {deal_II_results/monoenergetic_benchmark/gmres/c7/a0_l1/monoenergetic_apost_gmres_2D_errors_p2_n4};
			
		% (sigma,L) = (10,0.1)
		\nextgroupplot[xmode=linear, xmin=0, xmax=12, xlabel=Iteration,
					   ymode=log, log basis y={10}, ymin=1e0, ymax=1e2, ylabel=Effectivity,
					   axis background/.style={fill=gray!0}, 
					   legend pos=north west,
					   grid=both, grid style={line width=.1pt, draw=gray!10}, major grid style={line width=.2pt,draw=gray!50}]

			% source_iteration plots.
			\addplot+[mark=o, thick, solid, blue, mark options={blue, solid}] table [x=krylov_dim, y expr=\thisrowno{4}/\thisrowno{3}, col sep=comma] {deal_II_results/monoenergetic_benchmark/source_iteration/c7/a1_l-1/monoenergetic_apost_si_2D_errors_p2_n4};

			% modified_source_iteration plots.
			\addplot+[mark=square, thick, solid, red, mark options={red, solid}] table [x=krylov_dim, y expr=\thisrowno{4}/\thisrowno{3}, col sep=comma] {deal_II_results/monoenergetic_benchmark/modified_source_iteration/c7/a1_l-1/monoenergetic_apost_msi_2D_errors_p2_n4};

			% GMRES plots.
			\addplot+[mark=diamond, thick, solid, black, mark options={black, solid}] table [x=krylov_dim, y expr=\thisrowno{4}/\thisrowno{3}, col sep=comma] {deal_II_results/monoenergetic_benchmark/gmres/c7/a1_l-1/monoenergetic_apost_gmres_2D_errors_p2_n4};
			
		% (sigma,L) = (10,1)
		\nextgroupplot[xmode=linear, xmin=0, xmax=12, xlabel=Iteration,
					   ymode=log, log basis y={10}, ymin=1e0, ymax=1e2, ylabel=,
					   axis background/.style={fill=gray!0}, 
					   legend pos=north west,
					   grid=both, grid style={line width=.1pt, draw=gray!10}, major grid style={line width=.2pt,draw=gray!50}]

			% source_iteration plots.
			\addplot+[mark=o, thick, solid, blue, mark options={blue, solid}] table [x=krylov_dim, y expr=\thisrowno{4}/\thisrowno{3}, col sep=comma] {deal_II_results/monoenergetic_benchmark/source_iteration/c7/a1_l0/monoenergetic_apost_si_2D_errors_p2_n4};

			% modified_source_iteration plots.
			\addplot+[mark=square, thick, solid, red, mark options={red, solid}] table [x=krylov_dim, y expr=\thisrowno{4}/\thisrowno{3}, col sep=comma] {deal_II_results/monoenergetic_benchmark/modified_source_iteration/c7/a1_l0/monoenergetic_apost_msi_2D_errors_p2_n4};

			% GMRES plots.
			\addplot+[mark=diamond, thick, solid, black, mark options={black, solid}] table [x=krylov_dim, y expr=\thisrowno{4}/\thisrowno{3}, col sep=comma] {deal_II_results/monoenergetic_benchmark/gmres/c7/a1_l0/monoenergetic_apost_gmres_2D_errors_p2_n4};
					   
		% (sigma,L) = (10,10)
		\nextgroupplot[xmode=linear, xmin=0, xmax=12, xlabel=Iteration,
					   ymode=log, log basis y={10}, ymin=1e0, ymax=1e2, ylabel=,
					   axis background/.style={fill=gray!0}, 
					   legend pos=north west,
					   grid=both, grid style={line width=.1pt, draw=gray!10}, major grid style={line width=.2pt,draw=gray!50}]

			% source_iteration plots.
			\addplot+[mark=o, thick, solid, blue, mark options={blue, solid}] table [x=krylov_dim, y expr=\thisrowno{4}/\thisrowno{3}, col sep=comma] {deal_II_results/monoenergetic_benchmark/source_iteration/c7/a1_l1/monoenergetic_apost_si_2D_errors_p2_n4};

			% modified_source_iteration plots.
			\addplot+[mark=square, thick, solid, red, mark options={red, solid}] table [x=krylov_dim, y expr=\thisrowno{4}/\thisrowno{3}, col sep=comma] {deal_II_results/monoenergetic_benchmark/modified_source_iteration/c7/a1_l1/monoenergetic_apost_msi_2D_errors_p2_n4};

			% GMRES plots.
			\addplot+[mark=diamond, thick, solid, black, mark options={black, solid}] table [x=krylov_dim, y expr=\thisrowno{4}/\thisrowno{3}, col sep=comma] {deal_II_results/monoenergetic_benchmark/gmres/c7/a1_l1/monoenergetic_apost_gmres_2D_errors_p2_n4};
			
			% Place coordinate for next figure.			
            \coordinate (bot) at (rel axis cs:1,0); % coordinate at bottom of the last plot
	\end{groupplot}

    % legend
	\path (top|-current bounding box.north)--
    	coordinate(legendpos)
    	(bot|-current bounding box.north);
	\matrix[
    	matrix of nodes,
    	anchor=south,
    	draw,
    	inner sep=0.2em,
    	draw
  	]at([yshift=1ex]legendpos)
  	{
    	\ref{plots:monoenergetic_benchmark_disc_opt_thick_dep_si_eff}& SI &[5pt]
    	\ref{plots:monoenergetic_benchmark_disc_opt_thick_dep_msi_eff}& Generalised SI &[5pt]
    	\ref{plots:monoenergetic_benchmark_disc_opt_thick_dep_gmres_eff}& GMRES \\
    }; 
\end{tikzpicture}
\caption{Effectivities of the \emph{a posteriori} solver error estimates employed in source iteration, generalised source iteration and right-preconditioned GMRES as shown in Figure \ref{fig:mono_benchmark_optical_thickness_dep_errs}. Left column: $L=\frac{1}{10}$. Middle column: $L=1$. Right column: $L=10$. Top row: $\sigma=\frac{1}{10}$. Middle row: $\sigma=1$. Bottom row: $\sigma=10$.}
\label{fig:mono_benchmark_optical_thickness_dep_effs}
\end{figure}

Figure \ref{fig:mono_benchmark_optical_thickness_dep_errs} shows the convergence of all three solvers as functions of the domain size $L$ and the magnitude of the macroscopic total cross-section $\sigma$ for a problem with fixed scattering ratio $c=\frac{7}{10}$ and fixed number of spatial elements $|\spacefespace|=256$. Introducing the quantity $\sigma L$, Figure \ref{fig:mono_benchmark_optical_thickness_dep_errs} shows that all three solvers converge more rapidly for small values of $\sigma L$ and approach their theoretical convergence rates as $\sigma L\rightarrow\infty$ (given in Theorems \ref{thm:dgfem_si_contraction_apost} and \ref{thm:dgfem_general_si_contraction_apost} for standard and generalised source iteration, respectively).

The behaviour of the \emph{a posteriori} solver error estimates for each solver can similarly be characterised in terms of $\sigma L$. Figure \ref{fig:mono_benchmark_optical_thickness_dep_effs}, shows that the effectivity indices for each solver deteriorate in the limit $\sigma L\rightarrow 0$, and for source iteration and GMRES approach $1$ in the limit $\sigma L\rightarrow \infty$. The behaviour of the effectivity index for the generalised source iteration is more subtle - for small enough values of $\sigma L$, the value of the effectivity index closely mirrors that of standard source iteration, but appears to deteriorate (relative to standard source iteration) as $\sigma L\rightarrow\infty$.

\subsection{Poly-energetic benchmark}

We now consider the application of poly-energetic source iteration and right-preconditioned GMRES for the numerical solution of the DGFEM \reff{eqn:dgfem_lbte_full} for a benchmark problem posed in two spatial dimensions, one angular dimension and one energetic dimension. The spatial domain is taken to be $(0,20)^2$ (in units of cm) and the energetic domain is taken to be $(10,1000)$ (in units of keV). The data terms $\alpha$ and $\theta$ are chosen to mimic the Compton scattering of photons through water. This is achieved by setting $\alpha=0$ and
\begin{equation*}
    \theta(\mathbf{x},\bm{\mu}'\cdot\bm{\mu},E'\rightarrow E) = \rho(\mathbf{x}) \theta_{KN}(E',E,\bm{\mu}'\cdot\bm{\mu}) \delta(F(E',E,\bm{\mu}'\cdot\bm{\mu})),
\end{equation*}

\noindent where $\rho(\mathbf{x})\approx3.34281\times10^{29}$e/m$^3$ denotes the electron density of water and $\theta_{KN}$ denotes the Klein-Nishina differential scattering cross-section per electron \cite{Klein1929} given by
\begin{equation*}
    \theta_{KN}(E',E,\cos\phi) = \frac12 r_e^2 \left( \frac{E}{E'} \right)^2 \left( \frac{E}{E'} + \frac{E'}{E} - \sin^2\phi \right)
\end{equation*}

\noindent with $r_e\approx2.81794$m$^{-15}$ denoting the classical electron radius. Additionally, $\delta$ denotes the Dirac delta distribution and is used to enforce the kinematic constraint $F(E',E,\bm{\mu}'\cdot\bm{\mu}) = 0$ representing conservation of total energy and momentum during a Compton scattering event. Here, $F$ is defined by
\begin{equation*}
    F(E',E,\cos\phi) = E - \frac{E'}{1+\frac{E'}{m_ec^2}(1-\cos\phi)},
\end{equation*}

\noindent where $m_ec^2\approx511$keV denotes the electron rest energy. The macroscopic scattering cross-section $\beta$ is numerically evaluated via \reff{eqn:macro_scatter_cs}. The forcing data $f$ and the boundary data $g_D$ are chosen such that the analytical solution to \reff{eqn:lbte_exact} is given by
\begin{equation*}
    u(\mathbf{x},\bm{\mu},E) = \textnormal{exp} \left( -k\left( \frac{E}{E_{max}} \mathbf{x}\cdot\bm{\mu} \right)^2 \right) \psi\left( \frac{E}{E_{max}} \right),
\end{equation*}

\noindent where $k=0.16$cm$^{-2}$, $E_{max}=1000$keV and $\psi(x)=e^{-\frac{1}{1-x^2}}$ is a mollifier.

We investigate the behaviour of the proposed poly-energetic source iteration and right-preconditioned GMRES solvers for the DGFEM \reff{eqn:dgfem_lbte_full}. A space-angle-energy mesh $\mesh=\spacemesh\times\anglemesh\times\energymesh$ is employed, consisting of $|\spacemesh|=256$ tensor-product spatial elements, $|\anglemesh|=64$ curved angular elements (constructed as in Section \ref{section:discretisation}) and $|\energymesh|=16$ energy groups. Each space-angle-energy element $\kappa=\spacekappa\times\anglekappa\times\kappa_g\in\mesh$ is endowed with a finite element space $\mathbb{P}_p(\spacekappa)\times\mathbb{Q}_q(\anglekappa)\times\mathbb{P}_r(\kappa_g)$ employing a uniform polynomial degrees $p=q=r=2$. The total number of degrees of freedom of the finite element space $\fespace=\spacefespace\otimes\anglefespace\otimes\energyfespace$ is given by $|\fespace|=14,155,776$.

Since Compton scattering only permits energy losses in scattered photons, the DGFEM equations \reff{eqn:dgfem_lbte_full} are solved in the direction of increasing energy groups (and thus in the direction of decreasing energies). With this in mind, the \emph{a posteriori} error estimates of Theorems \ref{thm:dgfem_si_contraction_apost} and \ref{thm:general_solver_apost_est} are applied for each energy group rather than for the full energy domain. Thereby, the error estimates presented in this section are no longer guaranteed upper bounds for the DGFEM-energy norm solver error within each energy group since an additional error is incurred due to the use of inexact group fluxes from higher energy groups as source terms for the lower group problems.

\subsubsection{Fixed number of inner iterations per group}

We first investigate the effect of employing a fixed number of inner iterations per energy group. For $1\le n\le 12$, the group solver is terminated either when the number of inner iterations reaches $n$ or if the \emph{a posteriori} solver error estimate at a prior iteration reaches $10^{-10}$.

\begin{figure}[t!]
\centering
\resizebox{0.6\textwidth}{!}{
	\begin{tikzpicture}
	\begin{axis}[xmode=linear, ymode=log,
				 xlabel=Iteration, ylabel=Error,
				 width=\textwidth,
				 axis background/.style={fill=gray!0}, legend pos=south west,
				 grid=both,
				 grid style={line width=.1pt, draw=gray!10},
				 major grid style={line width=.2pt,draw=gray!50},
				 legend pos=south west]
				 
		% source_iteration plots.
		\addplot+[mark=square, thick, dashed, blue, mark options={blue, solid}] table [x=krylov_dim, y=dg_error_solver, col sep=comma] {deal_II_results/polyenergetic_benchmark/source_iteration/polyenergetic_apost_si_2D_errors};
		\addlegendentry{SI solver error};
		\addplot+[mark=square*, thick, solid, blue, mark options={blue, solid}] table [x=krylov_dim, y=solver_apost_estimate, col sep=comma] {deal_II_results/polyenergetic_benchmark/source_iteration/polyenergetic_apost_si_2D_errors};
		\addlegendentry{SI \emph{a post.} estimate};
		
		% GMRES plots.
		\addplot+[mark=o, thick, dashed, red, mark options={red, solid}] table [x=krylov_dim, y=dg_error_solver, col sep=comma] {deal_II_results/polyenergetic_benchmark/gmres/polyenergetic_apost_gmres_2D_errors};
		\addlegendentry{GMRES solver error};
		\addplot+[mark=*, thick, solid, red, mark options={red, solid}] table [x=krylov_dim, y=solver_apost_estimate, col sep=comma] {deal_II_results/polyenergetic_benchmark/gmres/polyenergetic_apost_gmres_2D_errors};
		\addlegendentry{GMRES \emph{a post.} estimate};
	\end{axis}
	\end{tikzpicture}
}
\caption{DGFEM-energy norm errors and \emph{a posteriori} solver error estimates against number of inner iterations per group.}
\label{fig:poly_benchmark_total_errs_fixed_its}
\end{figure}
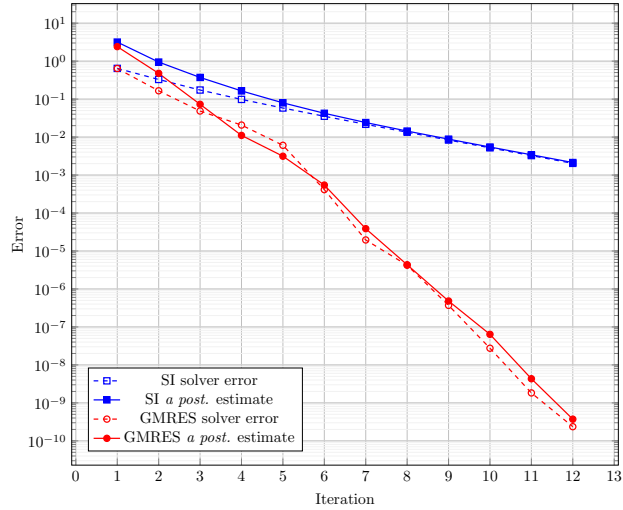

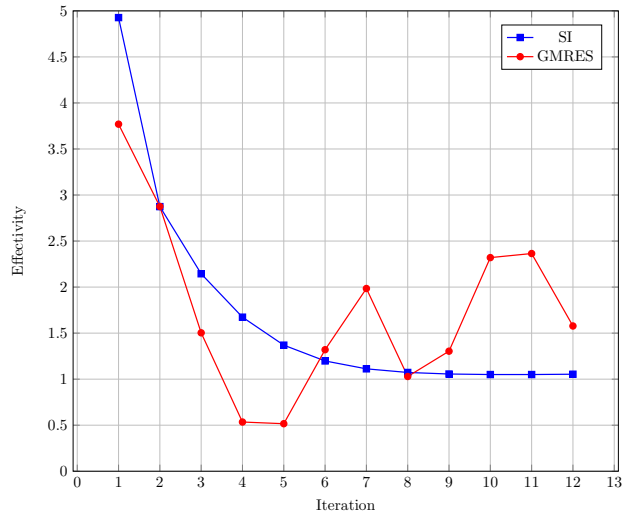
\begin{figure}[t!]
\centering
\resizebox{0.6\textwidth}{!}{
	\begin{tikzpicture}
	\begin{axis}[xmode=linear, ymode=linear,
				 xlabel=Iteration, ylabel=Effectivity,
				 ymin=0, ymax=5,
				 width=\textwidth,
				 axis background/.style={fill=gray!0}, legend pos=south west,
				 grid=both,
				 grid style={line width=.1pt, draw=gray!10},
				 major grid style={line width=.2pt,draw=gray!50},
				 legend pos=north east]
				 
		% source_iteration plot.
		\addplot+[mark=square*, thick, solid, blue, mark options={blue, solid}] table [x=krylov_dim, y expr=\thisrowno{4}/\thisrowno{3}, col sep=comma] {deal_II_results/polyenergetic_benchmark/source_iteration/polyenergetic_apost_si_2D_errors};
		\addlegendentry{SI};
		
		% GMRES plot.
		\addplot+[mark=*, thick, solid, red, mark options={red, solid}] table [x=krylov_dim, y expr=\thisrowno{4}/\thisrowno{3}, col sep=comma] {deal_II_results/polyenergetic_benchmark/gmres/polyenergetic_apost_gmres_2D_errors};
		\addlegendentry{GMRES };
	\end{axis}
	\end{tikzpicture}
}
\caption{Effectivity of \emph{a posteriori} solver error estimates against number of inner iterations per group.}
\label{fig:poly_benchmark_total_effs_fixed_its}
\end{figure}

Figure \ref{fig:poly_benchmark_total_errs_fixed_its} shows the performance of poly-energetic source iteration and right-preconditioned GMRES solver. While the right-preconditioned GMRES method clearly exhibits a faster convergence rate to the true solution as the number of inner iterations increases, the corresponding \emph{a posteriori} solver error estimate occasionally fails to bound the true DGFEM-energy norm error from above.

For both methods, the behaviours of both the DGFEM-energy norm solver error and the \emph{a posteriori} error estimate as the number of inner iterations increases appears to be most heavily influenced by the lowest energy groups. While not presented here, it is observed that both the solver errors and error estimates for higher energy groups decay very quickly with respect to the number of inner iterations.

Figure \ref{fig:poly_benchmark_total_effs_fixed_its} shows the effectivities of both methods. Despite not being a guaranteed upper bound, the effectivity of the \emph{a posteriori} error estimate used in the poly-energetic source iteration method converges to one from above, while that of GMRES varies between each iteration, occasionally dropping below one when a small number of iterations are employed.

\subsubsection{Fixed solver tolerance per group}

Since the \emph{a posteriori} solver error estimates in Theorems \ref{thm:dgfem_si_contraction_apost} and \ref{thm:general_solver_apost_est} can be applied on a per-energy-group basis, we also consider the effect of employing a fixed solver tolerance for each energy group. For a given global solver tolerance $\epsilon$, we terminate each group solver when the \emph{a posteriori} solver error estimate for the current group drops below $\frac{\epsilon}{N_\energydom}$ or if the number of inner iterations reaches 50.

\begin{figure}[t!]
\centering
\resizebox{0.6\textwidth}{!}{
	\begin{tikzpicture}
	\begin{axis}[xmode=log, ymode=log,
				 xlabel=Solver tolerance, ylabel=Error,
				 width=\textwidth,
				 axis background/.style={fill=gray!0}, legend pos=south west,
				 grid=both,
				 grid style={line width=.1pt, draw=gray!10},
				 major grid style={line width=.2pt,draw=gray!50},
				 legend pos=south east]
				 
		% source_iteration plots.
		\addplot+[mark=square, thick, dashed, blue, mark options={blue, solid}] table [x=solver_tolerance, y=dg_error_solver, col sep=comma] {deal_II_results/polyenergetic_benchmark/source_iteration/polyenergetic_apost_tolerance_si_2D_errors};
		\addlegendentry{SI solver error};
		\addplot+[mark=square*, thick, solid, blue, mark options={blue, solid}] table [x=solver_tolerance, y=solver_apost_estimate, col sep=comma] {deal_II_results/polyenergetic_benchmark/source_iteration/polyenergetic_apost_tolerance_si_2D_errors};
		\addlegendentry{SI \emph{a post.} estimate};
		
		% GMRES plots.
		\addplot+[mark=o, thick, dashed, red, mark options={red, solid}] table [x=solver_tolerance, y=dg_error_solver, col sep=comma] {deal_II_results/polyenergetic_benchmark/gmres/polyenergetic_apost_tolerance_rpgmres_2D_errors};
		\addlegendentry{GMRES solver error};
		\addplot+[mark=*, thick, solid, red, mark options={red, solid}] table [x=solver_tolerance, y=solver_apost_estimate, col sep=comma] {deal_II_results/polyenergetic_benchmark/gmres/polyenergetic_apost_tolerance_rpgmres_2D_errors};
		\addlegendentry{GMRES \emph{a post.} estimate};
		
		% Reference plot - the required tolerance for an individual group (assuming that solver error is to be equidistributed amongst all groups).
		\addplot+[mark=none, thick, solid, black] ({1e-10},{1e-10}) -- ({1},{1});
		\addlegendentry{Global stopping tolerance};
	\end{axis}
	\end{tikzpicture}
}
\caption{DGFEM-energy norm errors and \emph{a posteriori} solver error estimates against number of inner iterations per group.}
\label{fig:poly_benchmark_total_errs_fixed_tol}
\end{figure}
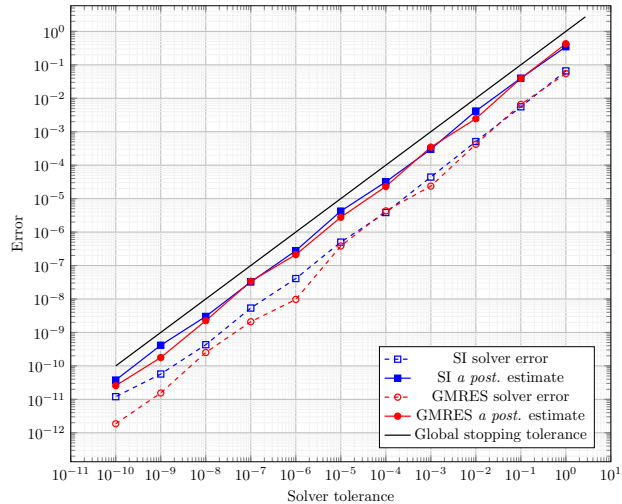

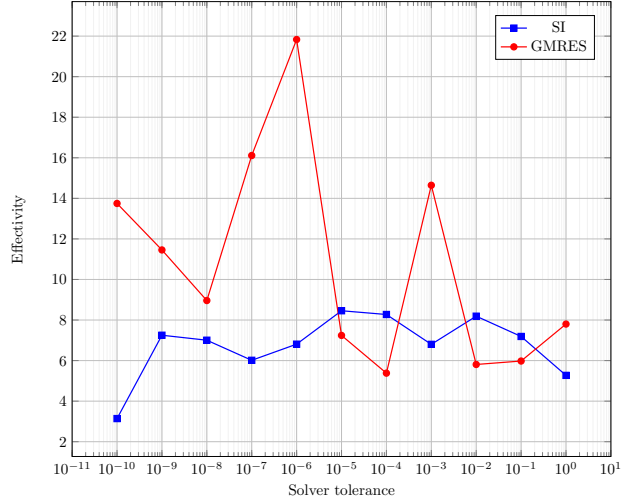
\begin{figure}[t!]
\centering
\resizebox{0.6\textwidth}{!}{
	\begin{tikzpicture}
	\begin{axis}[xmode=log, ymode=linear,
				 xlabel=Solver tolerance, ylabel=Effectivity,
				 width=\textwidth,
				 axis background/.style={fill=gray!0}, legend pos=south west,
				 grid=both,
				 grid style={line width=.1pt, draw=gray!10},
				 major grid style={line width=.2pt,draw=gray!50},
				 legend pos=north east]
				 
		% source_iteration plot.
		\addplot+[mark=square*, thick, solid, blue, mark options={blue, solid}] table [x=solver_tolerance, y expr=\thisrowno{4}/\thisrowno{3}, col sep=comma] {deal_II_results/polyenergetic_benchmark/source_iteration/polyenergetic_apost_tolerance_si_2D_errors};
		\addlegendentry{SI};
		
		% GMRES plot.
		\addplot+[mark=*, thick, solid, red, mark options={red, solid}] table [x=solver_tolerance, y expr=\thisrowno{4}/\thisrowno{3}, col sep=comma] {deal_II_results/polyenergetic_benchmark/gmres/polyenergetic_apost_tolerance_rpgmres_2D_errors};
		\addlegendentry{GMRES };
	\end{axis}
	\end{tikzpicture}
}
\caption{Effectivity of \emph{a posteriori} solver error estimates against solver tolerance per group.}
\label{fig:poly_benchmark_total_effs_fixed_tol}
\end{figure}

Figure \ref{fig:poly_benchmark_total_errs_fixed_tol} shows the performance of poly-energetic source iteration and right-preconditioned GMRES, as well as the group-wise stopping tolerance, for this solution strategy. Here, both methods are able to resolve the solution to the lowest tolerance tested. This is not necessarily true in general - a method may appear to stagnate at a solver error higher than the target solver tolerance if an insufficient number of iterations is employed on any energy group. This is particularly true in the case of poly-energetic source iteration used to resolve the solution at low energy groups; see Table \ref{tab:poly_source_iteration_group_its}.

Figure \ref{fig:poly_benchmark_total_effs_fixed_tol} shows the effectivities of both methods using the fixed inner iteration strategy. While the effectivities of both methods remain above 1, we again stress that the computed \emph{a posteriori} error estimates are not guaranteed upper bounds for the DGFEM-energy norm solver error.

\begin{table}[h!]
\begin{tabular}{c|ccccccccccc}
\multirow{2}{*}{Group}  & \multicolumn{11}{c}{Global solver tolerance} \\ & $10^{0}$ & $10^{-1}$ & $10^{-2}$ & $10^{-3}$ & $10^{-4}$ & $10^{-5}$ & $10^{-6}$ & $10^{-7}$ & $10^{-8}$ & $10^{-9}$ & $10^{-10}$ \\ \hline
1  & 2 & 2   & 3    & 4     & 5      & 5     & 6     & 6     & 7     & 7     & 8     \\
2  & 2 & 2   & 3    & 4     & 5      & 5     & 6     & 6     & 7     & 7     & 8     \\
3  & 2 & 2   & 3    & 4     & 5      & 5     & 6     & 6     & 7     & 8     & 8     \\
4  & 2 & 3   & 3    & 4     & 5      & 5     & 6     & 7     & 7     & 8     & 8     \\
5  & 2 & 3   & 3    & 4     & 5      & 5     & 6     & 7     & 7     & 8     & 8     \\
6  & 2 & 3   & 3    & 4     & 5      & 6     & 6     & 7     & 7     & 8     & 9     \\
7  & 2 & 3   & 4    & 4     & 5      & 6     & 6     & 7     & 8     & 8     & 9     \\
8  & 2 & 3   & 4    & 5     & 5      & 6     & 7     & 7     & 8     & 8     & 9     \\
9  & 2 & 3   & 4    & 5     & 5      & 6     & 7     & 8     & 8     & 9     & 9     \\
10 & 2 & 3   & 4    & 5     & 6      & 6     & 7     & 8     & 9     & 9     & 10    \\
11 & 2 & 3   & 4    & 5     & 6      & 7     & 8     & 8     & 9     & 10    & 10    \\
12 & 2 & 4   & 5    & 6     & 6      & 7     & 8     & 9     & 10    & 10    & 11    \\
13 & 3 & 4   & 5    & 6     & 7      & 8     & 9     & 10    & 10    & 11    & 12    \\
14 & 3 & 5   & 6    & 7     & 8      & 9     & 10    & 11    & 12    & 13    & 14    \\
15 & 4 & 6   & 8    & 9     & 10     & 11    & 12    & 14    & 16    & 17    & 19    \\
16 & 5 & 10  & 15   & 20    & 25     & 29    & 34    & 38    & 43    & 47    & 50    
\end{tabular}
\caption{Table of iteration counts required per energy group for the poly-energetic source iteration \emph{a posteriori} error estimate to drop below the specified group solver tolerance (defined to be the global solver tolerance divided by the number of groups).}
\label{tab:poly_source_iteration_group_its}
\end{table}

\begin{table}[h!]
\begin{tabular}{c|ccccccccccc}
\multirow{2}{*}{Group}  & \multicolumn{11}{c}{Global solver tolerance} \\ & $10^0$ & $10^{-1}$ & $10^{-2}$ & $10^{-3}$ & $10^{-4}$ & $10^{-5}$ & $10^{-6}$ & $10^{-7}$ & $10^{-8}$ & $10^{-9}$ & $10^{-10}$ \\ \hline
1  & 2 & 3   & 4    & 4     & 5      & 5     & 6     & 6     & 7     & 7     & 8     \\
2  & 2 & 3   & 4    & 4     & 5      & 5     & 6     & 6     & 7     & 7     & 8     \\
3  & 2 & 3   & 4    & 4     & 5      & 5     & 6     & 6     & 7     & 7     & 8     \\
4  & 2 & 3   & 4    & 4     & 5      & 5     & 6     & 6     & 7     & 7     & 8     \\
5  & 2 & 3   & 4    & 4     & 5      & 6     & 6     & 6     & 7     & 7     & 8     \\
6  & 2 & 3   & 4    & 5     & 5      & 6     & 6     & 7     & 7     & 8     & 8     \\
7  & 3 & 3   & 4    & 5     & 5      & 6     & 6     & 7     & 7     & 8     & 8     \\
8  & 3 & 3   & 4    & 5     & 5      & 6     & 6     & 7     & 7     & 8     & 8     \\
9  & 3 & 4   & 4    & 5     & 5      & 6     & 6     & 7     & 7     & 8     & 8     \\
10 & 3 & 4   & 4    & 5     & 6      & 6     & 7     & 7     & 8     & 8     & 9     \\
11 & 3 & 4   & 5    & 5     & 6      & 6     & 7     & 7     & 8     & 9     & 9     \\
12 & 3 & 4   & 5    & 5     & 6      & 7     & 7     & 8     & 8     & 9     & 10    \\
13 & 3 & 4   & 5    & 6     & 6      & 7     & 8     & 8     & 9     & 10    & 10    \\
14 & 4 & 4   & 5    & 6     & 7      & 8     & 8     & 9     & 10    & 11    & 11    \\
15 & 4 & 5   & 6    & 7     & 8      & 9     & 10    & 11    & 12    & 13    & 14    \\
16 & 4 & 6   & 7    & 8     & 9      & 10    & 12    & 12    & 13    & 14    & 15
\end{tabular}
\caption{Table of iteration counts required per energy group for the poly-energetic GMRES \emph{a posteriori} error estimate to drop below the specified group solver tolerance (defined to be the global solver tolerance divided by the number of groups).}
\label{tab:poly_gmres_group_its}
\end{table}

It is also useful to compare the number of iterations per energy group taken by poly-energetic source iteration and GMRES to reach the desired group solver tolerance. Tables \ref{tab:poly_source_iteration_group_its} and \ref{tab:poly_gmres_group_its} show the number of iterations per energy group taken by both solvers to reduce the corresponding group's \emph{a posteriori} solver error estimate by $\frac{\varepsilon}{N_\energydom}=\frac{\varepsilon}{16}$, where $\varepsilon$ denotes the global solver tolerance. For high energy groups (indicated by low values of the group index), both methods require few iterations to resolve the solution to a very low tolerance, while low energy groups (indicated by high values of the group index) require many more iterations to achieve the same tolerance. The number of iterations taken by poly-energetic source iteration and GMRES to resolve the solution on a single group to within that group's solver tolerance are roughly comparable for most energy groups, but on the lowest energy group the poly-energetic source iteration requires many more iterations than GMRES to achieve the same group solver tolerance.

Finally, we comment on the storage requirements of poly-energetic source iteration and GMRES. While both methods return a solution vector of length $\dim\fespace=\dim\spacefespace\times\dim\anglefespace\times\dim\energyfespace$, these vectors may be assembled incrementally starting from the highest energy group. Thus, for each energy group $1\le g\le N_\energydom$, both methods return a vector of length $M=\dim\spacefespace\times\dim\anglefespace\times\dim\mathbb{P}_p(\kappa_g)$ corresponding to a block of the full solution vector. The number of vectors of length $M$ required to perform the implementation of poly-energetic source iteration as outlined in Section \ref{section:implementation} remains constant as the number of iterations increases, whereas the storage requirements for GMRES grow linearly with the dimension of the Krylov subspace generated at each iteration. While $M$ may be moderately large for practical applications, Table \ref{tab:poly_gmres_group_its} suggests that, even for the lowest energy group, only a small number of Krylov vectors of length $M$ may be required to achieve a modest global solver tolerance.

\section{Conclusions} \label{section:conclusions}

In this article we have investigated the application of iterative linear solvers for the system of equations arising from the DGFEM discretisation of the linear Boltzmann transport equation. In particular, we presented a generalisation of source iteration and modified source iteration for the poly-energetic and, respectively, mono-energetic settings and showed that the resulting iterative solvers are robustly convergent with respect to the discretisation parameters employed in the DGFEM scheme. Furthermore, we introduced a GMRES solver based on employing source iteration as a preconditioner. For all three approaches computable {\em a posteriori} error bounds on the solver error were deduced. In the case of GMRES, this bound was established through a judicious choice of the left- and right-preconditioners for the original system.

The practical performance of the proposed iterative solvers was studied through a series of numerical experiments. Here we observed that GMRES outperformed source iteration (and modified source iteration for mono-energetic problems) when considering the number of iterations required to attain convergence. This was particularly evident for poly-energetic problems at lower energy groups. In terms of computational costs, we recall that one step of source iteration is comparable to one iteration of GMRES and hence the latter approach is highly advantageous in these lower energy regimes.

As part of our future work, we plan to consider the exploitation of DSA approaches, though their extension to poly-energetic problems requires additional investigation, as well as the use of alternative preconditioning strategies, cf., for example, \cite{patton2002application,badri2019preconditioned,warsa2004krylov}. Furthermore, the linear solver {\em a posteriori} error bounds developed here will be implemented within an adaptive mesh refinement framework whereby both solver and discretisation error are simultaneously controlled.

\section*{Acknowledgements}
PH and MEH acknowledge the financial support of the EPSRC (grant EP/R030707/1).

\bibliographystyle{ieeetr}
\bibliography{references}

\end{document}